\title
[Patterns and polytopes]
{The feasible region for consecutive patterns of permutations is a cycle polytope}
\author[\initial{J.} Borga]{\firstname{Jacopo} \lastname{Borga}}
\address{Institut für Mathematik, Universität Zürich, Winterthurerstr. 190, CH-8057 Zürich, Switzerland}
\email{jacopo.borga@math.uzh.ch}
\urladdr{http://www.jacopoborga.com/}
\author[\initial{R.} Penaguiao]{\firstname{Raul} \lastname{Penaguiao}}
\address{Institut für Mathematik, Universität Zürich, Winterthurerstr. 190, CH-8057 Zürich, Switzerland}
\email{raul.penaguiao@math.uzh.ch}
\thanks{This work was completed with the support of the SNF grants number 200021-172536 and 200020-172515.}
\keywords{Permutation patterns, cycle polytopes, overlap graphs}
\subjclass{52B11,05A05, 60C05}
\def\CCC{\mathcal{C}}  %
\def\SS{\mathcal{S}}   %
\def\NN{\mathbb N}
\def\R{\mathbb{R}}
\def\sc{\mathcal{C}}
\DeclareMathOperator{\occ}{occ}
\DeclareMathOperator{\pat}{pat}
\DeclareMathOperator{\be}{beg}
\DeclareMathOperator{\en}{end}
\DeclareMathOperator{\st}{s}
\DeclareMathOperator{\ar}{a}
\DeclareMathOperator{\Aff}{Aff}
\DeclareMathOperator{\vol}{vol}
\DeclareMathOperator{\conv}{conv}
\DeclareMathOperator{\cnv}{conv} %
\DeclareMathOperator{\dist}{dist} %
\DeclareMathOperator{\lb}{lb} %
\DeclareMathOperator{\std}{std} %
\DeclareMathOperator{\spn}{span} %
\DeclareMathOperator{\coc}{c-occ} %
\DeclareMathOperator{\oc}{occ} %
\def\Z{\mathbb{Z}}
\def\pcoc{\widetilde{\coc}}
\def\poc{\widetilde{\oc}}
\def\JacGraph[#1]{G_J(#1)}
\def\ValGraph[#1]{\mathcal{O}v(#1)}
\def\vecc[#1]{\vec{e}_{\CCC_{#1}}}
\newcounter{indice}
\newcommand{\permutation}[1]{
	\setcounter{indice}{0};
	\foreach \i in {#1}
	\addtocounter{indice}{1};
	
	\addtocounter{indice}{1}
	\draw [help lines] (1,1) grid (\theindice,\theindice);
	
	\setcounter{indice}{1};
	
	\foreach \i in { #1 } {
		\draw (\theindice+.5,\i+.5) [fill] circle (.2);
		\addtocounter{indice}{1};
	}
	\addtocounter{indice}{-1};
	
}
\begin{document}
\newtheorem{observation}[cdrthm]{Observation}
\crefname{Observation}{Observation}{Observations}

\begin{abstract}
We study proportions of consecutive occurrences of permutations of a given size. Specifically, the limit of such proportions on large permutations forms a region, called \emph{feasible region}. We show that this feasible region is a polytope, more precisely the cycle polytope of a specific graph called \emph{overlap graph}. This allows us to compute the dimension, vertices and faces of the polytope, and to determine the equations that define it. 
Finally we prove that the limit of classical occurrences and consecutive occurrences are in some sense independent. As a consequence, the scaling limit of a sequence of permutations induces no constraints on the local limit and vice versa.
\end{abstract}

\maketitle

\begin{figure}[h]
	\centering
	\includegraphics[scale=0.73]{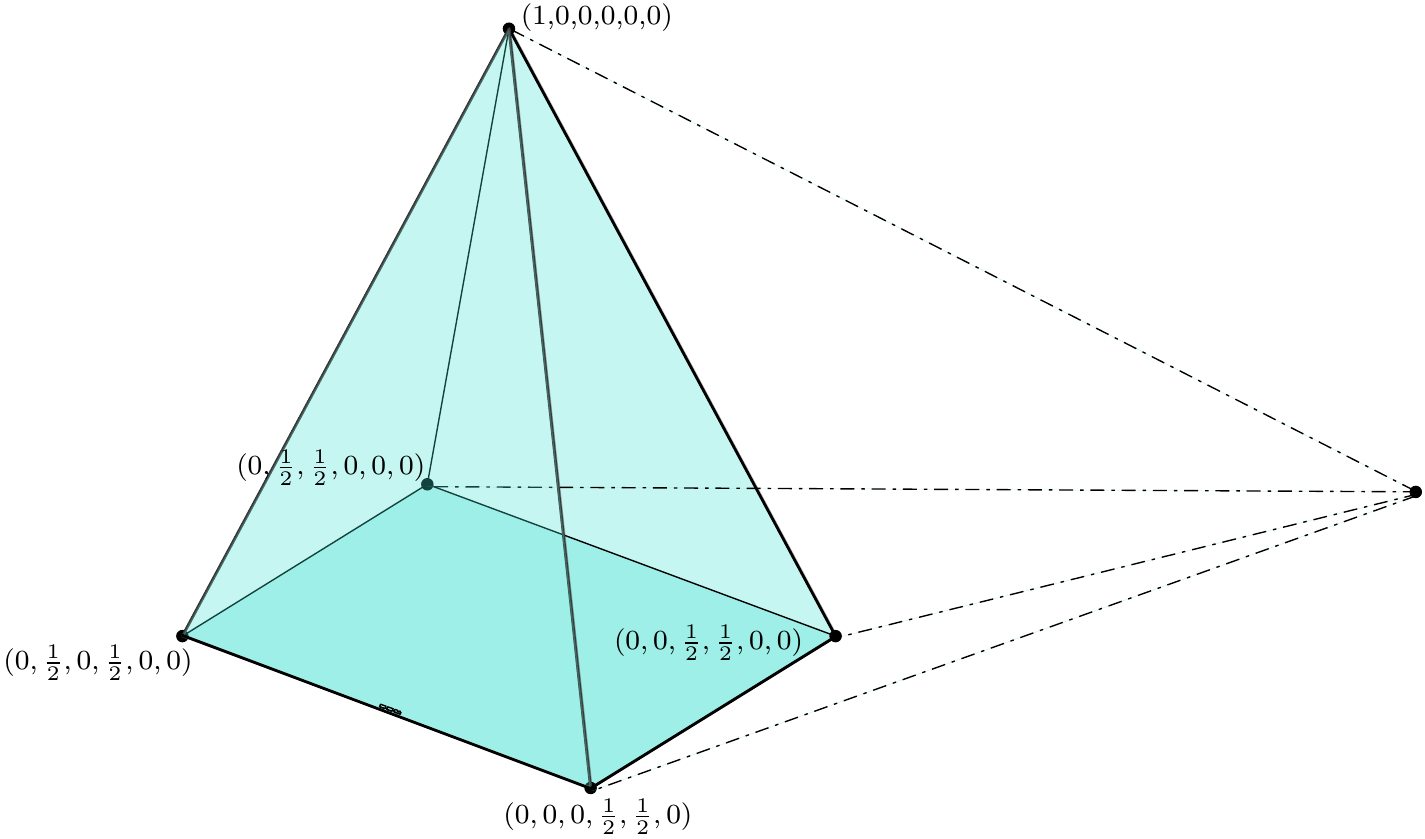}
	\bigskip
	\caption{The four-dimensional polytope $P_3$ given by the six patterns of size three (see \cref{eq:setofinter} for a precise definition). We highlight in light-blue one of the six three-dimensional facets of $P_3$. This facet is a pyramid with square base. The polytope itself is a four-dimensional pyramid, whose base is the highlighted facet. \label{fig:P_3}}
\end{figure}

\newpage

\section{Introduction}

\label{sect:intro}

\subsection{Motivations}

\label{sect:occ and cocc}

Despite this article not containing any probabilistic result, we introduce here some motivations that come from the study of random permutations.
This is a classical topic at the interface of combinatorics and discrete probability theory. There are two main approaches to the topic: the first concerns the study of statistics on permutations, and the second, more recent, looks for limits of sequences of permutations. The two approaches are not orthogonal and many results relate them, for instance Theorem \ref{thm:carac_occ} and Theorem \ref{thm:carac_cocc} below.  

In order to study the shape of permutations, two main notions of convergence have been defined: a global notion of convergence (called permuton convergence) and a local notion of convergence (called Benjamini--Schramm convergence, or BS-convergence, for short).
The notion of permuton limit for permutations has been introduced  in \cite{MR2995721}. 
A permuton is a probability measure on the unit square with uniform marginals, and represents the scaling limit of a sequence of permutations seen as permutation matrices, as the size grows to infinity. The study of permuton limits is an active and exciting research field in combinatorics, see for instance \cite{bassino2017universal,bassino2019scaling,MR3813988,borga2019decorated,borga2019almost, kenyon2015permutations,rahman2016geometry,MR2266895,MR2566888}. 
On the other hand, the notion of BS-limit for permutations is more recent, having been introduced in \cite{borga2018local}. 
Informally, in order to investigate BS-limits, we study the permutation in a neighborhood around a randomly marked point. Limiting objects for this framework are called \emph{infinite rooted permutations} and are in bijection with total orders on the set of integer numbers. 
BS-limits have also been studied in some other works, see for instance \cite{bevan2019permutations,borga2019decorated,borga2019square}.

We denote by $\mathcal{S}_n$ the set of permutations of size $n$, by $\SS$ the space of all permutations, and by $\widetilde{\occ}(\pi,\sigma)$ (resp.\ $\pcoc(\pi,\sigma)$) the proportion of classical occurrences (resp.\ consecutive occurrences) of a permutation $\pi$ in $\sigma$ (see \cref{sect:notation} for notation and basic definitions).
The following theorems provide relevant combinatorial characterizations of the two aforementioned notions of convergence.

\begin{theorem}[\cite{MR2995721}]\label{thm:carac_occ}
	For any $n\in\NN$, let $\sigma^n\in\SS$ and assume that $|\sigma^n|\to\infty$. The sequence $(\sigma^n)_{n\in\NN}$ converges to some limiting permuton $P$ if and only if there exists a vector $(\Delta_{\pi}(P))_{\pi\in\mathcal{S}}$ of non-negative real numbers (that depends on $P$) such that, for all $\pi\in\mathcal{S}$,
	$$\widetilde{\occ}(\pi,\sigma^n)\to\Delta_{\pi}(P).$$
\end{theorem}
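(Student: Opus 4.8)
The plan is to realise both notions --- permuton convergence and the pattern densities $\Delta_\pi(P)$ --- through a single sampling device. To $\sigma\in\mathcal{S}_n$ one associates the permuton $\mu_\sigma$ spreading mass uniformly on the squares $\left[\tfrac{i-1}{n},\tfrac{i}{n}\right]\times\left[\tfrac{\sigma(i)-1}{n},\tfrac{\sigma(i)}{n}\right]$, so that by definition $\sigma^n\to P$ means $\mu_{\sigma^n}\to P$ weakly as Borel probability measures on $[0,1]^2$; and for a permuton $Q$ and $k=|\pi|$ one defines $\Delta_\pi(Q)$ to be the probability that $k$ i.i.d.\ points sampled from $Q$ induce the pattern $\pi$ (sort the points by first coordinate, the induced order of the second coordinates should be $\pi$). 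Since $Q$ has uniform marginals it charges no vertical or horizontal line, so ties have probability $0$, $\Delta_\pi(Q)$ is well defined, and the set $A_\pi\subseteq([0,1]^2)^k$ of configurations inducing $\pi$ has topological boundary inside the $Q^{\otimes k}$-null tie set.

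For the forward direction I would first note that $Q\mapsto\Delta_\pi(Q)=Q^{\otimes k}(A_\pi)$ is continuous for the weak topology: if $Q_n\to Q$ then $Q_n^{\otimes k}\to Q^{\otimes k}$, and since $A_\pi$ is a $Q^{\otimes k}$-continuity set the portmanteau theorem gives $Q_n^{\otimes k}(A_\pi)\to Q^{\otimes k}(A_\pi)$. Then I would compare $\widetilde{\occ}(\pi,\sigma^n)$ with $\Delta_\pi(\mu_{\sigma^n})$: sampling $k$ i.i.d.\ points from $\mu_{\sigma^n}$, with probability at least $1-\binom{k}{2}/|\sigma^n|$ they fall in $k$ distinct cells, and on that event the induced pattern is exactly a uniform size-$k$ sub-pattern of $\sigma^n$, hence equals $\pi$ with probability $\widetilde{\occ}(\pi,\sigma^n)$. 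Thus $\bigl|\widetilde{\occ}(\pi,\sigma^n)-\Delta_\pi(\mu_{\sigma^n})\bigr|=O(1/|\sigma^n|)$, and together with the continuity and $\mu_{\sigma^n}\to P$ this yields $\widetilde{\occ}(\pi,\sigma^n)\to\Delta_\pi(P)$.

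For the reverse direction I would use that the set of permutons is weakly compact and metrizable (it is a weakly closed subset of the probability measures on the compact space $[0,1]^2$). If $\widetilde{\occ}(\pi,\sigma^n)\to\Delta_\pi(P)$ for all $\pi$, let $Q$ be any subsequential weak limit of $(\mu_{\sigma^n})$: it is a permuton, and the forward direction gives $\Delta_\pi(Q)=\lim_n\widetilde{\occ}(\pi,\sigma^n)=\Delta_\pi(P)$ for every $\pi$. The crux --- and the step I expect to be the main obstacle --- is then the uniqueness assertion that a permuton is determined by the family $(\Delta_\pi(\cdot))_{\pi\in\SS}$; granting it, $Q=P$, every subsequential limit is $P$, and therefore $\mu_{\sigma^n}\to P$, i.e.\ $\sigma^n\to P$. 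To establish this uniqueness I would argue by a law of large numbers: sampling $N$ i.i.d.\ points $X_1,\dots,X_N$ from $Q$, Glivenko--Cantelli shows that for fixed $a,b\in[0,1]$, with probability tending to $1$ the fraction of indices $\ell$ with $X_\ell\in[0,a]\times[0,b]$ differs by $o(1)$ from the fraction of $\ell$ whose first-coordinate rank in the sample is at most $\lfloor aN\rfloor$ and whose second-coordinate rank is at most $\lfloor bN\rfloor$ --- and this last fraction is a deterministic function of the pattern induced by $(X_1,\dots,X_N)$. Taking expectations, $Q\bigl([0,a]\times[0,b]\bigr)$ equals, up to $o(1)$ as $N\to\infty$, a fixed linear combination of $\{\Delta_\gamma(Q):|\gamma|=N\}$, so the pattern densities pin down $Q$ on all such rectangles and hence on all Borel sets. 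The only genuinely technical points in this program are the error bookkeeping in these last two estimates.
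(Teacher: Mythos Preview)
The paper does not prove this theorem; it is quoted from \cite{MR2995721} as background (note the citation in the theorem header), so there is no in-paper proof to compare against. Your sketch is essentially the standard argument from that reference: associate to each permutation its empirical permuton, use continuity of $Q\mapsto Q^{\otimes k}(A_\pi)$ via the portmanteau theorem for the forward direction, and compactness of the permuton space together with the fact that a permuton is determined by its pattern densities for the converse. The ingredients and the order in which you deploy them match the original proof, and the only places that need care --- the $O(1/|\sigma^n|)$ coupling between $\widetilde{\occ}(\pi,\sigma^n)$ and $\Delta_\pi(\mu_{\sigma^n})$, and the reconstruction of $Q([0,a]\times[0,b])$ from large-sample pattern statistics --- are exactly the ones you flag.
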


\begin{theorem}[\cite{borga2018local}]\label{thm:carac_cocc}
	For any $n\in\NN$, let $\sigma^n\in\SS$ and assume that $|\sigma^n|\to\infty$. The sequence $(\sigma^n)_{n\in\NN}$ converges in the Benjamini--Schramm topology to some random infinite rooted permutation $\sigma^\infty$ if and only if there exists a vector $(\Gamma_{\pi}(\sigma^\infty))_{\pi\in\mathcal{S}}$ of non-negative real numbers (that depends on $\sigma^\infty$) such that, for all $\pi\in\mathcal{S}$,
	$$\widetilde{\coc}(\pi,\sigma^n)\to\Gamma_{\pi}(\sigma^\infty).$$
\end{theorem}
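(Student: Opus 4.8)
The plan is to translate Benjamini--Schramm convergence into the convergence of ``window marginals'' and to recognize those marginals as the consecutive-occurrence proportions. Recall that an infinite rooted permutation is a total order on $\Z$ with distinguished element $0$, and that the local topology on this space is generated by the maps $r_k$ which send such an order to the pattern it induces on the positions $-k,-k+1,\dots,k$; the family $(r_k)_{k\in\NN}$ is jointly continuous and separates points, so a sequence of random infinite rooted permutations converges in distribution precisely when, for every $k\in\NN$ and every $\tau\in\mathcal S_{2k+1}$, the numbers $\P(r_k=\tau)$ converge (patterns of other sizes are recovered by marginalizing larger symmetric windows). By definition, $(\sigma^n)_n$ BS-converges iff the random rooted permutations obtained by rooting $\sigma^n$ at a uniform position $\mathbf{i}_n$ and recentering converge in distribution. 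The bridge between the two settings is the elementary identity
\[
\P\big(r_k(\sigma^n,\mathbf{i}_n)=\pi\big)=\widetilde{\coc}(\pi,\sigma^n)+O\!\left(\tfrac{k}{|\sigma^n|}\right)\qquad\text{for }\pi\in\mathcal S_{2k+1},
\]
where the error absorbs the roots lying within distance $k$ of an endpoint of $\sigma^n$ (and any discrepancy between normalizing $\coc$ by $|\sigma^n|$ or by the number of length-$(2k+1)$ windows). Since $|\sigma^n|\to\infty$, this error vanishes in the limit.

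With this dictionary the forward implication is immediate: if $(\sigma^n)_n$ BS-converges to $\sigma^\infty$, then for each $\pi$ the left-hand side above converges to $\P\big(r_{(|\pi|-1)/2}(\sigma^\infty)=\pi\big)=:\Gamma_\pi(\sigma^\infty)\ge 0$, hence so does $\widetilde{\coc}(\pi,\sigma^n)$. For the converse, assume $\widetilde{\coc}(\pi,\sigma^n)\to\Gamma_\pi$ for all $\pi$. First I would check that, for each fixed $k$, the vector $(\Gamma_\pi)_{\pi\in\mathcal S_{2k+1}}$ is a probability distribution: non-negativity is clear, and $\sum_{\pi\in\mathcal S_{2k+1}}\widetilde{\coc}(\pi,\sigma^n)\to 1$ because the length-$(2k+1)$ windows of $\sigma^n$ partition into $|\sigma^n|-2k$ slots, each carrying a unique pattern. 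The crucial step is consistency under the window maps: summing $\Gamma_{\pi'}$ over all $\pi'\in\mathcal S_{2k+3}$ whose central length-$(2k+1)$ sub-window has pattern $\pi$ must return $\Gamma_\pi$. This is inherited, in the limit, from the finite-level combinatorial identity obtained by extending each length-$(2k+1)$ consecutive window of $\sigma^n$ by one entry on each side: this turns $\sum_{\pi'}\coc(\pi',\sigma^n)$ into $\coc(\pi,\sigma^n)$ up to a bounded boundary term, which is negligible after dividing by $|\sigma^n|$.

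Granting consistency, Kolmogorov's extension theorem yields a random infinite rooted permutation $\sigma^\infty$ with $\P(r_k(\sigma^\infty)=\pi)=\Gamma_\pi$ for all $k$ and all $\pi$; feeding this back into the dictionary identity gives $\P(r_k(\sigma^n,\mathbf{i}_n)=\pi)\to\P(r_k(\sigma^\infty)=\pi)$ for every $k$ and $\pi$, which is exactly BS-convergence of $(\sigma^n)_n$ to $\sigma^\infty$. Alternatively one can bypass the explicit construction: the space of infinite rooted permutations is compact, so $(\sigma^n,\mathbf{i}_n)_n$ is tight, every subsequential limit has window marginals equal to the $\Gamma_\pi$ by the forward direction, hence all subsequential limits agree in law and the full sequence converges. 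The main obstacle is precisely the converse direction, and within it the consistency/uniqueness point — verifying that the prescribed family of window marginals is coherent and pins down a single law — together with making the $O(k/|\sigma^n|)$ boundary bookkeeping uniform enough to pass to the limit; the remaining arguments are routine unwinding of the definitions.
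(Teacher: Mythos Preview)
The paper does not prove this theorem: it is quoted from \cite{borga2018local} as background, with no argument given here. There is therefore nothing in the present paper to compare your proposal against.

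That said, your outline is the standard route and is essentially correct. One point to tighten: you phrase everything through the symmetric windows $r_k$ of size $2k+1$, and then assert that ``patterns of other sizes are recovered by marginalizing larger symmetric windows''. This is true, but it is where the actual work hides for the converse direction. Convergence of $\widetilde{\coc}(\pi,\sigma^n)$ for \emph{all} $\pi$ (in particular for all odd sizes) gives the window marginals directly; conversely, to deduce convergence of $\widetilde{\coc}(\pi,\sigma^n)$ for a pattern $\pi$ of even size $k$ from the odd-size window marginals, you need the identity
\[
\coc(\pi,\sigma)\;=\;\sum_{\substack{\tau\in\mathcal S_{k+1}\\ \be(\tau)=\pi}}\coc(\tau,\sigma)\;+\;O(1),
\]
(or the analogous identity with $\en$), so that $\Gamma_\pi$ is determined by the $\Gamma_\tau$ with $|\tau|=k+1$. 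Making this explicit also clarifies why your consistency step (extending a window by one entry on each side) really does pin down a unique projective system and hence a unique law via Kolmogorov. The remaining $O(k/|\sigma^n|)$ bookkeeping and the tightness/compactness alternative are routine, as you say.
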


A natural question, motivated by the theorems above, is the following: given a finite family of patterns $\mathcal{A}\subseteq\SS$ and a vector $(\Delta_\pi)_{\pi\in\mathcal{A}}\in [0,1]^{\mathcal A}$, or $(\Gamma_\pi)_{\pi\in\mathcal{A}}\in [0,1]^{\mathcal A}$, does there exist a sequence of permutations $(\sigma^n)_{n\in\NN}$ such that $|\sigma^n|\to\infty$ and
$$\widetilde{\occ}(\pi,\sigma^n)\to\Delta_{\pi}, \quad \text{for all} \quad \pi\in\mathcal{A},$$
or
$$\widetilde{\coc}(\pi,\sigma^n)\to\Gamma_{\pi}, \quad \text{for all} \quad \pi\in\mathcal{A}\;?$$

We consider the classical pattern limiting sets, sometimes called the \textit{feasible region} for (classical) patterns, defined as
\begin{align}\label{eq:setofnotinter}
clP_k \coloneqq&\left\{\vec{v}\in [0,1]^{\SS_k} \big| \exists (\sigma^m)_{m\in\NN} \in \SS^{\NN} \text{ s.t. }|\sigma^m| \to \infty\text{ and } \poc(\pi, \sigma^m ) \to \vec{v}_{\pi},\forall \pi\in\SS_k  \right\}\\
=&\left\{(\Delta_{\pi}(P))_{\pi\in\SS_k} \big| P\text{ is a permuton}  \right\} \, ,\nonumber
\end{align}
and we	 introduce the consecutive pattern limiting sets, called here the \textit{feasible region} for consecutive patterns,
\begin{align}
\label{eq:setofinter}
P_k \coloneqq &\left\{\vec{v}\in [0,1]^{\SS_k} \big| \exists (\sigma^m)_{m\in\NN} \in \SS^{\NN} \text{ s.t. }|\sigma^m| \to
\infty \text{ and }  \pcoc(\pi, \sigma^m ) \to \vec{v}_{\pi}, \forall \pi\in\SS_k \right\}\\
=&\left\{(\Gamma_{\pi}(\sigma^{\infty}))_{\pi\in\SS_k} \big| \sigma^{\infty}\text{ is a random infinite rooted \emph{shift-invariant} permutation}  \right\} \, . \nonumber
\end{align}

We present the definition of \emph{shift-invariant} permutation in Definition \ref{def_equality_intro}, and we prove the equality in \cref{eq:setofinter} in Proposition \ref{cor_equality_intro}. The equality in \cref{eq:setofnotinter} follow from \cite[Theorem 1.6]{MR2995721}.

The feasible region $clP_k$ was previously studied in several papers (see \cref{sect:feas_reg}). 
The main goal of this project is to analyze the feasible region $P_k$, that turns out to be connected to specific graphs called \emph{overlap graphs} (see \cref{sect:ov_gr}) and their corresponding cycle polytopes (see \cref{sect:cyc_pol}).

\subsection{The feasible region for classical patterns}
\label{sect:feas_reg}

The feasible region $clP_k$ was first studied in \cite{kenyon2015permutations} for some particular families of patterns instead of the whole $\SS_k$. 
More precisely, given a list of finite sets of permutations $(\mathcal{P}_1,\dots,\mathcal{P}_\ell)$, the authors considered the \emph{feasible region} for $(\mathcal{P}_1,\dots,\mathcal{P}_\ell)$, that is, the set
$$\left\{\vec{v}\in [0,1]^{\ell} \Bigg| \exists (\sigma^m)_{m\in\NN} \in \SS^{\NN} \text{ s.t. }|\sigma^m| \to \infty\text{ and } \sum_{\tau\in\mathcal{P}_i}\poc(\tau,\sigma^m ) \to \vec{v}_i,\text{ for }i\in[\ell]  \right\} \, .$$

They first studied the simplest case when $\mathcal{P}_1=\{12\}$ and $\mathcal{P}_2=\{123,213\}$ showing that the corresponding feasible region for $(\mathcal{P}_1 ,\mathcal{P}_2)$ is the region of the square $[0,1]^2$ bounded from below by the parameterized curve $(2t-t^2,3t^2-2t^3)_{t\in[0,1]}$ and from above by the parameterized curve $(1-t^2,1-t^3)_{t\in[0,1]}$  (see \cite[Theorem 13]{kenyon2015permutations}).

They also proved in \cite[Theorem 14]{kenyon2015permutations} that if each $\mathcal{P}_i = \{\tau_i \}$ is a singleton, and there is some value $p$ such that, for all permutations $\tau_i$, the final element $\tau_i(|\tau_i|)$ is equal to $p$ , then the corresponding feasible region is convex. 
They remarked that one can construct examples where the feasible region is not strictly convex: e.g.\ in the case where $\mathcal{P}_1 = \{231,321\}$ and $\mathcal{P}_2 = \{123,213\}$.

They finally studied two additional examples: the feasible regions for $(\{12\},\{123\})$ (see \cite[Theorem 15]{kenyon2015permutations}) and for the patterns $(\{123\},\{321\})$ (see \cite[Section 10]{kenyon2015permutations}). In the first case, they showed that the feasible region is equal to the so-called “scalloped triangle” of
Razborov \cite{MR2433944,MR2371204} (this region also describes the space of limit densities for edges and triangles in graphs). 
For the second case, they showed that the feasible region is equal to the limit of densities of triangles versus the density of anti-triangles in graphs, see \cite{MR3200287,MR3572422}.

\medskip

The set $clP_k$ was also studied in \cite{MR3567538}, even though with a different goal.
There, it was shown that $clP_k$ contains an open ball $B$ with dimension $|I_k|$, where $I_k$ is the set of $\oplus$-indecomposable permutations of size at most $k$.
Specifically, for a particular ball $B\subseteq \R^{I_k}$, the authors constructed permutons $P_{\vec{x}}$ such that $\Delta_{\pi }(P_{\vec{x}}) = \vec{x}_{\pi}$, for each point $\vec{x} \in B$.

This work opened the problem of finding the maximal dimension of an open ball contained in $clP_k$, and placed a lower bound on it.
In \cite{vargas2014hopf} an upper bound for this maximal dimension was indirectly given as the number of so-called \textit{Lyndon permutations} of size at most $k$, whose set we denote $\mathcal{L}_k$.
In this article, the author showed that for any permutation $\pi$ that is not a Lyndon permutation, $\poc(\pi, \sigma ) $ can be expressed as a polynomial on the functions $\{\poc(\tau, \sigma ) |\tau \in \mathcal{L}_k \}$ that does not depend on $\sigma$.
It follows that $clP_k$ sits inside an algebraic variety of dimension $|\mathcal{L}_k|$.
We expect that this bound is sharp since, from our computations, this is the case for small values of $k$.

\begin{conjecture}
The feasible region $clP_k$ is full-dimensional inside a manifold of dimension $|\mathcal{L}_k|$.
\end{conjecture}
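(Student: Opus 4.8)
\medskip

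The assertion splits into two parts: an \emph{upper bound} $\dim clP_k\le|\mathcal L_k|$, which is essentially already available, and a \emph{lower bound} — that $clP_k$ has nonempty relative interior inside the $|\mathcal L_k|$-dimensional variety cut out by the Lyndon relations — which is the real content. For the upper bound, recall from \cite{vargas2014hopf} that for every $\pi\in\SS_k\setminus\mathcal L_k$ there is a polynomial $Q_\pi$, not depending on $\sigma$, with $\poc(\pi,\sigma)=Q_\pi\big((\poc(\tau,\sigma))_{\tau\in\mathcal L_k}\big)$. Hence $clP_k$ lies in the image $V$ of the polynomial map $\Phi\colon\R^{\mathcal L_k}\to\R^{\SS_k}$ that is the identity in the coordinates indexed by $\mathcal L_k$ and equals $Q_\pi$ in the coordinate indexed by $\pi\notin\mathcal L_k$. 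Because the functions $\sigma\mapsto\poc(\tau,\sigma)$ for $\tau\in\mathcal L_k$ are algebraically independent — the permutation analogue of Radford's theorem that Lyndon words freely generate the shuffle algebra — the map $\Phi$ is generically an immersion, so $V$ is an irreducible algebraic variety of dimension exactly $|\mathcal L_k|$, smooth on a dense open subset $V^{\mathrm{sm}}$. The conjecture is thus the statement that $clP_k\cap V^{\mathrm{sm}}$ contains a Euclidean ball of dimension $|\mathcal L_k|$.

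The plan for the lower bound is to exhibit an explicit $|\mathcal L_k|$-parameter family of permutons whose vector of occurrence densities sweeps out an open piece of $V$. I would start from the construction of \cite{MR3567538}, which already produces a ball of dimension $|I_k|$ (indexed by $\oplus$-indecomposable permutations of size $\le k$) by decorating the cells of a block permuton by scalars, and then enlarge it to close the gap $|\mathcal L_k|-|I_k|$: instead of a single scalar, decorate each block by a small permuton taken from a second, nested copy of the same family, so that the densities $\Delta_\pi$ become polynomials in the enlarged parameter set whose top-degree part reproduces the shuffle/quasi-shuffle structure of patterns under direct sums. Equivalently, one exploits that $\sigma\mapsto\sum_\pi\poc(\pi,\sigma)\,M_\pi$ intertwines the direct sum of permutations with the coproduct in the relevant combinatorial Hopf algebra; feeding in generators along the Lyndon factorization and differentiating at a well-chosen base point (e.g.\ a small perturbation of the uniform permuton, which has $\Delta_\pi\equiv 1/|\pi|!$) should give a Jacobian that, after a triangular change of coordinates indexed by Lyndon factorization order, is the identity — hence of full rank $|\mathcal L_k|$. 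Since $clP_k$ is the image of the compact convex space of permutons under the polynomial map $P\mapsto(\Delta_\pi(P))_\pi$, it is compact and connected; producing one point where this restricted map is a submersion onto $V^{\mathrm{sm}}$ then suffices to conclude full-dimensionality, provided one also checks that $clP_k$ is semialgebraic (so that ``dimension'' is unambiguous and a single nondegenerate point controls it).

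The main obstacle is precisely this rank computation: bridging the purely algebraic fact (Lyndon functions are a transcendence basis, so $V$ has dimension $|\mathcal L_k|$) and the analytic one (the image of the \emph{space of permutons}, not of all of $\R^{\mathcal L_k}$, contains a genuine ball). Concretely, one must verify that the constructed infinitesimal deformations remain inside the cone of bona fide permutons — i.e.\ that the chosen base point can be taken relatively interior to $clP_k$ in $V$ — and that no hidden inequalities, of the kind responsible for the curved boundaries already visible for $k=2,3$ in \cite{kenyon2015permutations}, drop the dimension. I expect the combinatorics of Lyndon permutations to be exactly what makes the Jacobian triangular: the lexicographically minimal occurrence of a Lyndon pattern should govern the leading term of the corresponding density in the block parameters, which is what forces linear independence of the rows. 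Carrying this out rigorously — choosing the base permuton, writing the densities as explicit polynomials in the parameters, and reading off the leading monomials — is where the work lies, and it is the part I would expect to be delicate rather than routine.

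\medskip
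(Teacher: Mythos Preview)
The statement you are addressing is a \emph{conjecture} in the paper, not a theorem: the authors do not prove it, and the only evidence they offer is the remark that ``from our computations, this is the case for small values of $k$''. There is therefore no proof in the paper to compare your proposal against.

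As for the proposal itself, you are candid that it is a strategy rather than a proof, and that is accurate. The upper bound you sketch is indeed the content already extracted from \cite{vargas2014hopf} and recalled in the paper. For the lower bound, the idea of enlarging the Glebov--Grzesik--Klimo\v{s}ov\'a--Kr\'al' construction by nesting block permutons so as to pick up the missing $|\mathcal L_k|-|I_k|$ directions, and then reading off a triangular Jacobian ordered by Lyndon factorization, is a plausible line of attack. But the crucial step --- showing that the differential of $P\mapsto(\Delta_\pi(P))_{\pi\in\mathcal L_k}$ has full rank $|\mathcal L_k|$ at some interior permuton --- is precisely the open problem, and you have not carried it out: the ``leading monomial'' heuristic relies on an unproven claim that Lyndon patterns interact with the nested-block parameters in a triangular way, and the assertion that the $\poc(\tau,\cdot)$ for $\tau\in\mathcal L_k$ are algebraically independent \emph{as functions on the space of permutons} (rather than merely as formal symbols in the shuffle Hopf algebra) is itself essentially equivalent to what you are trying to prove. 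So your write-up is a reasonable research plan, but it does not constitute a proof, and the paper does not supply one either.
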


\subsection{Main result}
Unlike with the case of classical patterns, we are able to obtain here a full description of the feasible region $P_k$ as the cycle polytope of a specific graph, called the \emph{overlap graph} $\ValGraph[k]$. 

\begin{definition}
	The graph $\ValGraph[k]$ is a directed multigraph with labeled edges, where the vertices are elements of $\SS_{k-1}$ and for every $\pi\in\SS_{k}$ there is an edge labeled by $\pi$ from the pattern induced by the first $k-1$ indices of $\pi$ to the pattern induced by the last $k-1$ indices of $\pi$.
\end{definition}

The overlap graph $\ValGraph[4]$ is displayed in \cref{Overlap_graph_exemp}.

\begin{figure}[htbp]
	\begin{center}
		\includegraphics[scale=.8]{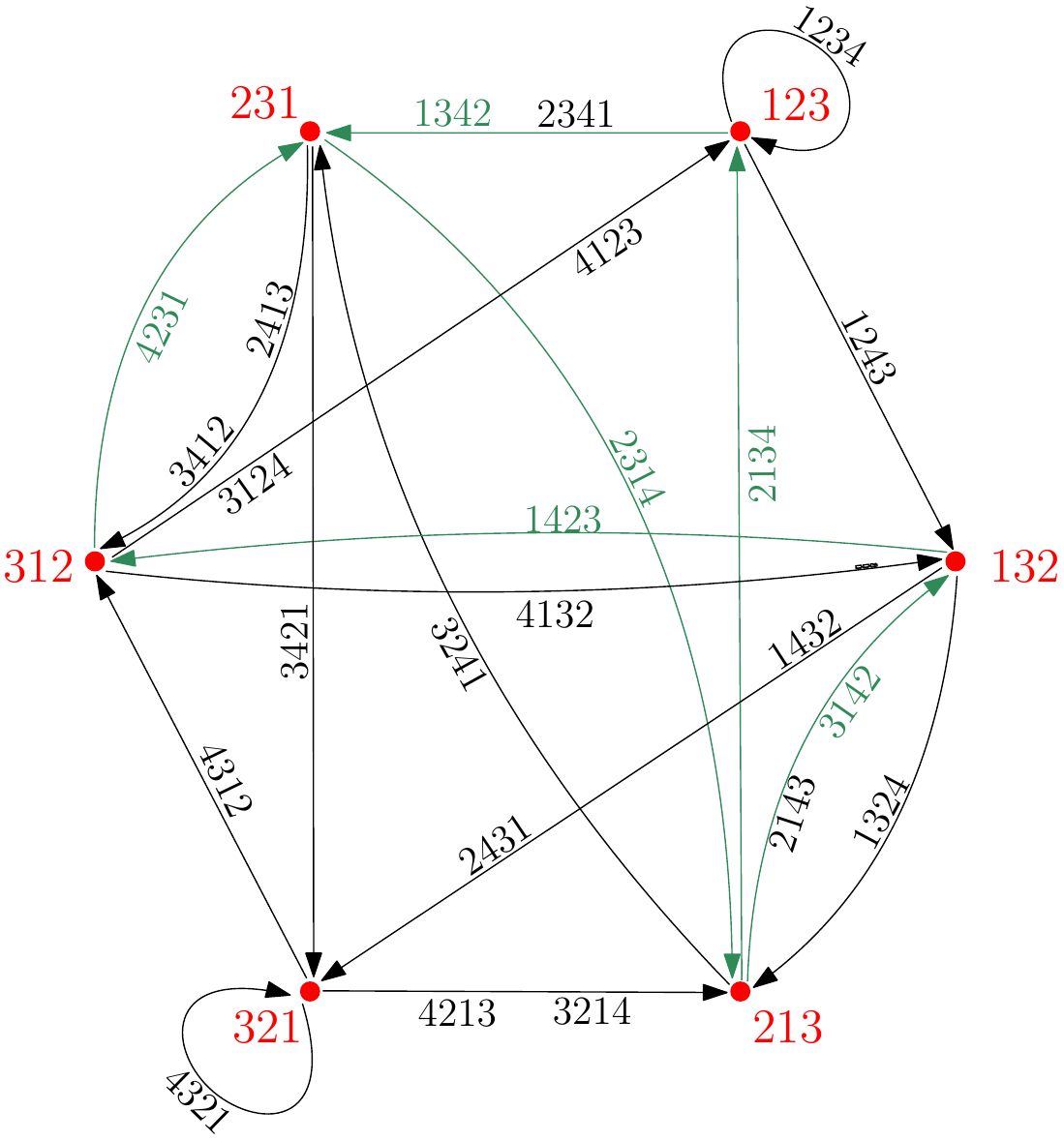}\\
		\caption{The overlap graph $\ValGraph[4]$. The six vertices are painted in red and the edges are drawn as labeled arrows. Note that in order to obtain a clearer picture we did not draw multiple edges, but we use multiple labels (for example the edge $231 \to 312$ is labeled with the permutations $3412$ and $2413$ and should be thought of as two distinct edges labeled with $3412$ and $2413$ respectively). The role of the green arrows is clarified in Example \ref{exemp:overlap_graph2}. \label{Overlap_graph_exemp}}
	\end{center}
\end{figure}

\begin{definition}
	Let $G=(V,E)$ be a directed multigraph.
	For each non-empty cycle $\mathcal{C}$ in $G$, define $\vec{e}_{\mathcal{C}}\in \mathbb{R}^{E}$ so that
	$$(\vec{e}_{\mathcal{C}})_e \coloneqq \frac{\text{\# of occurrences of $e$ in $\sc$}}{|\mathcal{C}|}, \quad \text{for all}\quad e\in E. $$
	We define $P(G) \coloneqq \conv \{\vec{e}_{\mathcal{C}} | \, \mathcal{C} \text{ is a simple cycle of } G \}$, the \emph{cycle polytope} of $G$.
\end{definition}

Our main result is the following.

\begin{theorem}
	\label{thm:main_res}
	$P_k$ is the cycle polytope of the overlap graph $\ValGraph[k]$. Its dimension is $k! - (k-1)!$ and its vertices are given by the simple cycles of $\ValGraph[k]$.
\end{theorem}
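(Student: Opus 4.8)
The plan is to prove the three assertions of \cref{thm:main_res} in sequence: first that $P_k = P(\ValGraph[k])$ as sets, then that the vertices of this polytope are exactly the indicator vectors $\vecc{}$ of simple cycles, and finally that the dimension is $k! - (k-1)!$.

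\textbf{Step 1: Identifying $P_k$ with the cycle polytope.}
The key observation is that consecutive occurrences of size-$k$ patterns in a long permutation $\sigma$ correspond to walks in $\ValGraph[k]$. Reading $\sigma$ from left to right, each window of $k$ consecutive entries induces a pattern in $\SS_k$, hence an edge of $\ValGraph[k]$; two successive windows overlap in $k-1$ entries, so the head of one edge is the tail of the next, and the whole permutation traces a walk of length $|\sigma|-k+1$. Thus the vector $(\pcoc(\pi,\sigma))_{\pi\in\SS_k}$ is, up to a vanishing $O(1/|\sigma|)$ correction, the normalized edge-frequency vector of this walk. This shows $P_k \subseteq P(\ValGraph[k])$: any limit point is a limit of normalized edge-frequency vectors of walks, and it is standard (a cycle-decomposition / Birkhoff--von Neumann type argument on the edge-frequency polytope of a strongly connected digraph) that such limits lie in the convex hull of simple-cycle vectors. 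For the reverse inclusion $P(\ValGraph[k]) \supseteq P_k$, I need: (a) each simple-cycle vector $\vecc{}$ is realized — traverse the cycle $\sc$ and read off the unique permutation (up to the choice of how to ``realize'' the overlapping patterns consistently) whose consecutive-pattern vector is $\vecc{}$, then concatenate $m$ copies and let $m\to\infty$; here one must check that a walk in $\ValGraph[k]$ actually lifts to an honest permutation, which is where the overlap structure (each edge records a full size-$k$ pattern, not just its endpoints) is essential; (b) convex combinations with rational, then real, coefficients are realized by concatenating blocks corresponding to different cycles in the appropriate proportions, with the $O(1/n)$ boundary effects washing out. Then $P_k$, being closed (it is defined as a set of limits) and containing a dense set of convex combinations of the $\vecc{}$, equals $P(\ValGraph[k])$.

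\textbf{Step 2: Vertices are simple cycles.}
By definition $P(\ValGraph[k]) = \conv\{\vecc{} : \sc \text{ simple cycle}\}$, so every vertex is some $\vecc{}$; the content is that \emph{each} $\vecc{}$ is actually a vertex, i.e.\ no $\vecc{}$ lies in the convex hull of the others. For this I would exhibit, for a fixed simple cycle $\sc$, a linear functional maximized uniquely at $\vecc{}$. A natural candidate: since the edges of $\sc$ form a simple cycle, assign weight $1$ to each edge used by $\sc$ and $0$ elsewhere; the value of this functional on $\vec{e}_{\sc'}$ is the fraction of edges of $\sc'$ that also belong to $\sc$, which is $1$ iff $\sc' \subseteq \sc$ as edge sets, and since both are simple cycles this forces $\sc' = \sc$. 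Care is needed because $\ValGraph[k]$ is a multigraph and edges are labeled by permutations (so ``edge'' means ``labeled edge''), but the argument is unaffected: distinct size-$k$ patterns give distinct edges.

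\textbf{Step 3: Dimension.}
The ambient space is $\R^{\SS_k}$ of dimension $k!$, and $\ValGraph[k]$ has $(k-1)!$ vertices. Every edge-frequency vector of a closed walk satisfies, at each vertex $v$, the conservation law ``in-flow $=$ out-flow,'' giving $(k-1)!$ linear equations; these are the only constraints beyond the normalization $\sum_\pi (\vec{v})_\pi = 1$, but the normalization is implied by summing all conservation equations over a spanning structure, so the $(k-1)!$ conservation equations (one redundancy among them corresponds to the fact that their sum is trivial, but the normalization replaces it) cut down to an affine subspace of dimension $k! - (k-1)!$. To get the lower bound I must show $P(\ValGraph[k])$ is full-dimensional in this subspace: equivalently, that the simple-cycle vectors $\vecc{}$ affinely span it. A clean way is to show $\ValGraph[k]$ is strongly connected (any two permutations of size $k-1$ are joined by a directed path — provable by an explicit insertion/deletion argument on one-line notation) and Eulerian-like enough that the cone generated by simple-cycle vectors is the full flow cone; the flow cone of a strongly connected digraph on $V$ vertices with $E$ edges has dimension $E - V + 1$, and after normalization the polytope has dimension $E - V = k! - (k-1)!$. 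The main obstacle is this full-dimensionality: I expect to prove strong connectedness of $\ValGraph[k]$ explicitly and then invoke the standard fact that for a strongly connected digraph the simple cycles span the cycle space, so their normalized versions affinely span the flow polytope.

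The step I expect to be hardest is the reverse inclusion in Step 1 together with the full-dimensionality in Step 3 — both hinge on the combinatorial claim that walks in the overlap graph lift to genuine permutations with the prescribed consecutive patterns, and on establishing strong connectedness of $\ValGraph[k]$; once those are in hand, the polytopal statements (vertices, dimension) follow from standard facts about flow polytopes of strongly connected digraphs.
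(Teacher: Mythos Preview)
Your proposal is correct and follows essentially the same route as the paper: the walk correspondence and cycle-decomposition for $P_k \subseteq P(\ValGraph[k])$, the lifting of (concatenations of) cycles to permutations together with closedness and convexity of $P_k$ for the reverse inclusion, the linear functional supported on the edges of $\sc$ to certify extremality of each $\vec{e}_{\sc}$, and strong connectedness of $\ValGraph[k]$ combined with the cycle-space dimension formula $|E|-|V|$ for the dimension. (One slip of notation: in Step~1 your ``reverse inclusion'' should read $P(\ValGraph[k]) \subseteq P_k$, not $\supseteq$; the argument you give there is indeed for that direction.)
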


In addition, we also determine the equations that describe the polytope $P_k$ (for a precise statement see Theorem \ref{thm:vertices}).

In order to find the dimension, the vertices and the equations describing $P_k$, we first recover and prove general results for cycle polytopes of directed multigraphs (see \cref{sect:cyc_pol}). Then we apply them to the specific case of our graph of interest.

\subsection{The overlap graph}
\label{sect:ov_gr}
Overlap graphs were already studied in previous works. We give here a brief summary of the relevant literature. The overlap graph $\ValGraph[k]$ is the line graph of the \emph{de Bruijn graph for permutations} of size $k-1$. The latter was introduced in \cite{MR1197444}, where the authors studied universal cycles (sometime also called \emph{de Bruijn cycles}) of several combinatorial structures, including permutations. 
In this case, a universal cycle of order $n$ is a cyclic word of size $n!$ on an alphabet of $N$ letters that contains all the patterns of size $n$ as consecutive patterns. In \cite{MR1197444} it was conjectured (and then proved in \cite{MR2548540}) that such universal cycles always exist when the alphabet is of size $N=n+1$. 

The \emph{de Bruijn graph for permutations} was also studied under the name of \emph{graph of overlapping permutations} in \cite{MR3944621} again in relation with universal cycles and universal words. Further, in \cite{asplund2018enumerating} the authors enumerate some families of cycles of these graphs.

We mainly use overlap graphs as a tool to state and prove our results on $P_k$, rather than exploiting its properties. 
We remark that, applying the same ideas used to show the existence of Eulerian and Hamiltonian cycles in classical de Bruijn graphs (see \cite{MR1197444}), it is easy to prove the existence of both Eulerian and Hamiltonian cycles in $\ValGraph[k]$.
In particular, with an Eulerian path in $\ValGraph[k]$, we can construct (although not uniquely) a permutation $\sigma $ of size $k!+k -1$ such that $\coc (\pi,  \sigma ) = 1 $ for any $\pi\in \SS_k$.

\subsection{Polytopes and cycle polytopes}
\label{sect:cyc_pol}

As said before, we recover and prove some general results for cycle polytopes of directed multigraphs. A first result is the following consequence of \cite[Proposition 6]{gleiss2001circuit} (see Proposition \ref{thm:dimension} below for a proof).

\begin{proposition}
	\label{thm:dim_cyc_pol}
	The cycle polytope of a strongly connected directed multigraph $G=(V,E)$ has dimension $|E|-|V|$.
\end{proposition}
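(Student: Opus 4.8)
The plan is to exhibit the affine hull of $P(G)$ explicitly and count its dimension. First I would observe that every vertex $\vec{e}_{\CCC}$ of $P(G)$ satisfies, at each vertex $v \in V$, the ``flow conservation'' equation
\[
\sum_{e \text{ entering } v} (\vec{e}_{\CCC})_e \;=\; \sum_{e \text{ leaving } v} (\vec{e}_{\CCC})_e,
\]
because a (simple) cycle enters and leaves each vertex the same number of times; together with the normalization $\sum_{e \in E} (\vec{e}_{\CCC})_e = 1$, this gives $|V|+1$ linear equations satisfied by all vertices, hence by all of $P(G)$. The incidence matrix of a connected digraph has rank $|V|-1$, so the flow-conservation equations span a space of dimension $|V|-1$; adding the independent normalization constraint, the affine hull of $P(G)$ lies in a flat of dimension at most $|E| - (|V|-1) - 1 = |E| - |V|$. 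This is the easy containment, and it is purely linear algebra.

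The substantive direction is the reverse inequality: $\dim P(G) \geq |E| - |V|$. Here I would invoke \cite[Proposition 6]{gleiss2001circuit}, which for a strongly connected digraph identifies the cycle space (the span of incidence vectors of directed cycles, as a linear subspace of $\R^E$) and shows it has dimension $|E| - |V| + 1$. One then argues that the \emph{affine} span of the normalized vectors $\{\vec{e}_{\CCC}\}$ has dimension exactly one less, namely $|E|-|V|$. Concretely: pick a basis $\CCC_0, \CCC_1, \dots, \CCC_{|E|-|V|}$ of simple cycles whose incidence vectors span the cycle space; strong connectivity guarantees enough simple cycles exist (every edge lies on a simple cycle, and the space is spanned by simple-cycle incidence vectors). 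The differences $\vec{e}_{\CCC_i} - \vec{e}_{\CCC_0}$ for $i = 1, \dots, |E|-|V|$ should then be shown linearly independent, which follows from the linear independence of the underlying (un-normalized) incidence vectors modulo the normalization hyperplane. Combining the two bounds yields $\dim P(G) = |E| - |V|$.

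I expect the main obstacle to be the transition from the \emph{linear} statement about the cycle space in \cite{gleiss2001circuit} (spanned by possibly non-simple cycles, with no normalization) to the \emph{affine-dimension} statement about the polytope spanned by normalized \emph{simple} cycles. Two technical points need care: (i) checking that restricting to simple cycles does not lose dimension — this uses that any directed cycle decomposes into simple cycles, so simple-cycle incidence vectors already span the full cycle space; and (ii) verifying that dividing each incidence vector by its (positive, varying) length $|\CCC|$ and then taking the affine hull drops the dimension by exactly $1$ and not more — i.e. that the cycle space is not contained in the hyperplane $\{\sum_e x_e = 0\}$, which is clear since incidence vectors of cycles have nonnegative entries summing to the positive number $|\CCC|$. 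Once these are in place, the dimension count is immediate. A full proof of this proposition is deferred to Proposition \ref{thm:dimension}.
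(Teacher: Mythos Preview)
Your proposal is correct and follows essentially the same route as the paper: both invoke \cite[Proposition 6]{gleiss2001circuit} to obtain the cycle-space dimension $|E|-|V|+1$ and then pass to the affine hull of the normalized cycle vectors in the hyperplane $\{\sum_e x_e = 1\}$. The only cosmetic difference is in the upper bound: you compute it directly from the incidence-matrix rank plus the normalization constraint, whereas the paper observes more succinctly that any affinely independent subset of $P(G)$ is automatically linearly independent (since $P(G)$ lives in an affine hyperplane not through the origin) and is therefore bounded in size by the dimension of $\spn \mathbb{K}(G)$; the technical points (i) and (ii) you flag are precisely what this observation absorbs.
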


We also recover from \cite[Proposition 1]{gleiss2001circuit} the equations defining the polytope (see Proposition \ref{thm:faces}) and we show that all its faces can be identified with some subgraphs of $G$ (see Theorem \ref{cor:facestruct}). This gives us a description of the face poset of the polytope.
Further, the computation of the dimension is generalized for any cycle polytope, even those that do not come from strongly connected graphs (see Proposition \ref{thm:dimfullgr}).

\subsection{Mixing classical patterns and consecutive patterns}
\label{sect:mixing}
We saw in \cref{sect:feas_reg} that the feasible region $clP_k$ for classical pattern occurrences has been studied in several papers. In this paper we study the feasible region $P_k$ of limiting points for consecutive pattern occurrences. A natural question is the following: what is the feasible region if we mix classical and consecutive patterns?

We answer this question showing that:
\begin{theorem}
	\label{thm:mixing}
	For any two points $\vec{v}_1 \in clP_k, \vec{v}_2\in P_k$, there exists a sequence of permutations $(\sigma^m)_{m\in\NN}$ such that $|\sigma^m| \to \infty$, satisfying  
	$$(\poc(\pi, \sigma^m ))_{\pi\in\SS_k} \to \vec{v}_1 \quad \text{ and } \quad (\pcoc(\pi, \sigma^m ))_{\pi\in\SS_k} \to \vec{v}_2.$$
\end{theorem}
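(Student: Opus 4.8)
The plan is to build the desired sequence $(\sigma^m)$ by interleaving, at two very different scales, a sequence that realizes the permuton data $\vec v_1$ and a sequence that realizes the consecutive-pattern data $\vec v_2$. More precisely: by \cref{thm:carac_occ} (together with the equality in \cref{eq:setofnotinter}) there is a sequence $(\tau^m)$ with $|\tau^m|\to\infty$ and $\poc(\pi,\tau^m)\to(\vec v_1)_\pi$ for all $\pi\in\SS_k$; by the definition of $P_k$ (and its characterization in \cref{eq:setofinter}, i.e.\ \cref{thm:main_res}) there is a sequence $(\rho^m)$ with $|\rho^m|\to\infty$ and $\pcoc(\pi,\rho^m)\to(\vec v_2)_\pi$ for all $\pi\in\SS_k$. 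I would then fix, for each $m$, a small block size $b=b(m)$ with $b\to\infty$ but $b$ negligible compared to $|\rho^m|$, and form $\sigma^m$ as the \emph{inflation} of $\tau^{n}$ (for a suitably chosen $n=n(m)\to\infty$) in which each of the $n$ points of $\tau^n$ is replaced by a consecutive block that is a scaled copy of an initial segment of $\rho^m$ of length $b$. Formally, $\sigma^m$ has size $nb$; its $i$-th block occupies positions $((i-1)b, ib]$ and value-interval $((\tau^n(i)-1)b,\tau^n(i)b]$, and inside that window the relative order of the $b$ points is that of the length-$b$ prefix pattern of $\rho^m$.

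The two limit computations are then essentially independent, which is the whole point. For \emph{classical} occurrences: a uniform random $k$-subset of positions in $\sigma^m$ falls, with probability $\to 1$ as $n\to\infty$ for fixed $b$ (and then $b\to\infty$), into $k$ distinct blocks; conditionally on landing in $k$ distinct blocks, the induced pattern is exactly the pattern induced by the corresponding $k$ blocks of $\tau^n$, because the blocks are arranged according to $\tau^n$ and each block lives in its own value-strip. Hence $\poc(\pi,\sigma^m)\to\lim_n\poc(\pi,\tau^n)=(\vec v_1)_\pi$. For \emph{consecutive} occurrences: a uniformly random window of $k$ consecutive positions lies, with probability $\to 1$ (since $b\to\infty$ while $k$ is fixed, and the number of ``boundary'' windows straddling two blocks is $O(n k)$ out of $\approx nb$ total), entirely inside a single block; inside a block of size $b$ the consecutive $k$-pattern statistics are exactly those of $\rho^m$ truncated to its length-$b$ prefix, so they converge to $(\vec v_2)_\pi$ provided we also let the prefix length grow, e.g.\ by choosing $\rho^m$ itself of size $b(m)$ after passing to a subsequence. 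I would organize this by first choosing $b(m)\to\infty$ slowly, setting the block content to be $\rho^{m}$ of size $b(m)$ (replacing the original sequence $(\rho^m)$ by a subsequence so that $|\rho^m|=b(m)$), and then choosing $n(m)\to\infty$ fast enough that the error terms $O(1/b)$ from within-block atypicality and $O(k/b)$ from boundary windows, as well as the $O(k^2/n)$ collision error for classical patterns, all tend to $0$.

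The main obstacle — really the only delicate bookkeeping — is controlling the two families of error terms \emph{simultaneously} with a single diagonal choice of the parameters $b(m)$ and $n(m)$, and in particular making sure that the within-block contribution to classical occurrences (the $O(1/b)$ fraction of $k$-subsets that land in fewer than $k$ blocks) genuinely vanishes rather than contributing a spurious limit: this is handled by first sending $n\to\infty$ with $b$ fixed to kill collisions, and only afterwards sending $b\to\infty$, which forces the ordering of parameters $b(m)\ll n(m)$ in the diagonal extraction. A secondary point is that one should check the construction does not accidentally constrain $\vec v_2$: since every length-$k$ window inside a block sees only $\rho^m$, no feature of $\tau^n$ leaks into the consecutive statistics, so any $\vec v_2\in P_k$ is attainable; symmetrically, since $b$ is negligible compared to $|\sigma^m|$ the block structure does not perturb the global permuton, so any $\vec v_1\in clP_k$ is attainable. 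Once the parameter selection is pinned down, both convergences follow from elementary counting as sketched above.
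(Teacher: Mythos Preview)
Your construction is exactly the paper's: the substitution (inflation) $\sigma^m=\tau^{n}[\rho,\dots,\rho]$ with the classical-pattern sequence on the outside and the consecutive-pattern sequence as the inner blocks (the paper states and proves this as Theorem~\ref{thm:ABC} with $\sigma^m_C\coloneqq\sigma^m_B[\sigma^m_A,\dots,\sigma^m_A]$). One simplification worth noting: the diagonal parameter-ordering you flag as the ``main obstacle'' ($b(m)\ll n(m)$) is not actually needed, since the classical collision error is $O(k^2/n)$ and the consecutive boundary error is $O(k/b)$, which decouple completely; the paper therefore just indexes both the inner and outer permutations by the same $m$ and lets both sizes tend to infinity independently, avoiding any subsequence or prefix-truncation bookkeeping.
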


This result shows a sort of independence between classical patterns and consecutive patterns, in the sense that knowing the proportion of classical patterns of a certain sequence of permutations imposes no constraints for the proportion of consecutive patterns on the same sequence and vice versa.

We stress that we provide an explicit construction of the sequence $(\sigma^m)_{m\in\NN}$ in the theorem above (for a more precise and general statement, see Theorem \ref{thm:ABC}).

We conclude this section with the following observation on local and scaling limits of permutations.
\begin{observation}
	In Theorem \ref{thm:carac_occ} and Theorem \ref{thm:carac_cocc} we saw that the proportion of occurrences (resp.\ consecutive occurrences) in a sequence of permutations $(\sigma^m)_{m\in\NN}$ characterizes the permuton limit (resp.\ Benjamini--Schramm limit) of the sequence. Theorem \ref{thm:mixing} proves that the permuton limit of a sequence of permutations induces no constraints for the Benjamini--Schramm limit and vice versa. For instance, we can construct a sequence of permutations where the permuton limit is the decreasing diagonal and the Benjamini--Schramm limit is the classical increasing total order on the integer numbers.
	
	We remark that a particular instance of this ``independence phenomenon'' for local/scaling limits of permutations was recently also observed by Bevan, who pointed out in the abstract of \cite{bevan2019permutations} that ``the knowledge of the local structure of uniformly random permutations with a specific fixed proportion of inversions reveals nothing about their global form''. Here, we prove that this is a \emph{universal phenomenon} which is not specific to the framework studied by Bevan.
\end{observation}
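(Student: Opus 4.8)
The plan is to prove Theorem~\ref{thm:mixing} (from which the Observation is then immediate, combining it with Theorem~\ref{thm:carac_occ} and Theorem~\ref{thm:carac_cocc}), and the key idea is a two–scale \emph{inflation} construction that decouples the global shape of a permutation from its local, consecutive structure. First I would unpack the two inputs. Since $\vec v_1\in clP_k$, by the very definition in \cref{eq:setofnotinter} there is a sequence $(\alpha^n)_{n\in\NN}$ with $|\alpha^n|\to\infty$ and $\poc(\pi,\alpha^n)\to(\vec v_1)_\pi$ for every $\pi\in\SS_k$. Since $\vec v_2\in P_k$, the cheap route is to take directly a witnessing sequence $(\beta^l)_{l\in\NN}$ with $|\beta^l|\to\infty$ and $\pcoc(\pi,\beta^l)\to(\vec v_2)_\pi$; but for the \emph{explicit} construction promised in Theorem~\ref{thm:ABC} I would instead use Theorem~\ref{thm:main_res}, writing $\vec v_2$ (or a point of $P_k$ within $1/l$ of it) as a rational convex combination $\sum_i\lambda_i\,\vecc[i]$ of indicator vectors of simple cycles of $\ValGraph[k]$, and then, using that $\ValGraph[k]$ is strongly connected, building one long closed walk that traverses each cycle $\mathcal C_i$ a number of times proportional to $\lambda_i$ while joining consecutive cycles by paths of bounded length; reading off the edge–labels of this walk yields an explicit permutation $\beta^l$ of size $\asymp l$ whose consecutive $k$–pattern frequencies equal the edge frequencies of the walk, hence converge to $\vec v_2$ (the bounded connecting paths contribute a vanishing fraction of the edges).

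Next I would define $\sigma^m$ to be the inflation of $\alpha^{n_m}$ in which each entry is replaced by a scaled–down block carrying a copy of $\beta^{l_m}$, with indices $n_m,l_m\to\infty$ chosen by a diagonal argument so that both families approximate $\vec v_1$ and $\vec v_2$ to within $1/m$; then $|\sigma^m|=n_m\,l_m\to\infty$. Two routine estimates finish the proof. For classical occurrences: choosing $k$ positions of $\sigma^m$ uniformly at random, with probability $1-O(k^2/n_m)$ they lie in $k$ distinct blocks, and conditionally on this event the induced pattern has exactly the law of the standardization of a uniform random $k$–subset of the entries of $\alpha^{n_m}$, because distinct blocks occupy disjoint, correctly ordered value–intervals; hence $\poc(\pi,\sigma^m)=\poc(\pi,\alpha^{n_m})+o(1)\to(\vec v_1)_\pi$. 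For consecutive occurrences: among the $|\sigma^m|-k+1$ windows of $k$ consecutive positions, exactly $(n_m-1)(k-1)$ straddle two blocks, a fraction $O((k-1)/l_m)$, and each of the remaining, within–block, windows induces a length–$k$ consecutive pattern of $\beta^{l_m}$ since inflation preserves the relative order inside a block; hence $\pcoc(\pi,\sigma^m)=\pcoc(\pi,\beta^{l_m})+o(1)\to(\vec v_2)_\pi$.

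The main obstacle is the second half of the first paragraph: turning an \emph{arbitrary} (in particular irrational) point $\vec v_2$ of the cycle polytope into a concrete permutation $\beta^l$ with $\pcoc(\pi,\beta^l)\to(\vec v_2)_\pi$. This needs (i) approximating $\vec v_2$ by rational convex combinations of simple cycles of $\ValGraph[k]$, (ii) realizing a prescribed rational combination of cycles by a single long walk, which is exactly where strong connectivity — and the Eulerian/Hamiltonian structure of the overlap graph recalled in \cref{sect:ov_gr} — is used, and (iii) bounding the distortion coming from the connecting paths between cycles. Everything else (the inflation, the two $o(1)$ estimates, and the final diagonal extraction producing the single sequence $(\sigma^m)_{m\in\NN}$) is then straightforward, and the same scheme, tracked quantitatively, gives the sharper Theorem~\ref{thm:ABC}.
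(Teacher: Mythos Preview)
Your core approach is the same as the paper's: the substitution/inflation $\sigma^m_C=\sigma^m_B[\sigma^m_A,\dots,\sigma^m_A]$ (your $\alpha^{n_m}[\beta^{l_m},\dots,\beta^{l_m}]$), together with the two estimates (i) random $k$-subsets hit $k$ distinct blocks with probability $1-O(k^2/|\sigma^m_B|)$, and (ii) only $O(|\sigma^m_B|\cdot k)$ consecutive windows straddle two blocks. These are exactly the bounds in the paper's proof of Theorem~\ref{thm:ABC}, and the paper uses the same index $m$ for both sequences rather than a diagonal extraction, which is slightly cleaner but immaterial.

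Where you diverge is in what you call ``the main obstacle'': you propose to construct $\beta^l$ explicitly from simple cycles of $\ValGraph[k]$ via rational approximation, strong connectivity, and long walks. The paper does none of this. In Theorem~\ref{thm:ABC} the witnessing sequences $(\sigma^m_A),(\sigma^m_B)$ are \emph{given} (they exist by definition of the feasible regions in \cref{eq:ABdef}), and the ``explicit construction'' advertised below Theorem~\ref{thm:mixing} refers only to the substitution formula \cref{eq:assump2}, not to building the inputs from scratch. Your detour through Theorem~\ref{thm:main_res} is correct but entirely unnecessary, and it makes the argument depend on the structure of $P_k$ when in fact Theorem~\ref{thm:ABC} is logically independent of \cref{sec:Pn_is_poly}. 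So the thing you flag as the hard step is not a step at all.
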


\subsection{Outline of the paper} 
\label{sect:main results}

The paper is organized as follows:
\begin{itemize}
	\item In \cref{sec:Cycle_pol_dim} we analyze directed multigraphs and consider their \textit{cycle polytopes}. 
	There, we prove Proposition \ref{thm:dim_cyc_pol} and the results mentioned immediately below it.
	
	\item Our results regarding $P_k$ come in \cref{sec:Pn_is_poly}, where we prove Theorem \ref{thm:main_res}.
	
	\item Finally, we prove in \cref{sec:class_and_cons} a more precise version of Theorem \ref{thm:mixing}. 
\end{itemize}

\subsection{Notation}
\label{sect:notation} 

We summarize here the notation and some basic definitions used in the paper.

\subsubsection*{Permutations and patterns}
We let $\NN=\{1,2,\dots\}$ denote the collection of strictly positive integers. For every $n\in\NN$, we view permutations of $[n]=\{1,2,\dots,n\}$ as words of size $n$, and write them using the one-line notation $\sigma=\sigma(1)\sigma(2)\dots\sigma(n)$. We denote by $\mathcal{S}_n$ the set of permutations of size $n$, by $\mathcal{S}_{\geq n}$ the set of permutations of size at least $n$, and by $\mathcal{S}$ the set of permutations of finite size.  

We often view a permutation $\sigma\in\SS_n$ as a diagram, specifically as an $n\times n$ board with $n$ points at positions $(i,\sigma(i))$ for all $i\leq n$.

If $x_1, \dots , x_n$ is a sequence of distinct numbers, let $\std(x_1, \dots,  x_n)$ be the unique permutation $\pi$ in $\SS_n$ whose elements are in the same relative order as $x_1, \dots,  x_n$, i.e.\ $\pi(i)<\pi(j)$ if and only if $x_i<x_j.$
Given a permutation $\sigma\in\SS_n$ and a subset of indices $I\subseteq[n]$, let $\text{pat}_I(\sigma)$ be the permutation induced by $(\sigma(i))_{i\in I}$, namely, $\text{pat}_I(\sigma)\coloneqq\std\left((\sigma(i))_{i\in I}\right).$
For example, if $\sigma=87532461$ and $I=\{2,4,7\}$, then $\text{pat}_{\{2,4,7\}}(87532461)=\std(736)=312$. 

Given two permutations, $\sigma\in\SS_n$, $\pi\in\SS_k$ for some positive integers $n \geq k$, we say that $\sigma$ contains $\pi$ as a \textit{pattern} if there exists a \emph{subset} $I\subseteq[n]$ such that $\text{pat}_I(\sigma)=\pi$, that is, if $\sigma$ has a subsequence of entries order-isomorphic to $\pi$.
Denoting by $i_1, i_2, \dots, i_k$ the elements of $I$ in increasing order, the subsequence $\sigma(i_1) \sigma(i_2) \dots \sigma(i_k)$ is called an \emph{occurrence} of $\pi$ in $\sigma$. 
In addition, we say that $\sigma$ contains $\pi$ as a \textit{consecutive pattern} if there exists an \emph{interval} $I\subseteq[n]$ such that $\text{pat}_I(\sigma)=\pi$, 
that is, if $\sigma$ has a subsequence of adjacent entries order-isomorphic to $\pi$. 
Using the same notation as above, $\sigma(i_1) \sigma(i_2) \dots \sigma(i_k)$ is then called a \emph{consecutive occurrence} of $\pi$ in $\sigma$. 

We denote by $\oc(\pi,\sigma)$ the number of occurrences of a pattern $\pi$ in $\sigma$, more precisely
\begin{equation*}
\oc(\pi,\sigma)\coloneqq \Big| \Big\{I\subseteq[n]| \text{pat}_I(\sigma)=\pi\Big\}\Big| \, .
\end{equation*}
We denote by $\coc(\pi,\sigma)$ the number of consecutive occurrences of a pattern $\pi$ in $\sigma$, more precisely
\begin{equation*}
\coc(\pi,\sigma)\coloneqq \Big|\Big\{I\subseteq[n] | \;I\text{ is an interval, } \text{pat}_I(\sigma)=\pi\Big\} \Big|.
\end{equation*}
Moreover, we denote by $\widetilde{\oc}(\pi,\sigma)$ (resp.\ by $\widetilde{\coc}(\pi,\sigma)$) the proportion of occurrences (resp.\ consecutive occurrences) of a pattern $\pi$ in $\sigma$, that is,
\begin{equation*}
\label{conpatden}
\poc(\pi,\sigma)\coloneqq\frac{\oc(\pi,\sigma)}{\binom{n}{k}}\in[0,1] , \quad \quad \pcoc(\pi,\sigma)\coloneqq\frac{\coc(\pi,\sigma)}{n}\in[0,1]\, .
\end{equation*}
\begin{remark}
The natural choice for the denominator of the expression in the right-hand side of the equation above should be $n-k+1$ and not $n,$ but we make this choice for later convenience. Moreover, for every fixed $k,$ there are no difference in the asymptotics when $n$ tends to infinity. 
\end{remark}
For a fixed $k\in\NN$ and a permutation $\sigma\in\SS_{\geq k}$, we let $\poc_k ( \sigma ), \pcoc_k ( \sigma ) \in [0, 1]^{\SS_k}$ be the vectors 
\[\poc_k ( \sigma )\coloneqq \left(\poc(\pi,\sigma)\right)_{\pi\in\SS_k}, \quad \quad \pcoc_k ( \sigma )\coloneqq \left(\pcoc(\pi,\sigma)\right)_{\pi\in\SS_k}\, .\]

Finally, we denote with $\oplus$ the direct sum of two permutations, i.e.\ for $\tau\in\SS_m$ and $\sigma\in\mathcal{S}_n$, 
$$\tau\oplus\sigma=\tau(1)\dots\tau(k)(\sigma(1)+m)\dots(\sigma(n)+m)\, , $$
and we denote with $\oplus_{\ell}\,\sigma$ the direct sum of $\ell$ copies of $\sigma$ (we remark that the operation $\oplus$ is associative).

\subsubsection*{Polytopes} 

Given a set $S \subseteq \R^n $, we define the \textit{convex hull} (resp.\ \textit{affine span}, \textit{linear span}) of $S$ as the set of all \textit{convex combinations} (resp.\ \textit{affine combinations}, \textit{linear combinations}) of points in $S$, that is

$$ \conv (S) =  \left\{ \sum_{i=1}^k \alpha_i \vec{v}_i \, \Bigg| \vec{v}_i \in S, \alpha_i\in [0, 1] \text{ for } i=1, \dots k \, , \sum_{i = 1}^k \alpha_i = 1  \right\} \, , $$

$$ \Aff (S) =  \left\{ \sum_{i=1}^k \alpha_i \vec{v}_i \, \Bigg| \vec{v}_i \in S, \alpha_i\in \R \text{ for } i=1, \dots k \, , \sum_{i = 1}^k \alpha_i = 1  \right\} \, , $$

$$ \spn (S) =  \left\{ \sum_{i=1}^k \alpha_i \vec{v}_i \, \Bigg| \vec{v}_i \in S, \alpha_i\in \R \text{ for } i=1, \dots k \right\} \, . $$

\begin{definition}[Polytope]
	A polytope $\mathfrak{p}$ in $\R^n$ is a bounded subset of $\R^n$ described by $m$ linear inequalities.
	That is, there is some $m \times n$ real matrix $A$ and a vector $b\in\R^m$ such that
	$$\mathfrak{p} = \{\vec{x} \in \R^n | A\vec{x} \geq b \} \, . $$
	
	The dimension of a polytope $\mathfrak{p}$ in $\R^n$ is the dimension of $\Aff \mathfrak{p}$ as an affine space.
\end{definition}

For any polytope, there is a unique minimal finite set of points $\mathcal{P} \subset \R^n$ such that $\mathfrak{p} = \conv \mathcal{P}$, see \cite{ziegler2012lectures}.
This family $\mathcal{P}$ is called the set of \textit{vertices} of $\mathfrak{p}$.

\begin{definition}[Faces of a polytope]
	Let $\mathfrak{p}$ be a polytope in $\R^n$.
	A linear form in $\R^n$ is a linear map $f:\R^n \to \R$. Its minimizing set on $\mathfrak{p}$ is the subset $\mathfrak{p}_f \subseteq \mathfrak{p}$ where $f$ takes minimal values.
	This set always exists because $\mathfrak{p}$ is compact.
	
	A face of $\mathfrak{p} $ is a subset $\mathfrak{f}\subseteq \mathfrak{p}$ for which there exists a linear form $f$ that satisfies $\mathfrak{p}_f=\mathfrak{f}$.
	A face is also a polytope, and any vertex of $\mathfrak{p}$ is a face of $\mathfrak{p}$.
	The faces of a polytope $\mathfrak{p}$ form a poset when ordered by inclusion, called the \textit{face poset}.
\end{definition}

We observe that the vertices of a polytope are exactly the singletons that are faces.

\begin{remark}[Computing faces and vertices of a polytope]\label{rem:facescomp}
If $f$ is a linear form in $\R^{n}$ and $\mathfrak{p} = \conv A \subseteq \R^n$, then 
\begin{equation}\label{eq:minconv}
\mathfrak{p}_f = \conv\{ \arg\min_{a\in A} f(a) \} \, . 
\end{equation}

In particular, the vertices $V$ of $\conv A $ satisfy $V \subseteq A$.
Also, when computing $ \mathfrak{p}_f$, it suffices to evaluate $f$ on $A$.
\end{remark}

\subsubsection*{Directed graphs}

All graphs, their subgraphs and their subtrees are considered to be directed multigraphs in this paper (and we often refer to them as directed graphs or simply as graphs).
In a directed multigraph $G=(V(G),E(G))$, the set of edges $E(G)$ is a multiset, allowing for loops and parallel edges. 
An edge $e \in E(G) $ is an oriented pair of vertices, $(v, u)$, often denoted by $e=v \to u$. We write $\st(e)$ for the starting vertex $v$ and $\ar(e)$ for the arrival vertex $u$.
We often consider directed graphs $G$ with labeled edges, and write $\lb( e ) $ for the label of the edge $e\in E(G)$.
In a graph with labeled edges we refer to edges by using their labels.
Given an edge $e=v \to u\in E(G)$, we denote by $C_{G}(e)$ (for ``set of \textit{continuations} of $e$'') the set of edges $e' \in E(G)$ such that $e'=u\to w$ for some $w\in V(G)$, i.e.\  $C_{G}(e) = \{e'\in E(G) | \st(e') = \ar(e) \}$.

A \textit{walk} of size $k $ on a directed graph $G$ is a sequence of $k$ edges $(e_1, \dots , e_k)\in E(G)^{k}$ such that for all $i\in [k-1]$, $\ar(e_i)=\st(e_{i+1}).$
A walk is a \textit{cycle} if $\st(e_{1}) = \ar(e_k)$.
A walk is a \textit{path} if all the edges are distinct, as well as its vertices, with a possible exception that $\st ( e_1) = \ar ( e_k ) $ may happen.
A cycle that is a path is called a \textit{simple cycle}. 
Given two walks $w=(e_1, \dots , e_k)$ and $w'=(e'_1, \dots , e'_{k'})$ such that $\ar(e_k)=\st(e'_1)$, we write $w\star w'$ for the concatenation of the two walks, i.e.\ $w \star w'=(e_1, \dots , e_k,e'_1, \dots , e'_{k'})$.
For a walk $w$, we denote by $|w|$ the number of edges in $w$.

Given a walk $w=(e_1, \dots , e_k)$ and an edge $e$, we denote by $n_e(w)$ the number of times the edge $e$ is traversed in $w$, i.e.\ $n_e(w) \coloneqq | \{i\leq k|e_i=e\}| $. 

For a vertex $v$ in a directed graph $G$, we define $\deg^i_{G} (v)$ to be the number of incoming edges to $v$, i.e.\ edges $e\in E(G)$ such that $\ar(e) = v$, and $\deg^o_{G} (v)$ to be the number of outgoing edges in $v$, i.e.\ edges $e\in E(G)$ such that $\st(e) = v$.
Whenever it is clear from the context, we drop the subscript $G$.

The \emph{incidence matrix} of a directed graph $G$ is the matrix $L(G)$ with rows indexed by $V(G)$, and columns indexed by $E(G)$, such that for any edge $e=v \to u$ with $v \neq u$, the corresponding column in $L(G)$ has $(L(G))_{v, e} = -1$, $ (L(G)))_{u, e} = 1$ and is zero everywhere else. Moreover, if $e=v \to v$ is a loop, the corresponding column in $L(G)$ has $(L(G))_{v, e} = 1$ and is zero everywhere else.

For instance, we show in \cref{fig:triangraph} a graph $G$ with its incidence matrix $L(G)$.

\begin{figure}[htbp]
	\begin{center}
		\begin{equation*}
		G=\begin{array}{ccc}
		\includegraphics[scale=0.7]{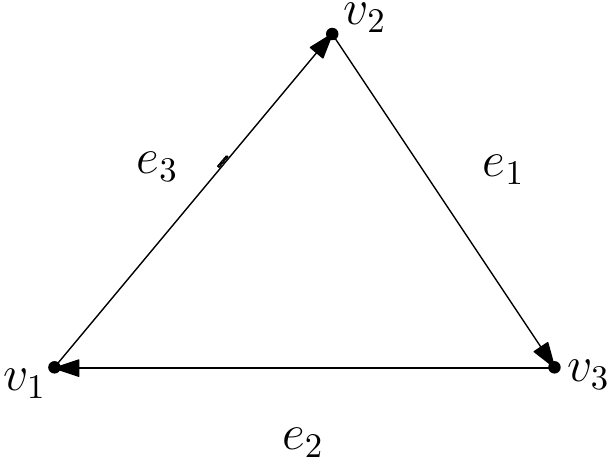}
		\end{array},
		\qquad L(G)=
		\begin{blockarray}{*{3}{c} l}
		\begin{block}{*{3}{>{$\footnotesize}c<{$}} l}
		$e_1$ & $e_2$ & $e_3$ & \\
		\end{block}
		\begin{block}{[*{3}{c}]>{$\footnotesize}l<{$}}
		0 & 1 & -1 & $v_1$ \\
		-1 & 0 & 1 & $v_2$ \\
		1 & -1 & 0 & $v_3$ \\
		\end{block}
		\end{blockarray}\,.
		\end{equation*}\\
		\caption{A graph $G$ with its incidence matrix $L(G)$. \label{fig:triangraph}}
	\end{center}
\end{figure}

A directed graph $G$ is said to be \textit{strongly connected} if for $v_1, v_2 \in V(G)$, there is a path starting in $v_1$ and arriving in $v_2$.
For instance, the graph in \cref{fig:triangraph} is strongly connected. 

\section{The cycle polytope of a graph}
\label{sec:Cycle_pol_dim}

In this section we establish general results about the cycle polytope of a graph, and recover some facts already present in the literature.
Here, all graphs are considered to be directed multigraphs that may have loops and parallel edges, unless stated otherwise. We recall the definition of cycle polytope.

\begin{definition}[Cycle polytope] \label{defn:cyc_pol}
	Let $G$ be a directed graph.
	For each non-empty cycle $\mathcal{C}$ in $G$, define $\vec{e}_{\mathcal{C}}\in \mathbb{R}^{E(G)}$ so that
	$$(\vec{e}_{\mathcal{C}})_e \coloneqq \frac{n_e(\sc)}{|\mathcal{C}|}, \quad \text{for all}\quad e\in E(G). $$
	We define $P(G) \coloneqq \conv \{\vec{e}_{\mathcal{C}} | \, \mathcal{C} \text{ is a simple cycle of } G \}$ to be the \emph{cycle polytope} of $G$.
\end{definition}

We additionally consider here the \emph{cycle cone} $\mathbb{K}(G)$ defined as follows
$$\mathbb{K}(G) \coloneqq \left\{\sum_i \alpha_i \vec{e}_{\CCC} | \, \mathcal{C} \text{ is a simple cycle of } G, \, \alpha_i \geq 0 \right\}\, . $$

We remark that this cone has already been studied in the literature.
Specifically, it is the \emph{``non-negative cone of the cycle space''} in \cite{gleiss2001circuit}, according to \cite[Proposition 4]{gleiss2001circuit}.

\subsection{Vertices of the cycle polytope}
We start by giving a full description of the vertices of this polytope.

\begin{proposition}
	\label{prop:vertices}
The set of vertices of $P(G) $ is precisely $\{\vec{e}_{\mathcal{C}} | \, \mathcal{C} \text{ a simple cycle of } G \}$.
\end{proposition}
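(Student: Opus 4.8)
The plan is to show two inclusions: every $\vec{e}_{\mathcal{C}}$ with $\mathcal{C}$ a simple cycle is a vertex of $P(G)$, and conversely every vertex of $P(G)$ is of this form. The second inclusion is the easy one: by \cref{rem:facescomp}, the vertices of a polytope presented as $\conv A$ are contained in $A$, so every vertex of $P(G)$ lies in $\{\vec{e}_{\mathcal{C}} \mid \mathcal{C} \text{ a simple cycle of } G\}$. It therefore remains to prove that each such point $\vec{e}_{\mathcal{C}}$ is genuinely a vertex, i.e.\ cannot be written as a non-trivial convex combination of the other points $\vec{e}_{\mathcal{C}'}$.

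For this I would exhibit, for a fixed simple cycle $\mathcal{C}$, a linear form $f$ on $\R^{E(G)}$ whose minimizing set on $P(G)$ is exactly $\{\vec{e}_{\mathcal{C}}\}$. The natural choice is to penalise edges outside $\mathcal{C}$: set $f(\vec{x}) = \sum_{e \notin E(\mathcal{C})} \vec{x}_e$ (equivalently, minimise $\vec{x} \cdot \One_{E(G) \setminus E(\mathcal{C})}$). Since all coordinates of every $\vec{e}_{\mathcal{C}'}$ are non-negative, $f \geq 0$ on $P(G)$, and $f(\vec{e}_{\mathcal{C}}) = 0$ because $\mathcal{C}$ being a simple cycle traverses only edges of $E(\mathcal{C})$. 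By \eqref{eq:minconv} it suffices to check that $f(\vec{e}_{\mathcal{C}'}) > 0$ for every simple cycle $\mathcal{C}' \neq \mathcal{C}$, so that the minimizing face is the convex hull of a single point. Suppose $f(\vec{e}_{\mathcal{C}'}) = 0$; then $\mathcal{C}'$ uses only edges of $E(\mathcal{C})$. But a simple cycle contained (edge-wise) in another simple cycle must coincide with it: traversing $\mathcal{C}$, each vertex it visits has exactly one outgoing edge within $E(\mathcal{C})$ and one incoming edge within $E(\mathcal{C})$, so a walk that closes up using only these edges is forced to follow $\mathcal{C}$ around, hence equals $\mathcal{C}$ (up to cyclic rotation, which does not change $\vec{e}_{\mathcal{C}}$). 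This contradicts $\mathcal{C}' \neq \mathcal{C}$, so $f(\vec{e}_{\mathcal{C}'}) > 0$, and $\{\vec{e}_{\mathcal{C}}\}$ is a face, i.e.\ $\vec{e}_{\mathcal{C}}$ is a vertex.

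The main obstacle I anticipate is making the combinatorial claim "a simple cycle whose edge set is contained in the edge set of another simple cycle must equal it" fully rigorous in the multigraph setting, being careful about loops, parallel edges, and the fact that $\vec{e}_{\mathcal{C}}$ records edge multiplicities divided by length: one has to argue at the level of the cyclic edge sequence, not just the underlying vertex sequence. A clean way is to note that along a simple cycle the in- and out-degrees restricted to $E(\mathcal{C})$ are all $1$, so $E(\mathcal{C})$ already forms a single cycle and admits no proper sub-cycle. Everything else — non-negativity, the value of $f$ on $\vec{e}_{\mathcal{C}}$, the appeal to \cref{rem:facescomp} for the reverse inclusion — is routine.
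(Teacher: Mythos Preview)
Your proposal is correct and follows essentially the same approach as the paper: both use \cref{rem:facescomp} for the easy inclusion, and for each simple cycle $\mathcal{C}$ exhibit a linear form whose unique minimizer on $P(G)$ is $\vec{e}_{\mathcal{C}}$, reducing to the combinatorial fact that a simple cycle whose edges lie in $E(\mathcal{C})$ must equal $\mathcal{C}$. The only cosmetic difference is that the paper minimizes $-\sum_{e\in\mathcal{C}} x_e$ whereas you minimize $\sum_{e\notin\mathcal{C}} x_e$, which are equivalent on the hyperplane $\sum_e x_e = 1$ containing $P(G)$.
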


This result was established in \cite[Proposition 4]{gleiss2001circuit} in the context of cones, that is by describing the \emph{``extreme vectors''} of $\mathbb{K}(G)$.
For sake of completeness we present here a short and self contained proof.

\begin{proof}
From Remark \ref{rem:facescomp} we only need to show that any point of the form $\vec{e}_{\sc}$ is indeed a vertex.
Consider now a simple cycle $\sc $, and recall that vertices of a polytope are characterized by being the only singletons that are faces.
Define:
$$f_{\sc}(\vec{x} ) \coloneqq - \sum_{e \in \sc} x_{e },\quad\text{for all}\quad \vec{x}=(x_e)_{e\in E(G)}\in\R^{E(G)},$$
where we identify $\sc $ with the set of edges in $\sc $.
We will show that $P(G)_{f_{\sc}} = \{\vec{e}_{\sc } \}$.
That is, that $\vec{e}_{\sc }$ is the unique minimizer of $f_{\sc}$ in $P(G)$, concluding the proof.

It is easy to check that $f_{\sc}(\vec{e}_{\sc} )= -1 $.
From \cref{eq:minconv} on page \pageref{eq:minconv}, we only need to establish that any simple cycle $\tilde{\sc}$ that satisfies $f_{\sc}(\vec{e}_{\tilde{\sc}} )\leq -1 $ is equal to $\sc $.
Take a generic simple cycle $\tilde{\sc}$ in $G$ such that $f_{\sc}(\vec{e}_{\tilde{\sc}})\leq -1$.
Then, $ \sum_{e \in \sc} (\vec{e}_{\tilde{\sc}})_{e } \geq 1 $.
Since $\vec{e}_{\tilde{\sc}}$ satisfies the equation $\sum_{e\in E(G)} {(\vec{e}_{\tilde{\sc}})}_{e } = 1$ and has non-negative coordinates, we must have that $(\vec{e}_{\tilde{\sc}})_e = 0$ for all $e\not\in \sc $. 
Thus $\tilde{\sc} \subseteq \sc $ as sets of edges.
However, because both $\tilde{\sc}, \sc$ are simple cycles, we conclude that $\sc = \tilde{\sc}$, as desired.
\end{proof}

\subsection{Dimension of the cycle polytope}
We present the proof of the following result here:

\begin{proposition}[Dimension of the cycle polytope]\label{thm:dimension}
	If $G$ is a strongly connected graph, then the cycle polytope of $G$ has dimension $|E(G)| - | V(G)| $.
\end{proposition}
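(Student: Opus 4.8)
The plan is to compute the dimension of $P(G)$ as the dimension of its affine hull, and to identify $\Aff P(G)$ with (a translate of) the kernel of the incidence matrix $L(G)$ intersected with the hyperplane $\sum_{e} x_e = 1$. First I would observe that every generating vector $\vec{e}_{\sc}$ of $P(G)$ satisfies two families of linear constraints: the \emph{flow constraint} $L(G)\,\vec{e}_{\sc} = 0$ (since a cycle enters and leaves each vertex the same number of times, this is just conservation of flow — one checks it directly on $\vec{e}_{\sc}$ edge by edge at each vertex, handling loops via the special column convention in the definition of $L(G)$), and the \emph{normalization} $\sum_{e \in E(G)} (\vec{e}_{\sc})_e = 1$. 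Hence $\Aff P(G) \subseteq \{ \vec x \in \R^{E(G)} : L(G)\vec x = 0,\ \sum_e x_e = 1\}$, which is an affine subspace of dimension $\dim \ker L(G) - 1$ (the functional $\vec x \mapsto \sum_e x_e$ is not identically zero on $\ker L(G)$, because any nonempty cycle — which exists since $G$ is strongly connected with at least one edge — gives a vector in the kernel with positive coordinate sum). Since $G$ is strongly connected, $\rank L(G) = |V(G)| - 1$, so $\dim\ker L(G) = |E(G)| - |V(G)| + 1$ and the ambient affine space has dimension $|E(G)| - |V(G)|$.

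It then remains to prove the reverse inclusion, i.e.\ that $\Aff P(G)$ is \emph{all} of this affine space, equivalently that the affine span of the simple-cycle vectors has full dimension $|E(G)| - |V(G)|$. The cleanest route is to pass to the cone: by the remark in the excerpt, $\mathbb{K}(G)$ is the non-negative cone of the cycle space, and a standard fact (provable directly) is that $\mathbb{K}(G)$ spans all of $\ker L(G)$ as a \emph{linear} space — indeed, for a strongly connected graph every edge $e = v \to u$ lies on some simple cycle (take $e$ together with a path from $u$ back to $v$, which exists by strong connectivity, and extract a simple cycle through $e$), so the simple-cycle vectors collectively have support covering all of $E(G)$; combining this with the fact that differences of cycle vectors through a common vertex generate the full kernel, one gets $\spn\{\vec e_{\sc}\} = \ker L(G)$. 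From $\spn\{\vec e_{\sc}\} = \ker L(G)$, which has dimension $|E(G)| - |V(G)| + 1$, and the fact that all $\vec e_{\sc}$ lie on the hyperplane $\sum_e x_e = 1$ (an affine hyperplane not through the origin), a short linear-algebra argument shows $\dim \Aff\{\vec e_{\sc}\} = \dim\spn\{\vec e_{\sc}\} - 1 = |E(G)| - |V(G)|$. Together with the upper bound from the previous paragraph this gives the result.

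The main obstacle I anticipate is the claim $\spn\{\vec e_{\sc}\} = \ker L(G)$ — the upper bound ($\subseteq$) is immediate from the flow constraints, but the lower bound ($\supseteq$) is where the real content sits. One must show that the simple cycles generate enough independent vectors; this is exactly the statement that the non-negative cycle cone has full dimension in the cycle space, which is the substance of \cite[Proposition 6]{gleiss2001circuit}. A self-contained argument would proceed by: (i) showing every edge is covered by some simple cycle (using strong connectivity as above); (ii) building, for a fixed spanning structure, an explicit set of $|E(G)| - |V(G)| + 1$ simple cycles whose incidence vectors are linearly independent — for instance, fix a spanning out-tree rooted at some vertex plus a return path, and for each non-tree edge form the fundamental-type cycle it closes; (iii) a rank count. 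The only mild care needed is the bookkeeping with loops and parallel edges, which the incidence-matrix convention in the excerpt already handles. One then transfers from the linear span to the affine span via the normalization hyperplane, as indicated above.
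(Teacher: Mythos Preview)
Your proposal is correct and follows essentially the same route as the paper: both arguments rest on the fact that the cycle cone $\mathbb{K}(G)$ linearly spans a space of dimension $|E(G)|-|V(G)|+1$ (which the paper, like you, cites from \cite{gleiss2001circuit}) and then pass from linear span to affine span via the hyperplane $\sum_e x_e = 1$ not containing the origin. The paper is terser---it simply takes a cycle basis from the reference and notes that linear independence implies affine independence---whereas you frame the upper bound through $\ker L(G)$ and sketch a self-contained fundamental-cycle argument for the spanning claim, but the skeleton is identical.
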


\begin{proof}
In \cite[Proposition 6]{gleiss2001circuit}, it was established that there exists a \emph{``cycle basis''} for $\mathbb{K}(G)$, that is a set of linearly independent vectors $\{\vec{f}_i\}$ in $\mathbb{K}(G)$ that span $\mathbb{K}(G)$.
By rescaling these vectors appropriately, we consider such a basis $\{\vec{f}_i\}_{i=1}^L$ in the hyperplane $\{\sum x_e =1\}$.
Thus we have that $\{\vec{f}_i\}_{i=1}^L\subseteq P(G)$.

From \cite[Eq. (3)]{gleiss2001circuit}, $\mathbb{K}(G)$ spans a space of dimension $L = |E(G)| - |V(G)| + 1$.
Because  the vectors $\{\vec{f}_i\}_{i=1}^L$ are linearly independent, they are affinely independent, and so $$\dim P(G) \geq  (|E(G)| - |V(G)| + 1) -1 =|E(G)| - |V(G)|.$$

Conversely, any affinely independent set $S$ of vectors of $P(G)$ forms a linearly independent set in $\mathbb{K}(G)$, so $|S| \leq |E(G)| - |V(G)| + 1$.
This concludes that $\dim P(G) = |E(G)| - |V(G)| $.
\end{proof}

We now generalize Proposition \ref{thm:dimension} to any graph. We start with the following technical result.

\begin{lemma}\label{prop:abc}
Let $\mathfrak{a}_1\subset \R^A, \mathfrak{a}_2\subset \R^B$ be polytopes such that $ \Aff(\mathfrak{a}_1), \Aff(\mathfrak{a}_2)$ do not contain the zero vector, and let $d(\mathfrak{a}_1), d(\mathfrak{a}_2)$ be their respective dimensions.

Then the dimension of the polytope $\mathfrak{c} = \conv( \mathfrak{a}_1\times \{\vec{0}\} , \{\vec{0}\} \times \mathfrak{a}_2)\subset \R^{A\sqcup B}$ is 
$$d(\mathfrak{c})= d(\mathfrak{a}_1) + d(\mathfrak{a}_2) + 1.$$
\end{lemma}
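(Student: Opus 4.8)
The plan is to prove the two inequalities $d(\mathfrak{c}) \geq d(\mathfrak{a}_1) + d(\mathfrak{a}_2) + 1$ and $d(\mathfrak{c}) \leq d(\mathfrak{a}_1) + d(\mathfrak{a}_2) + 1$ separately, exploiting the fact that $\Aff(\mathfrak{a}_1)$ and $\Aff(\mathfrak{a}_2)$ miss the origin.

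\medskip

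\noindent\textbf{Lower bound.} First I would pick an affinely independent set $\{\vec{p}_0, \dots, \vec{p}_{d_1}\} \subseteq \mathfrak{a}_1$ witnessing $d(\mathfrak{a}_1) = d_1$, and similarly $\{\vec{q}_0, \dots, \vec{q}_{d_2}\} \subseteq \mathfrak{a}_2$. Consider the $d_1 + d_2 + 2$ points $\{(\vec{p}_i, \vec 0)\}_{i=0}^{d_1} \cup \{(\vec 0, \vec q_j)\}_{j=0}^{d_2}$ inside $\mathfrak{c}$. I claim they are affinely independent, which gives $d(\mathfrak{c}) \geq d_1 + d_2 + 1$. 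To check this, suppose $\sum_i \alpha_i (\vec p_i, \vec 0) + \sum_j \beta_j (\vec 0, \vec q_j) = \vec 0$ with $\sum_i \alpha_i + \sum_j \beta_j = 0$. Looking at the $\R^A$-block gives $\sum_i \alpha_i \vec p_i = \vec 0$; if $\sum_i \alpha_i \neq 0$, rescaling would exhibit $\vec 0 \in \Aff(\mathfrak{a}_1)$, a contradiction, so $\sum_i \alpha_i = 0$, and then affine independence of the $\vec p_i$ forces all $\alpha_i = 0$; symmetrically all $\beta_j = 0$. Hence the points are affinely independent.

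\medskip

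\noindent\textbf{Upper bound.} Here I would argue that $\Aff(\mathfrak{c}) \subseteq (\Aff(\mathfrak{a}_1) \times \{\vec 0\}) + (\{\vec 0\} \times \Aff(\mathfrak{a}_2))$ viewed inside $\R^{A \sqcup B}$, or more precisely bound $\dim \Aff(\mathfrak{c})$ by $\dim \Aff(\mathfrak{a}_1) + \dim \Aff(\mathfrak{a}_2) + 1$. A clean way: translate so that affine spans become linear. Since $\Aff(\mathfrak{a}_1)$ does not contain $\vec 0$, the linear span $\spn(\mathfrak{a}_1)$ has dimension exactly $d_1 + 1$ (a set of affinely independent points that does not affinely span the origin becomes linearly independent together with no extra vector needed — indeed $\dim \spn(\mathfrak{a}_1) = d(\mathfrak{a}_1) + 1$ precisely because $\vec 0 \notin \Aff(\mathfrak{a}_1)$). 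Every generator $\vec e_{\sc}$-type point, i.e. every point of $\mathfrak{c}$, is a convex combination of points from $\mathfrak{a}_1 \times \{\vec 0\}$ and $\{\vec 0\} \times \mathfrak{a}_2$, hence lies in $\spn(\mathfrak{a}_1) \times \{\vec 0\} \oplus \{\vec 0\} \times \spn(\mathfrak{a}_2)$, a linear space of dimension $(d_1 + 1) + (d_2 + 1)$. Therefore $\dim \spn(\mathfrak{c}) \leq d_1 + d_2 + 2$. It remains to pass from $\spn$ back to $\Aff$: since $\Aff(\mathfrak{a}_1)$ and $\Aff(\mathfrak{a}_2)$ avoid $\vec 0$, any convex combination with \emph{both} groups of points present has nonzero coordinates in both blocks, so $\vec 0 \notin \Aff(\mathfrak{c})$, and thus $d(\mathfrak{c}) = \dim \Aff(\mathfrak{c}) = \dim \spn(\mathfrak{c}) - 1 \leq d_1 + d_2 + 1$.

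\medskip

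\noindent\textbf{Main obstacle.} The only delicate point is the bookkeeping around the identity $\dim \spn(S) = d(S) + 1$ when $\vec 0 \notin \Aff(S)$, and making sure the same hypothesis (origin avoided) is what simultaneously powers both bounds. I would state and use once, as a small sublemma, that for a polytope $\mathfrak{a}$ with $\vec 0 \notin \Aff(\mathfrak{a})$ one has $\spn(\mathfrak{a}) = \Aff(\mathfrak{a}) \oplus \R\vec{w}$ for any $\vec w \in \mathfrak{a}$ — equivalently $\dim\spn(\mathfrak a)=d(\mathfrak a)+1$ — and dually that a convex combination of points drawn from two such polytopes placed in complementary coordinate blocks is never the origin. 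Everything else is the routine linear-algebra splitting by coordinate blocks sketched above.
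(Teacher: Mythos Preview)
Your argument is correct and matches the paper's proof almost step for step: both directions hinge on the equivalence of affine and linear independence inside an affine subspace missing the origin, and the upper bound is obtained by bounding $\dim\spn(\mathfrak{c})\leq (d_1+1)+(d_2+1)$ and then showing $\vec 0\notin\Aff(\mathfrak{c})$. One small slip to fix: in the upper bound you justify $\vec 0\notin\Aff(\mathfrak{c})$ by talking about \emph{convex} combinations, but $\Aff(\mathfrak{c})$ consists of affine combinations; the correct argument is exactly the block-splitting you already used in the lower bound (if $\sum_i\alpha_i(\vec p_i,\vec 0)+\sum_j\beta_j(\vec 0,\vec q_j)=\vec 0$ with $\sum\alpha_i+\sum\beta_j=1$, then at least one partial sum is nonzero and rescaling places $\vec 0$ in $\Aff(\mathfrak{a}_1)$ or $\Aff(\mathfrak{a}_2)$), which is precisely what the paper does.
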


This result is tipically regarded as a well known fact in convex geometry.
We present a proof for sake of completeness.

\begin{proof}
In this proof, for sake of simplicity, we will identify $\mathfrak{a}_1 \subset \mathbb{R}^A$ and $\mathfrak{a}_2 \subset \mathbb{R}^B$ with $\mathfrak{a}_1\times \{\vec{0}\}$ and $ \{\vec{0}\}\times\mathfrak{a}_2$, respectively.
In particular, we will refer to points $\vec{x}\in \mathfrak{a}_i $ for $i=1, 2$ as their suitable extensions $(\vec{x}, \vec{0})$ or $( \vec{0}, \vec{x})$, respectively, in $\R^{A\sqcup B} $ without further notice.

We start with a lower bound for $d(\mathfrak{c})$.
Consider affinely independent sets 
\begin{equation*}
\mathcal{V}_1 = \{\vec{v}_1^{(1)}, \dots, \vec{v}_{d(\mathfrak{a}_1)+1}^{(1)} \} \quad \text{and} \quad \mathcal{V}_2 = \{ \vec{v}_1^{(2)}, \dots, \vec{v}_{d(\mathfrak{a}_2)+1}^{(2)} \} 
\end{equation*} 
in $\mathfrak{a}_1$ and $\mathfrak{a}_2$, respectively. 

Given an affine space $\mathcal A$ not containing the origin, and a set of vectors $S\subset \mathcal A$, it is easily seen that $S$ is linearly independent if and only if it is affinely indepentent.
Therefore, each of the sets $\mathcal{V}_1, \mathcal{V}_2 $ is linarly independent, because $\mathcal{V}_i \subseteq \Aff (\mathfrak{a}_i )$ and $\Aff (\mathfrak{a}_i )$ does not contain $\vec{0}$ for $i=1, 2$.
Since $\spn (\mathfrak{a}_1 ) \perp \spn (\mathfrak{a}_2 )$, the union $\mathcal{V}_1 \cup \mathcal{V}_2$ is also a linearly independent set, and so also affinely independent.
Observe that we found an affinely independent set $\mathcal V_1 \cup \mathcal V_2$ with $d(\mathfrak{a}_1) + d(\mathfrak{a}_2) + 2$ many vectors in $\mathfrak{c}$.
This gives us the lower bound $d( \mathfrak{c}) \geq d(\mathfrak{a}_1) + d(\mathfrak{a}_2) + 1$.

For an upper bound, note that $\Aff (\mathfrak{c}) \subseteq \spn (\mathfrak{c})$, and that 
$$\dim (\spn (\mathfrak{c})) \leq \dim (\spn (\mathfrak{a}_1)) + \dim (\spn (\mathfrak{a}_2)) = d(\mathfrak{a}_1) + d(\mathfrak{a}_2) + 2\, . $$
We now prove that $0 \not\in \Aff (\mathfrak{c}) $ by contradiction.
Assume otherwise that we have $\sum_i \alpha_i \vec{a}_i + \sum_j \beta_j \vec{b}_j = 0$, where $\vec{a}_i \in \mathfrak{a}_1$, $\vec{b}_j \in \mathfrak{a}_2$ and $\sum_i\alpha_i + \sum_j \beta_j=1$.
But we have that $\sum_i \alpha_i \vec{a}_i  \in \R^A$, and $ - \sum_j \beta_j \vec{b}_j \in \R^B$, so  $\sum_i  \alpha_i \vec{a}_i = - \sum_j \beta_j \vec{b}_j \in \R^A \cap \R^B = \{ \vec{0} \}$.
Because $\sum_i\alpha_i + \sum_j \beta_j=1$, without loss of generality we can assume that $\sum_i\alpha_i\neq 0$.
Then we have $\frac{\sum_i \alpha_i \vec{a}_i}{\sum_i \alpha_i } = 0 \in \Aff (\mathfrak{a}_1)$, a contradiction.

Since $0\in \spn (\mathfrak{c})$, we conclude that $\Aff (\mathfrak{c}) \neq \spn \mathfrak{c}$. It follows that 
$$ \dim \left(\Aff (\mathfrak{c}) \right) \leq d(\mathfrak{a}_1) + d(\mathfrak{a}_2) + 1\, . $$
This concludes the proof.
\end{proof}

With the help of Lemma \ref{prop:abc}, we can generalize Proposition \ref{thm:dimension} to the cycle polytope of any graph:
We say that a graph $G=(V, E)$ is \textit{full} if any edge $e \in E$ is part of a cycle of $G$.
It is easy to see that if $G$ is not full, then $P(G) = P(H)$, where $H\subseteq G$ is the largest full subgraph of $G$.
Equivalently, $H$ is obtained from $G$ by removing all edges from $G$ that are not part of a cycle.

If $G$ is a full graph, there are no \textit{bridges}, that is an edge $e$ connecting two distinct strongly connected components.
Hence, we can decompose $G$ as the disjoint union of strongly connected components and a set of isolated vertices $V'$: $G = H_1 \sqcup \dots \sqcup H_k \sqcup V'$.
It can be seen that $P(G) = \conv \{P(H_i)| i=1, \dots k\}$, where we identify $P(H_i)$ with its canonical image in $\R^{E(G)}$.
Noting that $P(H_i)\subseteq\Aff (P(H_i))\subseteq\R^{E(H_i)} $ and that $\Aff (P(H_i))$ does not contain the origin for any $i=1, \dots , k$, from Lemma \ref{prop:abc} we have that
$$\dim P(G) = k - 1 + \sum_i \dim P(H_i ) = k - 1 + |E| - |V\setminus V'| =  |E| - |V| + |V'| + k -1  \, . $$

\begin{proposition}\label{thm:dimfullgr}
If $G$ is a directed multigraph and $H\subseteq G$ its largest full subgraph, then the dimension of the polytope $P(G)$ is
$$\dim P(G) = |E(H)| - |V(G)| + |\{ \text{ connected components of } H \} |- 1. $$
\end{proposition}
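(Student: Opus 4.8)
The plan is to reduce Proposition~\ref{thm:dimfullgr} to the three facts already assembled in the excerpt: (i) $P(G)=P(H)$ when $H$ is the largest full subgraph of $G$, because every vertex $\vec e_{\sc}$ of $P(G)$ is supported on edges of a simple cycle, which all lie in $H$; (ii) Proposition~\ref{thm:dimension}, giving $\dim P(K)=|E(K)|-|V(K)|$ for any strongly connected graph $K$; and (iii) Lemma~\ref{prop:abc}, which computes the dimension of a convex hull of two polytopes lying in complementary coordinate subspaces whose affine hulls avoid the origin. The discussion immediately preceding the statement already carries out essentially all of this; the proof is really just a matter of organising that discussion into a clean induction and reading off the final count.

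First I would observe that it suffices to prove the formula for a full graph $G=H$, since $P(G)=P(H)$, $|E(H)|$ is the same object, $|V(G)|=|V(H)|$ after we note that isolated vertices of $H$ contribute $0$ to $|E(H)|$ but are counted in $|V(G)|$ and in the connected-component count (each isolated vertex being its own connected component of $H$ with no edges and contributing a point only if... — here one must be a little careful: an isolated vertex supports no cycle, so it contributes no vertex $\vec e_{\sc}$ to $P(H)$, hence should be excluded from the component count; so I would phrase Lemma-level bookkeeping so that ``connected components of $H$'' means components that contain at least one edge, equivalently at least one cycle, and double-check this matches the statement's intent). Then decompose the full graph into its strongly connected components $H_1,\dots,H_k$ together with the set $V'$ of isolated vertices, so that $E(H)=\bigsqcup_i E(H_i)$ and each $E(H_i)$ occupies its own block of coordinates in $\R^{E(H)}$. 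Since every simple cycle of $H$ lies entirely within one $H_i$, we get $P(H)=\conv\{P(H_1),\dots,P(H_k)\}$, with each $P(H_i)$ embedded in the coordinate subspace $\R^{E(H_i)}$.

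Next I would run the induction on $k$ using Lemma~\ref{prop:abc}. The base case $k=1$ is exactly Proposition~\ref{thm:dimension}. For the inductive step, write $P(H)=\conv(P(H_1),\conv\{P(H_2),\dots,P(H_k)\})$, note that $\Aff(P(H_i))\subseteq\R^{E(H_i)}$ and that $\Aff(P(H_i))$ misses the origin — this last point holds because every $\vec e_{\sc}$ satisfies $\sum_e x_e=1$, an affine relation preserved under affine combinations, so the all-zero vector is not in the affine hull — and apply Lemma~\ref{prop:abc} to conclude $\dim P(H)=\dim P(H_1)+\dim(\conv\{P(H_2),\dots,P(H_k)\})+1$. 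Unwinding the recursion gives
$$\dim P(H)=\sum_{i=1}^k \dim P(H_i) + (k-1) = \sum_{i=1}^k\bigl(|E(H_i)|-|V(H_i)|\bigr) + k - 1 = |E(H)| - \bigl|V(H)\setminus V'\bigr| + k - 1.$$
Finally, rewriting $|V(H)\setminus V'| = |V(G)| - |V'|$ and recognising that $k + |V'|$ is exactly the number of connected components of $H$ (the $k$ strongly connected components, each a connected component since $H$ is bridgeless, plus the $|V'|$ isolated vertices as their own components), we get $\dim P(G)=|E(H)|-|V(G)|+|\{\text{connected components of }H\}|-1$, as claimed.

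The only real obstacle is the bookkeeping around isolated vertices and the precise meaning of ``connected components'': depending on whether $V'$ is meant to be counted among the components, the $-1$ and the term ``isolated vertices'' shift around, and one must make sure the statement's component count and the decomposition used in Lemma~\ref{prop:abc} are aligned (Lemma~\ref{prop:abc} only lets us glue polytopes that are nonempty and whose affine hulls avoid the origin, so $V'$ cannot be fed into it directly and must be handled purely as a change in the vertex count of $G$). Everything else — the decomposition $P(H)=\conv\{P(H_i)\}$, the orthogonality of the coordinate blocks, the origin-avoidance from $\sum_e x_e = 1$ — is immediate from the definitions and has already been spelled out in the paragraph preceding the proposition, so the write-up is short.
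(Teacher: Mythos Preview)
Your proposal is correct and follows essentially the same route as the paper: reduce to the largest full subgraph, decompose it into strongly connected pieces $H_1,\dots,H_k$ plus isolated vertices $V'$, apply Proposition~\ref{thm:dimension} to each $H_i$, and glue via Lemma~\ref{prop:abc} to obtain $\dim P(G)=|E(H)|-|V\setminus V'|+k-1$. Your final reading of the bookkeeping is the right one---isolated vertices \emph{are} counted among the connected components of $H$, so that $|\{\text{components of }H\}|=k+|V'|$---and you should discard the earlier hedge suggesting they might be excluded.
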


\subsection{Faces of the cycle polytope}
We now focus on the faces of a cycle polytope $P(G)$. We prove two results: in Proposition \ref{thm:faces} we describe the equations that define $P(G)$, then in Theorem \ref{cor:facestruct} we find a bijection between faces of $P(G)$ and the subgraphs of $G$ that are full.
\begin{proposition}\label{thm:faces}
Let $G$ be a directed graph.
The polytope $P(G)$ is given by
$$ P(G) = \left\{ \vec{x}\in {[0,1]}^{E(G)} \Bigg| \sum_{e\in E(G)} x_e = 1\, , \, \, \sum_{\st (e) = v} x_e = \sum_{\ar (e) = v} x_e \, , \forall v \in V(G)\right\} \, .$$
\end{proposition}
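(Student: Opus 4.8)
The plan is to show the two inclusions between $P(G)$ and the right-hand side polytope, which I will call $Q(G)$. The inclusion $P(G)\subseteq Q(G)$ is the easy direction: I would first check that each generating vertex $\vec{e}_{\sc}$ of $P(G)$ lies in $Q(G)$, and then note that $Q(G)$, being cut out by linear equalities and box constraints, is convex, so it contains the convex hull. For a simple cycle $\sc=(e_1,\dots,e_\ell)$, every coordinate of $\vec{e}_{\sc}$ is $n_e(\sc)/\ell\in[0,1]$, and $\sum_e (\vec{e}_{\sc})_e = \ell/\ell = 1$. For the flow-conservation equations: fix a vertex $v$; since $\sc$ is a closed walk, each time it enters $v$ it must leave $v$, so the number of edges of $\sc$ (counted with multiplicity) starting at $v$ equals the number arriving at $v$; dividing by $\ell$ gives $\sum_{\st(e)=v} (\vec{e}_{\sc})_e = \sum_{\ar(e)=v}(\vec{e}_{\sc})_e$. (One should handle loops consistently with the convention fixed for the incidence matrix, but a loop at $v$ contributes equally to both sides, so it is harmless.)

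The substantive direction is $Q(G)\subseteq P(G)$. I would take $\vec{x}\in Q(G)$ and produce it as a convex combination of the $\vec{e}_{\sc}$. The natural approach is a flow-decomposition argument: $\vec{x}$ is a nonnegative circulation on $G$ (the conservation equations say exactly that the net flow at each vertex is zero), so by the standard cycle-decomposition of circulations one can write $\vec{x}=\sum_{j}\lambda_j \vec{c}^{(j)}$ where each $\vec{c}^{(j)}$ is the indicator vector of a simple cycle $\sc_j$ and $\lambda_j\ge 0$. Concretely, I would argue by induction on the size of the support of $\vec{x}$: if $\vec{x}\neq \vec{0}$, then because of conservation the support contains a simple cycle $\sc$ (follow outgoing edges from any vertex in the support; conservation guarantees one can always continue, so eventually a vertex repeats, yielding a simple cycle); subtract $\mu\,\One_{\sc}$ for $\mu=\min_{e\in\sc} x_e>0$, which keeps conservation and nonnegativity while strictly reducing the support; iterate. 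This expresses $\vec x = \sum_j \mu_j \One_{\sc_j}$ with $\mu_j>0$. Now rewrite each term as $\mu_j \One_{\sc_j} = (\mu_j |\sc_j|)\,\vec{e}_{\sc_j}$, so $\vec{x}=\sum_j (\mu_j|\sc_j|)\vec{e}_{\sc_j}$. Finally, the normalization $\sum_e x_e = 1$ forces $\sum_j \mu_j|\sc_j| = \sum_j \sum_{e\in\sc_j}\mu_j = \sum_e x_e = 1$, so the coefficients $\mu_j|\sc_j|$ are nonnegative and sum to $1$: this exhibits $\vec{x}$ as a convex combination of vertices of $P(G)$, hence $\vec{x}\in P(G)$.

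The main obstacle is making the extraction of a simple cycle from the support of a nonnegative circulation fully rigorous — in particular, verifying that conservation really does let the greedy walk continue (so that it never gets stuck at a vertex with no available outgoing edge in the support) and treating parallel edges and loops correctly in the multigraph setting. This is a routine but slightly fiddly finiteness/termination argument; everything else is bookkeeping. I would phrase the induction carefully on $|\{e : x_e>0\}|$ to make the termination transparent, and explicitly remark that the convention for loops in $L(G)$ is consistent with both the definition of $\vec{e}_{\sc}$ and the conservation equations so that loops cause no trouble.
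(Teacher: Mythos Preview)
Your proposal is correct and self-contained, but it takes a more direct route than the paper. The paper's proof is a two-line reduction: it cites \cite[Proposition 1]{gleiss2001circuit} for the identity $\mathbb{K}(G)=\{\vec{x}\ge 0\}\cap\ker L(G)$ (i.e.\ the cycle cone is exactly the set of nonnegative circulations), and then observes that $P(G)=\mathbb{K}(G)\cap\{\sum_e x_e=1\}$, which immediately gives the description. Your flow-decomposition argument is precisely the standard proof of that cited proposition, so you are effectively reproving the external lemma inline rather than quoting it. The upside of your approach is that it is elementary and requires no outside reference; the paper's version is shorter but outsources the substantive step. Both routes hinge on the same underlying fact (every nonnegative circulation decomposes into simple cycles), so there is no genuinely new idea on either side.
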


\begin{proof}
Consider $L(G)$ the incidence matrix of the graph $G$ and note that the equation $L(G) \vec{x} = \vec{0}$ is equivalent to $\sum_{\st (e) = v} x_e = \sum_{\ar (e) = v} x_e \, , \forall v \in V(G)$.
From \cite[Proposition 1]{gleiss2001circuit}, we have that $\mathbb{K}(G)= \{\vec{x} \geq 0 \} \cap \ker L(G) $.
It is immediate to see that $P(G) = \mathbb{K}(G) \cap \{\sum_i x_i =  1\}$, and so the result follows.
\end{proof}

Recall that a subgraph $H=(V, E')$ of a graph $G$ is called a \textit{full subgraph} if any edge $e \in E'$ is part of a cycle of $H$.

\begin{theorem}
\label{cor:facestruct}
The face poset of $P(G)$ is isomorphic to the poset of non-empty full subgraphs of $G$ according to the following identification:
$$H \mapsto P(G)_H \coloneqq \{\vec{x}\in P(G) | x_e = 0 \text{ for } e\not\in E(H) \} \, .$$
Further, if we identify $P(H)$ with its image under the inclusion $\R^{E(H)} \hookrightarrow \R^{E(G)}$, we have that $P(H) =  P(G)_H$.

In particular, $\dim (P(G)_H ) = |E(H)| - |V| + |\{ \text{ connected components of } H \} |- 1$.
\end{theorem}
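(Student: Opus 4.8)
\textbf{Proof proposal for Theorem~\ref{cor:facestruct}.}
The plan is to establish the claimed bijection in two directions and then to read off the dimension formula from Proposition~\ref{thm:dimfullgr}. First I would show that for every non-empty full subgraph $H\subseteq G$ the set $P(G)_H$ is indeed a face of $P(G)$. To this end, take the linear form $f_H(\vec{x})\coloneqq -\sum_{e\in E(H)} x_e$ on $\R^{E(G)}$. Using Remark~\ref{rem:facescomp}, it suffices to evaluate $f_H$ on the vertices $\vec{e}_{\sc}$ of $P(G)$, which by Proposition~\ref{prop:vertices} range over simple cycles $\sc$ of $G$. Since $\sum_{e\in E(G)}(\vec{e}_{\sc})_e=1$ and the coordinates are non-negative, $f_H(\vec{e}_{\sc})\geq -1$ with equality exactly when $\sc$ uses only edges of $H$, i.e.\ when $\sc$ is a simple cycle contained in $H$. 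As $H$ is full and non-empty it contains at least one simple cycle, so the minimum value $-1$ is attained, and by \cref{eq:minconv} the minimizing face is $\conv\{\vec{e}_{\sc}\mid \sc\text{ a simple cycle of }H\}=P(H)$, viewed inside $\R^{E(G)}$. This simultaneously identifies $P(G)_H=P(H)$ and proves $P(G)_H$ is a face.

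Next I would show every face of $P(G)$ arises this way and that the correspondence is order-preserving and injective. Given a face $\mathfrak{f}=P(G)_g$ for a linear form $g$, by Remark~\ref{rem:facescomp} $\mathfrak{f}$ is the convex hull of those vertices $\vec{e}_{\sc}$ on which $g$ is minimal; let $\mathcal{C}$ be the set of such simple cycles and let $H$ be the subgraph of $G$ whose edge set is $\bigcup_{\sc\in\mathcal{C}} E(\sc)$ (with the incident vertices). Every edge of $H$ lies on some $\sc\in\mathcal{C}$, hence on a cycle of $H$, so $H$ is full; and $\mathcal{C}$ is exactly the set of \emph{all} simple cycles of $G$ contained in $H$ (a simple cycle using only edges that appear among the $\sc\in\mathcal C$ need not a priori be one of them, so here is the one spot needing a short argument: if $\sc'$ is a simple cycle with $E(\sc')\subseteq E(H)$ then $\vec e_{\sc'}\in P(G)_H$, and since by the previous paragraph $P(G)_H$ is itself a face on which $f_H$ is minimal, and $\mathfrak f\subseteq P(G)_H$ forces equality of the two faces once one checks $P(G)_H\subseteq\mathfrak f$ — which follows because $g$ is constant on $\mathfrak f$ and $H$'s cycles are precisely those hit). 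Thus $\mathfrak{f}=P(H)=P(G)_H$. Injectivity of $H\mapsto P(G)_H$ is immediate since $E(H)=\{e\mid x_e>0\text{ for some }\vec{x}\in P(G)_H\}$ recovers $H$ from its face (each edge of a full $H$ lies on a simple cycle, whose indicator vertex has that coordinate positive). Monotonicity in both directions is clear: $H_1\subseteq H_2$ gives $P(G)_{H_1}\subseteq P(G)_{H_2}$, and conversely containment of faces forces containment of the recovered edge sets.

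Finally, the dimension statement is just Proposition~\ref{thm:dimfullgr} applied to the graph $H$: since $H$ is full it equals its own largest full subgraph, so
$$\dim(P(G)_H)=\dim P(H)=|E(H)|-|V(H)|+|\{\text{connected components of }H\}|-1.$$
To match the displayed formula, which has $|V|=|V(G)|$ rather than $|V(H)|$, I would note that a full subgraph $H$ may omit vertices of $G$; writing $V(H)=V(G)\setminus V''$ where $V''$ are the omitted vertices, each omitted vertex is an extra isolated component from the point of view of $G$'s vertex set, so $-|V(H)|+|\{\text{components of }H\}|=-|V(G)|+|\{\text{components of }H\text{ as a subgraph spanning }V(G)\}|$, which is the intended reading of the statement (components counted among $V(G)$, with isolated vertices allowed). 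The main obstacle is the surjectivity step: carefully arguing that the subgraph $H$ extracted from an arbitrary minimizing face contains \emph{no} simple cycles beyond those already in $\mathcal{C}$, i.e.\ that $P(G)_H$ does not accidentally contain extra vertices not in $\mathfrak f$; the clean way around it is to prove $P(G)_H=\mathfrak f$ by double inclusion using the form $f_H$ from the first paragraph, so that the faces coincide by construction and the cycle bookkeeping never has to be done by hand.
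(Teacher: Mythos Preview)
Your first paragraph is clean and correct: the linear form $f_H$ does the job, and the identity $P(G)_H=P(H)$ falls out because the minimum $f_H=-1$ is equivalent to $\sum_{e\notin E(H)}x_e=0$, hence to $x_e=0$ for $e\notin E(H)$. Injectivity, monotonicity, and the dimension computation are also fine, and your remark about $|V|$ versus $|V(H)|$ correctly identifies the intended reading (isolated vertices count as components).

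The genuine gap is the surjectivity step, and your own proposed patch is circular. Given an arbitrary face $\mathfrak f=P(G)_g$ with minimizing cycle set $\mathcal C$, you set $E(H)=\bigcup_{\sc\in\mathcal C}E(\sc)$ and need $P(G)_H\subseteq\mathfrak f$, i.e.\ that every simple cycle $\sc'$ with $E(\sc')\subseteq E(H)$ already lies in $\mathcal C$. Your justification ``$g$ is constant on $\mathfrak f$ and $H$'s cycles are precisely those hit'' either asserts exactly this (circular) or asserts only that every edge of $H$ is hit by some $\sc\in\mathcal C$ (true by construction, but not enough: a cycle $\sc'$ in $H$ can weave together edges from several $\sc\in\mathcal C$ and there is no a priori reason $g(\vec e_{\sc'})$ should equal the minimum). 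Invoking $f_H$ again does not help: knowing that $P(G)_H$ is the minimizing face of $f_H$ gives $\mathfrak f\subseteq P(G)_H$, not the reverse.

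The paper sidesteps this entirely by working from the $H$-representation in Proposition~\ref{thm:faces}: since $P(G)$ is cut out by linear equalities together with the inequalities $x_e\geq 0$, every face is obtained by turning some subset of these inequalities into equalities, so every face is of the form $P(G)_H$ for some subgraph $H$; one then passes to the largest full subgraph without changing the face. If you want to repair your $V$-representation route, the cleanest fix also goes through Proposition~\ref{thm:faces}: take $\vec x$ in the relative interior of $\mathfrak f$ (a strict convex combination of the $\vec e_{\sc}$, $\sc\in\mathcal C$), so that $x_e>0$ exactly for $e\in E(H)$; since the minimal face of a polytope of the form $\{A\vec y=b,\ \vec y\geq 0\}$ containing a point $\vec x$ is $\{\vec y\in P:\ y_e=0\text{ whenever }x_e=0\}$, that minimal face is $P(G)_H$, while the minimal face containing a relative interior point of $\mathfrak f$ is $\mathfrak f$ itself, hence $\mathfrak f=P(G)_H$. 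Either way, the $H$-description of $P(G)$ is what closes the gap.
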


\begin{figure}[h]
\includegraphics[scale=0.55]{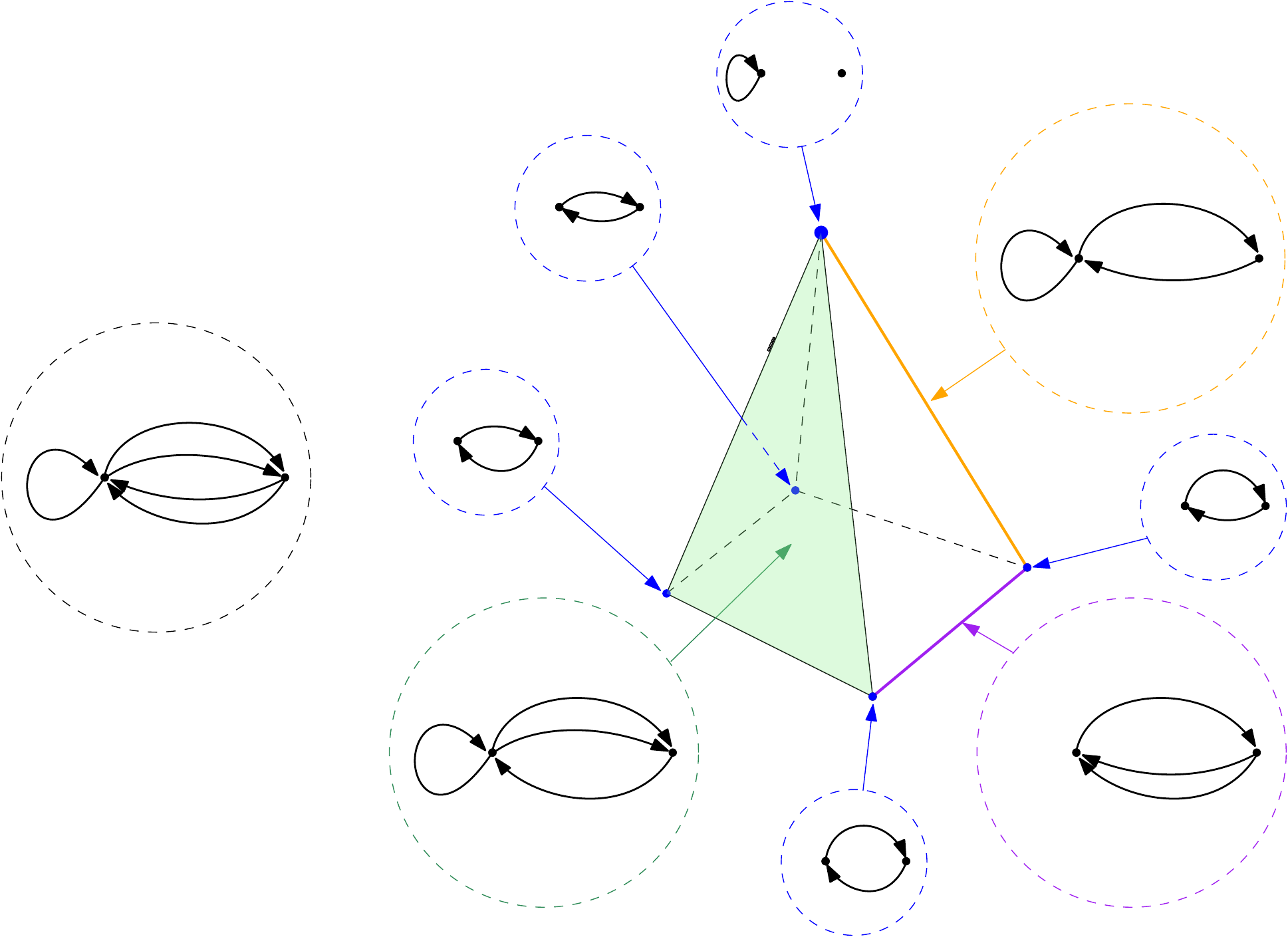}
\caption{The face structure of the cycle polytope of a graph. On the left-hand side of the picture (inside the dashed black ball) we have a graph $G$ with two vertices and five edges. On the right-hand side, we draw the associated cycle polytope $P(G)$ that is a pyramid with squared base. The blue dashed balls correspond to the simple cycles corresponding to the five vertices of the polytope. We also underline the relation between two edges of the polytope (in purple and orange respectively) and a face (in green) and the corresponding full subgraphs. Note that, for example, the graph corresponding to the green face is just the union of the three graphs corresponding to the vertices of that face.  \label{fig:facestruct}}
\end{figure}

\begin{proof}
From Proposition \ref{thm:faces}, a face of $P(G)$ is given by setting some of the inequalities of $\vec{x} \geq 0$ as equalities.
So, a face is of the form $P(G)_H$ for some subgraph $H=(V(G), E(H))$, where $E(G)\setminus E(H)$ corresponds to the inequalities of $\vec{x} \geq 0 $ that become equalities.
It is immediate to observe that the identification $\R^{E(H)} \hookrightarrow \R^{E(G)}$ gives us that $P(H) =  P(G)_H$.

We show that it suffices to take $H$ a full subgraph: consider an edge $e_0 = v \to w$ in $H$ that is not contained in any cycle in $H$.
Then $(\vec{e}_{\sc})_{e_0} = 0 $ for any simple cycle $\sc$  in $H$, and so  $x_{e_0} = 0 $ for any point $\vec{x} \in P(H) = P(G)_H$.
It follows that $ P(G)_{H} =  P(G)_{H\setminus e_0}$.

Conversely, we can see that if $H_1\neq H_2$ are two full subgraphs of $G$, then $P(G)_{H_1}~\neq~P(G)_{H_2}$, that is, $H_1, H_2$ correspond to two different faces of $P(G)$.
Indeed, without loss of generality we can assume that there is an edge $e \in E(H_1)\setminus E(H_2)$.
This edge is, by hypothesis, contained in a simple cycle $\sc$, so $\vec{e}_{\sc}\in P(G)_{H_1}\setminus P(G)_{H_2}$, so $P(G)_{H_1}~\neq~P(G)_{H_2}$.

It is clear that if $H_1\subseteq H_2$ then $P(G)_{H_1}\subseteq P(G)_{H_2}$, so we have that the identification $H \mapsto P(G)_H$ preserves the poset structure.
Finally, we obtained the dimension of $P(G)_H = P(H)$ in Proposition \ref{thm:dimfullgr}.
\end{proof}

\begin{example}[Face structure of a specific cycle polytope]
Consider the graph $G$ given on the left-hand side of \cref{fig:facestruct}, that has two vertices and five edges.
It follows that the corresponding cycle polytope has dimension three, and its face structure is partially depicted in the right-hand side \cref{fig:facestruct}.

In fact, from Theorem \ref{cor:facestruct}, to each face of the polytope we can associate a full subgraph of $G$.
Some of these correspondences are highlighted in \cref{fig:facestruct} and described in its caption.
\end{example}

Given a simple cycle $\sc $ in a graph $G$, a path $P$ is a \textit{chord} of $\sc $ if it is edge-disjoint from $\sc $ and it starts and arrives at vertices of $\sc $.
In particular, given two simple cycles sharing a vertex, any one of them forms a chord of the other.

\begin{remark}[The skeleton of the polytope $P(G)$]
We want to characterize the pairs of vertices of $P(G)$ that are connected by an edge.
The structure behind this is usually called the \emph{skeleton of the polytope}.
Suppose that we are given two vertices of the polytope $P(G)$, $\vec{e}_{\sc_1}, \vec{e}_{\sc_2}$ corresponding to the simple cycles $\sc_1, \sc_2$ of the graph $G$, according to Proposition \ref{prop:vertices}.

With the description of the faces in Theorem \ref{cor:facestruct}, we have that a face $P(G)_H$ is an edge when it has dimension one, that is 
$$|E(H)| - |V(G)| + |\{ \text{ connected components of } H \} |- 1 = 1 \, . $$
This happens if and only if the undirected version of $H$ is a forest with two edges added.

Because $H$ is full, each connected component must contain a cycle, so $H$ has either one or two connected components.
Hence, it results either from the union of two vertex-disjoint simple cycles, or from the union of a simple cycle and one of its chords.
Equivalently, $\vec{e}_{\sc_1}, \vec{e}_{\sc_2}$ are connected with an edge when $\sc_1 \setminus \sc_2$ forms a unique chord of $\sc_2$, or when $\sc_1, \sc_2$ are vertex-disjoint.

For instance, in \cref{fig:facestruct}, there are two pairs of vertices of $P(G)$ that are not connected, and each pair corresponds to two cycles $\sc_1, \sc_2$ such that $\sc_1 \setminus \sc_2$ forms two chords of $\sc_2$.
\end{remark}

\begin{remark}[Computing the volume of $P(G)$]
The problem of finding the volume of a polytope is a classical one in convex geometry.
We propose an algorithmic approach that uses the face description of $P(G)$ in Theorem \ref{cor:facestruct} and the following facts:

\begin{itemize}

\item Let $A$ be a polytope and $v$ a point in space. 
If $v \not\in \Aff(A)$, then
$$ \vol(\cnv(A \cup \{ v \} ) ) = \vol(A) \dist(v , \Aff(A))\frac{1}{\dim A + 1}\, . $$

\item If $v  $ is a vertex of the polytope $\mathfrak{p}$ of dimension $d$, then we have the following decomposition of the polytope $\mathfrak{p}$: 
$$\mathfrak{p} = \bigcup_{v\not\in\mathfrak{q} \subsetneq \mathfrak{p}  } \conv( \mathfrak{q}\cup \{ v \} )\, , $$ 
where the union runs over all high-dimensional faces $\mathfrak{q}$ that do not contain the vertex $v$.
This decomposition is such that the intersection of each pair of blocks has volume zero, and each block has a non-zero $d-1$ dimensional volume.
\end{itemize}

If $\sc $ is a simple cycle of $G$, the following decomposition holds:
$$P(G) = \bigcup_{\sc \not\subseteq H \subsetneq G  } \conv(\vec{e}_{\sc} , P(G)_H )\, , $$ 
where the union runs over all maximal full proper subgraphs of $G$ that do not contain $\sc$.

Hence, we obtain the volume of $P(G)$ as follows:
$$\vol ( P(G) ) = \sum_{\sc \not\subseteq H \subsetneq G  } \conv(\vec{e}_{\sc} , P(G)_{G\setminus e} ) =\sum_{\sc \not\subseteq H \subsetneq G  } \frac{\vol(P(H)) \dist(\vec{e}_{\sc} , \Aff (P(G)_H))}{\dim P(G) + 1},$$
 where the sum runs over all maximal full proper subgraphs of $G$ that do not contain $\sc $.
This gives us a recursive way of computing the volume $ \vol(P(G)) $ by computing the volume of cycle polytopes of smaller graphs.
We have unfortunately not been able to find a general formula for $\vol P(G)$, and leave this as an open problem.
\end{remark}

\section{The feasible region \texorpdfstring{$P_k$}{} is a cycle polytope}
\label{sec:Pn_is_poly}

Recall that we defined
$$ 
P_k \coloneqq \left\{\vec{v}\in [0,1]^{\SS_k} \big| \exists (\sigma^m)_{m\in\NN} \in \SS^{\NN} \text{ s.t. }|\sigma^m| \to
\infty \text{ and }  \pcoc(\pi, \sigma^m ) \to \vec{v}_{\pi}, \forall \pi\in\SS_k  \right\}  .
$$

The goal of this section is to prove that $P_k$ is the cycle polytope of the overlap graph $\ValGraph[k]$ (see Theorem \ref{thm:vertices}). 
We first prove that $P_k$ is closed and convex (see Proposition \ref{prop:P_n_is_covex}), then we use a correspondence between permutations and paths in $\ValGraph[k]$ (see Definition \ref{defn:overlap_graph}) to prove the desired result.

\subsection{The feasible region \texorpdfstring{$P_k$}{} is convex}
We start with a preliminary result.

\begin{lemma}
	\label{lem:P_n_is_closed}
	The feasible region $P_k$ is closed.
\end{lemma}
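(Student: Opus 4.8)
The plan is to realize $P_k$ as a limit set and show directly that it is closed, by a diagonal-extraction argument. First I would observe that $P_k$ is by definition the set of accumulation points (in $[0,1]^{\SS_k}$) of the sequences $(\pcoc_k(\sigma^m))_m$ as $|\sigma^m|\to\infty$; equivalently, $\vec v\in P_k$ iff there is a sequence of permutations with sizes tending to infinity whose vectors of consecutive-pattern proportions converge to $\vec v$. The key soft input is that $[0,1]^{\SS_k}$ is a compact metric space, so every sequence has a convergent subsequence, and that for each fixed $n$ there are only finitely many permutations of size $n$.

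Now suppose $\vec v^{(j)}\in P_k$ for $j\in\NN$ and $\vec v^{(j)}\to \vec v$ in $[0,1]^{\SS_k}$. I want to produce a single sequence $(\tau^m)$ with $|\tau^m|\to\infty$ and $\pcoc_k(\tau^m)\to \vec v$. For each $j$, since $\vec v^{(j)}\in P_k$, pick a permutation $\rho^{(j)}$ with $|\rho^{(j)}|\ge j$ and $\|\pcoc_k(\rho^{(j)})-\vec v^{(j)}\|\le 1/j$ (possible by the defining property of $P_k$: the approximating sequence for $\vec v^{(j)}$ has sizes going to infinity and proportions converging to $\vec v^{(j)}$, so some term of it works). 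Then
$$\|\pcoc_k(\rho^{(j)})-\vec v\|\le \|\pcoc_k(\rho^{(j)})-\vec v^{(j)}\|+\|\vec v^{(j)}-\vec v\|\le \tfrac1j+\|\vec v^{(j)}-\vec v\|\to 0,$$
and $|\rho^{(j)}|\ge j\to\infty$. Hence $\vec v\in P_k$, so $P_k$ is closed.

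The only point that needs a little care — and the step I expect to be the mild obstacle — is justifying the selection of $\rho^{(j)}$: I must make sure the definition of $P_k$ really does give, for each $\vec v^{(j)}\in P_k$, permutations of arbitrarily large size whose proportions are arbitrarily close to $\vec v^{(j)}$. This is immediate from \cref{eq:setofinter}, since the witnessing sequence $(\sigma^m)_{m\in\NN}$ there satisfies both $|\sigma^m|\to\infty$ and $\pcoc(\pi,\sigma^m)\to\vec v^{(j)}_\pi$ for all $\pi\in\SS_k$ simultaneously (finitely many coordinates), so for any $\eps>0$ and any $N$ there is an $m$ with $|\sigma^m|\ge N$ and $\|\pcoc_k(\sigma^m)-\vec v^{(j)}\|_\infty<\eps$. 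Feeding $\eps=1/j$ and $N=j$ yields $\rho^{(j)}$. No compactness of $\SS$ or continuity statement is needed beyond this; the argument is purely a ``limit of limits is a limit'' diagonalization in the compact cube $[0,1]^{\SS_k}$.
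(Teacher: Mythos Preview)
Your proposal is correct and follows essentially the same approach as the paper's own proof: both use a diagonal-extraction argument, selecting for each $j$ (or $s$) a single permutation from the witnessing sequence that has size at least $j$ and whose consecutive-pattern vector lies within $1/j$ of $\vec v^{(j)}$, then concluding via the triangle inequality. The only cosmetic difference is that the paper names the full witnessing sequences $(\sigma^m_s)_m$ and an index $m(s)$ before selecting, whereas you go straight to the chosen permutation $\rho^{(j)}$; the content is identical.
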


This is a classical consequence of the fact that $P_k$ is a set of limit points. For completeness, we include a simple proof of the statement. Recall that we defined $\pcoc_k ( \sigma )\coloneqq \left(\pcoc(\pi,\sigma)\right)_{\pi\in\SS_k}$.

\begin{proof}
	It suffices to show that, for any sequence $(\vec{v}_s)_{s\in\NN}$ in $P_k$ such that $\vec{v}_s\to\vec{v}$ for some $\vec{v}\in [0,1]^{\SS_k}$, we have that $\vec{v}\in P_k$. 
	For all $s\in\NN$, consider a sequence of permutations $(\sigma^m_s)_{m\in\NN}$ such that $|\sigma^m_s|\stackrel{m\to\infty}{\longrightarrow}\infty$ and $\pcoc_k( \sigma^m_s)\stackrel{m\to\infty}{\longrightarrow}\vec{v}_s$, and some index $m( s )$ of the sequence $(\sigma^m_s)_{m\in\NN}$ such that for all $m\geq m(s),$
	$$|\sigma^{m}_s|\geq s\quad\text{and}\quad ||\pcoc_k( \sigma^{m}_s)-\vec{v}_s||\leq\tfrac{1}{s}\, .$$
	
	Without loss of generality, assume that $m(s)$ is increasing. For every $\ell\in\NN$, define $\sigma^{\ell}\coloneqq\sigma^{m(\ell)}_\ell$.
	It is easy to show that 
	$$|\sigma^\ell|\stackrel{\ell\to\infty}{\longrightarrow}\infty \quad\text{and}\quad\pcoc_k(\sigma^\ell)\stackrel{\ell\to\infty}{\longrightarrow}\vec{v}\, ,$$ 
	where we use the fact that $\vec{v}_s\to\vec{v}$. Therefore $\vec{v}\in P_k$.
\end{proof}

We can now prove the first important result of this section.

\begin{proposition}
	\label{prop:P_n_is_covex}
	The feasible region $P_k$ is convex.
\end{proposition}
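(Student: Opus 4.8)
The plan is to prove convexity by showing that for two points $\vec{v}_1, \vec{v}_2 \in P_k$ and any rational $\lambda \in [0,1]$, the convex combination $\lambda \vec{v}_1 + (1-\lambda)\vec{v}_2$ lies in $P_k$; since $P_k$ is closed by Lemma \ref{lem:P_n_is_closed}, handling rational $\lambda$ suffices to get all $\lambda \in [0,1]$ by a density argument. So fix $\vec{v}_1, \vec{v}_2 \in P_k$ witnessed by sequences $(\sigma^m)_{m\in\NN}$ and $(\tau^m)_{m\in\NN}$ respectively, with sizes tending to infinity, and write $\lambda = p/q$ with $p,q \in \NN$.

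The key construction is to interleave or concatenate blocks built from $\sigma^m$ and $\tau^m$ in the right proportions. Concretely, for each $m$, I would form a new permutation $\rho^m$ of size roughly $q \cdot N_m$ (for suitable $N_m \to \infty$) by juxtaposing $p$ consecutive ``copies'' associated to $\sigma^m$ and $q-p$ consecutive ``copies'' associated to $\tau^m$, arranged so that the values used in each block occupy disjoint intervals (an inflation-type construction, morally using $\oplus$). The point is that $\coc(\pi, \rho^m)$ decomposes as the sum of the consecutive occurrences entirely inside each block, plus a bounded number of occurrences straddling the boundaries between blocks. Since there are only $q-1$ boundaries and each contributes at most $k-1$ extra consecutive windows of size $k$, the straddling contribution is $O(1)$ in $m$, hence negligible after dividing by $|\rho^m| \to \infty$. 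Inside the $\sigma$-blocks the normalized count converges to $(\vec{v}_1)_\pi$ and inside the $\tau$-blocks to $(\vec{v}_2)_\pi$, and weighting by the relative block sizes $p/q$ and $(q-p)/q$ gives exactly $\pcoc(\pi, \rho^m) \to \lambda (\vec{v}_1)_\pi + (1-\lambda)(\vec{v}_2)_\pi$ for every $\pi \in \SS_k$. This shows the rational convex combination lies in $P_k$.

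I would then finish by invoking Lemma \ref{lem:P_n_is_closed}: the set of rational convex combinations of $\vec{v}_1$ and $\vec{v}_2$ is dense in the segment $[\vec{v}_1, \vec{v}_2]$, and since $P_k$ is closed it contains the whole segment. Hence $P_k$ is convex.

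The main obstacle is making the block construction precise enough to control boundary effects: one must choose how to ``stack'' a permutation of size $N_m$ so that it repeats $p$ times while the induced consecutive patterns inside each copy still match $\pcoc(\pi,\sigma^m)$ up to $o(1)$, and similarly for $\tau^m$. The cleanest way is to take $\sigma^m$ itself (of size $n_m$) as one block and $\tau^m$ (of size $n_m'$) as another, possibly repeated, and to balance sizes by choosing multiplicities $a_m, b_m$ with $a_m n_m / (a_m n_m + b_m n_m') \to \lambda$; the straddling error is then $O(a_m + b_m)$ which must be shown to be $o(a_m n_m + b_m n_m')$, which holds as long as $n_m, n_m' \to \infty$. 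Verifying that repeating a single permutation many times via $\oplus$ does not distort the consecutive pattern proportions (again only $O(1)$ boundary terms per junction, with the number of junctions controlled relative to the total size) is the routine but slightly delicate bookkeeping step.
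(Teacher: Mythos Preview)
Your approach is correct and essentially identical to the paper's: reduce to rational convex combinations via closedness (Lemma~\ref{lem:P_n_is_closed}), build a witness by direct sums of many copies of $\sigma^m$ and $\tau^m$, and absorb the boundary windows into an $o(1)$ error. The paper makes the multiplicity choice you flag as ``the main obstacle'' completely explicit by taking $a_m = s\,|\tau^m|$ and $b_m = t\,|\sigma^m|$ (with $\lambda = s/(s+t)$), so that $a_m|\sigma^m| : b_m|\tau^m| = s:t$ exactly for every $m$, which cleans up your balancing step; otherwise the argument is the same.
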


\begin{proof}
	Since $P_k$ is closed (by Lemma \ref{lem:P_n_is_closed}) it is enough to consider rational convex combinations of points in $P_k$, i.e.\ it is enough to establish that for all $\vec{v}_1,\vec{v}_2\in P_k$ and all $ s,t\in\NN$, we have that
	\begin{equation*}
		\frac{s}{s+t}\vec{v}_1+\frac{t}{s+t}\vec{v}_2\in P_k.
	\end{equation*}
	Fix $\vec{v}_1,\vec{v}_2\in P_k$ and $s,t\in\NN$. Since $\vec{v}_1,\vec{v}_2\in P_k$, there exist two sequences $(\sigma^m_1)_{m\in\NN}$, $(\sigma^m_2)_{m\in\NN}$ such that $|\sigma^m_i|\stackrel{m\to\infty}{\longrightarrow}\infty$ and $\pcoc_k( \sigma^m_i)\stackrel{m\to\infty}{\longrightarrow}\vec{v}_i$, for $i=1,2$.
	
	Define $t_m\coloneqq t\cdot |\sigma^m_1|$ and $s_m\coloneqq s\cdot |\sigma^m_2|$. 
	
\begin{figure}[htbp]
		\begin{center}
			\includegraphics[scale=.65]{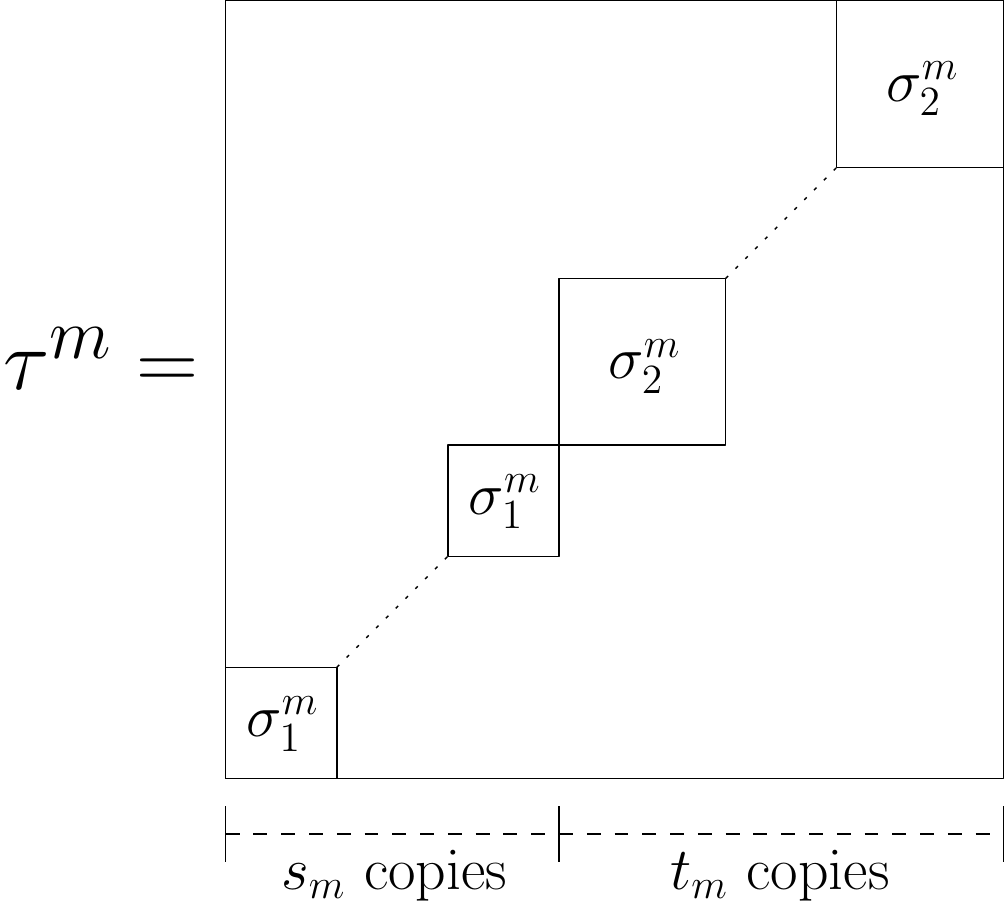}\\
			\caption{Schema for the definition of the permutation $\tau^m$. \label{schema_direct_sum}}
		\end{center}
	\end{figure}	
	We set $\tau^m\coloneqq \left(\oplus_{s_m}\,\sigma^m_1\right)\oplus\left(\oplus_{t_m}\,\sigma^m_2\right)$. For a graphical interpretation of this construction we refer to \cref{schema_direct_sum}.
	We note that for every $\pi\in\SS_k$, we have
	\begin{equation*}
		\coc(\pi,\tau^m)=s_m\cdot\coc(\pi,\sigma^m_1)+t_m\cdot\coc(\pi,\sigma^m_2)+Er,
	\end{equation*}
	where $Er\leq(s_m+t_m-1)\cdot |\pi|$. This error term comes from the number of intervals of size $|\pi|$ that intersect the boundary of some copies of $\sigma^m_1$ or $\sigma^m_2$. Hence
	\begin{equation*}
	\begin{split}
	\pcoc(\pi,\tau^m)&=\frac{s_m\cdot|\sigma^m_1|\cdot\pcoc(\pi,\sigma^m_1)+t_m\cdot|\sigma^m_2|\cdot\pcoc(\pi,\sigma^m_2)+Er}{s_m\cdot|\sigma^m_1|+t_m\cdot |\sigma^m_2|}\\
	&=\frac{s}{s+t}\pcoc(\pi,\sigma^m_1)+\frac{t}{s+t}\pcoc(\pi,\sigma^m_2)+O\left(|\pi|\left(\tfrac{1}{|\sigma^m_1|}+\tfrac{1}{|\sigma^m_2|}\right)\right).
	\end{split}
	\end{equation*}
	As $m$ tends to infinity, we have $$\pcoc_k(\tau^m)\to\frac{s}{s+t}\vec{v}_1+\frac{t}{s+t}\vec{v}_2,$$ 
	since $|\sigma^m_i|\stackrel{m\to\infty}{\longrightarrow}\infty$ and $\pcoc_k( \sigma^m_i)\stackrel{m\to\infty}{\longrightarrow}\vec{v}_i$, for $i=1,2$. Noting also that
	$$|\tau^m|\to\infty,$$
	we can conclude that $\tfrac{s}{s+t}\vec{v}_1+\tfrac{t}{s+t}\vec{v}_2\in P_k$. This ends the proof.
\end{proof}

\subsection{The feasible region \texorpdfstring{$P_k$}{} as the limit of random permutations}

Using similar ideas to the ones used in the proof above, we can establish the equality between sets
\begin{align*}
P_k \coloneqq &\left\{\vec{v}\in [0,1]^{\SS_k} \big| \exists (\sigma^m)_{m\in\NN} \in \SS^{\NN} \text{ s.t. }|\sigma^m| \to
\infty \text{ and }  \pcoc(\pi, \sigma^m ) \to \vec{v}_{\pi}, \forall \pi\in\SS_k \right\}\\
=&\left\{(\Gamma_{\pi}(\sigma^{\infty}))_{\pi\in\SS_k} \big| \sigma^{\infty}\text{ is a random infinite rooted \emph{shift-invariant} permutation}  \right\} \, . 
\end{align*}
We first recall the following.

\begin{definition}\label{def_equality_intro}
	For a total order $(\Z,\preccurlyeq)$, its \emph{shift} $(\Z,\preccurlyeq')$ is defined by $i+1\preccurlyeq' j+1$ if and only if $i\preccurlyeq j.$ A random infinite rooted permutation, or equivalently a random total order on $\Z$, is said to be shift-invariant if it has the same distribution as its shift.
\end{definition}

We refer to \cite[Section 2.6]{borga2018local} for a full discussion of shift-invariant random permutations.

\begin{proposition}\label{cor_equality_intro} The following equality holds 
	\begin{equation*}
	P_k=\left\{(\Gamma_{\pi}(\sigma^{\infty}))_{\pi\in\SS_k} \big| \sigma^{\infty}\text{ is a random infinite rooted shift-invariant permutation}  \right\} \, . 
	\end{equation*}
\end{proposition}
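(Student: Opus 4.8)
The equality is a double inclusion, and the natural strategy is to convert between deterministic sequences of (large, finite) permutations and random infinite rooted shift-invariant permutations, keeping track of how consecutive-pattern proportions behave under the passage to the limit in each direction. Throughout I will use the characterization of Benjamini--Schramm convergence from Theorem \ref{thm:carac_cocc}: a sequence $(\sigma^n)$ BS-converges to $\sigma^\infty$ if and only if $\pcoc(\pi,\sigma^n)\to\Gamma_\pi(\sigma^\infty)$ for all $\pi$; and I will use the fact (from \cite{borga2018local}) that the BS-limit of \emph{any} convergent sequence of finite permutations is automatically shift-invariant, since rooting at a uniform random point and then shifting corresponds, up to a vanishing boundary error, to rooting at a uniform random point of a cyclic reindexing.

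For the inclusion ``$\subseteq$'': take $\vec v\in P_k$ witnessed by a sequence $(\sigma^m)$ with $|\sigma^m|\to\infty$ and $\pcoc(\pi,\sigma^m)\to\vec v_\pi$. By compactness of the space of random infinite rooted permutations (each coordinate $\pcoc(\pi,\sigma^m)$ lies in $[0,1]$, and the relevant topology is metrizable and compact, see \cite{borga2018local}), we may pass to a subsequence along which $(\sigma^m)$ BS-converges to some $\sigma^\infty$. Then Theorem \ref{thm:carac_cocc} forces $\Gamma_\pi(\sigma^\infty)=\lim_m\pcoc(\pi,\sigma^m)=\vec v_\pi$ for every $\pi\in\SS_k$, and $\sigma^\infty$ is shift-invariant by the remark above. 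Hence $\vec v$ lies in the right-hand set.

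For the inclusion ``$\supseteq$'': given a shift-invariant $\sigma^\infty$, I want an \emph{explicit deterministic} sequence $(\sigma^m)$ of finite permutations with $|\sigma^m|\to\infty$ and $\pcoc(\pi,\sigma^m)\to\Gamma_\pi(\sigma^\infty)$. First, by Theorem \ref{thm:carac_cocc} there \emph{is} some sequence of finite permutations $(\tau^n)$ BS-converging to $\sigma^\infty$, i.e.\ $\pcoc(\pi,\tau^n)\to\Gamma_\pi(\sigma^\infty)$; so a priori $\vec v:=(\Gamma_\pi(\sigma^\infty))_{\pi\in\SS_k}$ is already in $P_k$ by definition. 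The only subtlety is that the right-hand set as written ranges over \emph{all} shift-invariant $\sigma^\infty$, so one must also check that every such $\sigma^\infty$ actually arises as a BS-limit of finite permutations — this is where shift-invariance is used, mirroring the proof of Proposition \ref{prop:P_n_is_covex}: given $\sigma^\infty$ one builds approximating finite permutations by taking, for large $m$, the pattern $\std(\sigma^\infty$ restricted to a window $\{-m,\dots,m\}$ around the root$)$ and then replacing it by a large direct-type concatenation (roughly $\oplus$-sums of shifted copies) so that the uniform random root in the finite permutation sees, with probability $\to 1$, a local neighborhood distributed like that of $\sigma^\infty$; shift-invariance is exactly what guarantees the limiting law does not depend on \emph{where} in the big permutation the root lands, up to a boundary error of order $|\pi|/|\sigma^m|\to 0$ as in the computation of $\pcoc(\pi,\tau^m)$ in Proposition \ref{prop:P_n_is_covex}. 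Then Theorem \ref{thm:carac_cocc} gives $\pcoc(\pi,\sigma^m)\to\Gamma_\pi(\sigma^\infty)$, so $\vec v\in P_k$.

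The main obstacle is the ``$\supseteq$'' direction: one must argue carefully that an \emph{arbitrary} shift-invariant random total order on $\Z$ is realizable as a BS-limit of honest finite permutations, and that the construction can be made so that boundary effects wash out. I expect this to be handled exactly by the direct-sum/concatenation trick already used in Proposition \ref{prop:P_n_is_covex} together with a Skorokhod-type coupling of the finite random root with the infinite one; shift-invariance is the hypothesis that makes the concatenation of many shifted copies produce the correct limiting local statistics. Everything else (compactness, continuity of $\pi\mapsto\pcoc$, the closedness of $P_k$ from Lemma \ref{lem:P_n_is_closed}) is routine once this construction is in place, and I would refer to \cite{borga2018local} for the topological facts about the space of random infinite rooted permutations rather than reproving them.
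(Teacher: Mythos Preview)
Your plan for the inclusion $P_k\subseteq\{\ldots\}$ is fine and essentially matches the paper: compactness together with the fact (from \cite{borga2018local}) that BS-limits of finite permutations are shift-invariant does the job.

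The gap is in the reverse inclusion. Your sketch produces, from a shift-invariant $\sigma^\infty$, the \emph{random} finite permutations $W_m=\std(\sigma^\infty|_{\{-m,\dots,m\}})$ and then proposes a ``direct-type concatenation of shifted copies'' plus a ``Skorokhod-type coupling''. None of this yields a \emph{deterministic} sequence witnessing membership in $P_k$. Taking a single realization of $\sigma^\infty$ and using its windows does not work in general: if $\sigma^\infty$ is not ergodic, the empirical consecutive-pattern frequencies along one realization converge to random (not deterministic) limits, so you will not hit the prescribed vector $(\Gamma_\pi(\sigma^\infty))_\pi$. And concatenating independent copies of $W_m$ still gives a random object. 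The missing idea is an explicit derandomization of the law of the finite approximants. The paper does exactly this: it first invokes \cite[Proposition~2.44 and Theorem~2.45]{borga2018local} to get a sequence of \emph{random} permutations $\sigma^n$ with $|\sigma^n|=n$ whose annealed BS-limit is $\sigma^\infty$ (so that $\E[\pcoc(\pi,\sigma^n)]\to\Gamma_\pi(\sigma^\infty)$), and then replaces each random $\sigma^n$ by a single deterministic permutation
\[
\nu^n=\bigoplus_{\rho\in\SS_n}\bigl(\oplus_{q^n_\rho}\rho\bigr),
\]
where the integers $q^n_\rho$ are chosen so that $q^n_\rho/\sum_\theta q^n_\theta$ approximates $\P(\sigma^n=\rho)$ to within $n^{-n}$. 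A direct computation (the same boundary estimate you cite from Proposition~\ref{prop:P_n_is_covex}) then shows $|\pcoc(\pi,\nu^n)-\E[\pcoc(\pi,\sigma^n)]|\to 0$, hence $\pcoc(\pi,\nu^n)\to\Gamma_\pi(\sigma^\infty)$. Your window construction can be made to fit this mold---the law of $W_m$ plays the role of the law of $\sigma^n$---but the rational-approximation-plus-direct-sum step is the crucial piece you are missing, and without it the argument does not close.
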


\begin{proof}
	In \cite[Proposition 2.44 and Theorem 2.45]{borga2018local} it was proved that a random infinite rooted permutation $\sigma^\infty$ is shift-invariant  if and only if it is the annealed Benjamini--Schramm limit of a sequence of random permutations\footnote{The annealed Benjamini--Schramm convergence is an extension of the Benjamini--Schramm convergence (BS-limit) to sequences of \emph{random} permutations. For more details see \cite[Section 2.5.1]{borga2018local}.}.  
	Furthermore, we can choose this sequence of random permutations $\sigma^n$ in such a way that $|\sigma^n|=n $ a.s., for all $n\in\NN$.
	
	This result and Theorem \ref{thm:carac_cocc} immediately imply that 
	\begin{equation*}
	P_k\subseteq\left\{(\Gamma_{\pi}(\sigma^{\infty}))_{\pi\in\SS_k} \big| \sigma^{\infty}\text{ is a random infinite rooted shift-invariant permutation}  \right\} \, . 
	\end{equation*}
	To show the other inclusion, it is enough to show that for every random infinite rooted shift-invariant permutation $\sigma^{\infty}$, there exists a sequence of \emph{deterministic} permutations that Benjamini--Schramm converges to $\sigma^{\infty}$.
	
	By the above mentioned result of \cite{borga2018local}, there exists a sequence $(\sigma^n)_{n\in\NN}$ of \emph{random} permutations such that $|\sigma^n|=n$ a.s., for all $n\in\NN$, and $(\sigma^n)_{n\in\NN}$ converges in the annealed Benjamini--Schramm sense to $\sigma^{\infty}$. Using \cite[Theorem 2.24]{borga2018local} we know that, for every $\pi\in\SS$,
	\begin{equation}\label{eq:first_conv}
	\mathbb{E}[\pcoc(\pi,\sigma^n)]\to\Gamma_{\pi}(\sigma^\infty)\, .
	\end{equation}
	Let, for all $n\in\NN$ and $\rho\in\SS_n$,
	$$p^n_\rho\coloneqq\mathbb P(\sigma^n=\rho).$$
	For every $n\in\NN$, we can find $n!$ integers $\{q^n_\rho\}_{\rho\in\SS_n}$ such that for every $\rho\in\SS_n$,
	\begin{equation}
	\label{eq:bound}
	\left|\frac{q^n_\rho}{\sum_{\theta\in\SS_n}q^n_\theta}-p^n_\rho\right|\leq\frac{1}{n^n}.
	\end{equation}
	Let us now consider the deterministic sequence of permutations of size $n \sum_{\theta \in \SS_n} q_{\theta}^n$ defined as $$\nu^n\coloneqq\bigoplus_{\rho\in\SS_n}(\oplus_{q^n_\rho}\rho),$$
	where we fixed any order on $\SS_n$.
	Using the same error estimates as in the proof of Proposition \ref{prop:P_n_is_covex}, it follows that
	$$\pcoc(\pi,\nu^n)=\frac{\sum_{\rho\in\SS_n}q^n_\rho\cdot\coc(\pi,\rho)+Er}{n\cdot\sum_{\theta\in\SS_n}q^n_\theta},\quad\text{for all}\quad \pi\in\SS,$$
	with $Er\leq( -1  + \sum_{\theta\in\SS_n}q^n_\theta )\cdot|\pi|$. Therefore	
	\begin{multline*}
	 \left| \pcoc(\pi,\nu^n)-\mathbb{E}[\pcoc(\pi,\sigma^n)] \right|\\
	 \leq  \left| \sum_{\rho\in\SS_n} \frac{q^n_\rho}{\sum_{\theta\in\SS_n} q^n_\theta} \cdot \pcoc(\pi, \rho) - \sum_{\rho\in\SS_n} p^n_\rho \cdot \pcoc(\pi,\rho) \right| +\left|\frac{Er}{n\cdot\sum_{\theta\in\SS_n}q^n_\theta} \right| \\
	\leq\frac{1}{n^n}\cdot\sum_{\rho\in\SS_n}\pcoc(\pi,\rho)+\frac{|\pi|}{n},
	\end{multline*}
	where in the second inequality we used the bound in \cref{eq:bound} and the bound for $Er$. Since the size of $\pi$ is fixed and the term $\sum_{\rho\in\SS_n}\pcoc(\pi,\rho)$ is bounded by $n!$, we can conclude that $\left|\pcoc(\pi,\nu^n)-\mathbb{E}[\pcoc(\pi,\sigma^n)]\right|\to0$. Combining this with \cref{eq:first_conv} we get
	$$\pcoc(\pi,\nu^n)\to \Gamma_{\pi}(\sigma^\infty), \quad\text{for all}\quad \pi\in\SS.$$
	Therefore, using Theorem \ref{thm:carac_cocc} we can finally deduce that the deterministic sequence $\{\nu^n\}_{n\in\NN}$ converges to $\sigma^\infty$ in the Benjamini--Schramm topology, concluding the proof.
\end{proof}

\subsection{The overlap graph}
We now want to study the way in which consecutive patterns of permutations can overlap.
 
We start by introducing some more notation. For a permutation $\pi \in\SS_k$, with $k\in\NN_{\geq2}$, let $\be ( \pi )\in \SS_{k-1}$ (resp.\ $\en(\pi) \in \SS_{k-1}$) be the patterns generated by its first $k-1$ indices (resp.\ last $k-1$ indices). More precisely,
\[\be ( \pi )\coloneqq\pat_{[1,k-1]}(\pi)\quad \text{and}\quad \en( \pi )\coloneqq\pat_{[2,k]}(\pi).\]

The following definition, introduced in \cite{MR1197444}, is key in the description of the feasible region $P_k$.

\begin{definition}[Overlap graph]\label{defn:overlap_graph}
	Let $k\in\NN_{\geq2}$.
	We define the \emph{overlap graph} $\ValGraph[k]$ of size $k$  as a directed multigraph with labeled edges, where the vertices are elements of $\SS_{k-1}$ and for all $\pi\in\SS_{k}$ we add the edge $ \be(\pi ) \to \en(\pi ) $ labeled by $\pi$. 
\end{definition}

This gives us a directed graph with $k!$ edges, and $(k-1)!$ vertices.
Informally, the continuations of an edge $\tau$ in the overlap graph $\ValGraph[k]$ record the consecutive patterns of size $k$ that can appear after the consecutive pattern $\tau$. 
More precisely, for a permutation $\sigma\in\SS_{\geq k+1}$ and an interval $I\subseteq[|\sigma|-1]$ of size $k$, let $\tau \coloneqq \pat_{I}(\sigma)\in\SS_{k} $, then we have that 
\begin{equation}\label{eq:contin}
\pat_{I+1}(\sigma)\in C_{\ValGraph[k]}(\tau),
\end{equation}
where $I+1$ denotes the interval obtained from $I$ shifting all the indices by $+1$, and we recall that $C_{\ValGraph[k]}(\tau)$ is the set of continuations of $\tau$.

\begin{example}
	\label{exemp:overlap_graph}
We recall that the overlap graph $\ValGraph[4]$ was displayed in \cref{Overlap_graph_exemp} on page~\pageref{Overlap_graph_exemp}. The six vertices (in red) correspond to the six permutations of size three and the twenty-four oriented edges correspond to the twenty-four permutations of size four.
\end{example}
	
Given a permutation $\sigma\in\SS_m$, for some $m\geq k$, we can associate to it a walk $W_k(\sigma)=(e_1,\dots,e_{m-k+1})$ in $\ValGraph[k]$ of size $m-k+1$ defined by 
\begin{equation}
\label{eq:def_walf}
	\lb(e_i)\coloneqq\pat_{[i,i+k-1]}(\sigma),\quad\text{for all}\quad i\in [m-k+1].
\end{equation}

Note that \cref{eq:contin} justifies that this sequence of edges is indeed a walk in the overlap graph.
\begin{example}
	\label{exemp:overlap_graph2}
	Take the graph $\ValGraph[4]$ from \cref{Overlap_graph_exemp} on page~\pageref{Overlap_graph_exemp}, and consider the permutation $\sigma=628451793\in\SS_9.$ The corresponding walk $W_4(\sigma)$ in $\ValGraph[4]$ is
	$$(3142,1423,4231,2314,2134,1342)$$ 
	and it is highlighted in green in \cref{Overlap_graph_exemp}.	
\end{example}

Note that the map $W_k$ is not injective (see for instance Example \ref{exemp:overlap_graph3} below) but the following holds. 

\begin{lemma}\label{lemma:pathperm}
	Fix $k\in\NN_{\geq2}$ and $m\geq k$. The map $W_k$, from the set $\SS_{m}$ of permutations of size $m$ to the set of walks in $\ValGraph[k]$ of size $m-k+1$, is surjective.	
\end{lemma}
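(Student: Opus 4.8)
The plan is to prove surjectivity of $W_k$ by explicitly constructing, for any walk $w=(e_1,\dots,e_{m-k+1})$ in $\ValGraph[k]$, a permutation $\sigma\in\SS_m$ with $W_k(\sigma)=w$. The key observation is that the labels $e_1,\dots,e_{m-k+1}$ are permutations in $\SS_k$, and the fact that consecutive edges share a vertex means precisely that $\en(e_i)=\be(e_{i+1})$ in $\SS_{k-1}$, i.e.\ the pattern on the last $k-1$ positions of $e_i$ agrees with the pattern on the first $k-1$ positions of $e_{i+1}$. So the combinatorial data is a coherent system of overlapping patterns, and we must realize it by actual real values.

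First I would build the values $\sigma(1),\dots,\sigma(m)$ one at a time, maintaining the invariant that after choosing $\sigma(1),\dots,\sigma(i+k-1)$ (for $i\ge 1$) we have $\pat_{[i,i+k-1]}(\sigma)=\lb(e_i)=e_i$. For the base case, set $\sigma(1),\dots,\sigma(k)$ to be any sequence of distinct reals realizing the pattern $e_1$; taking $\sigma(j)=e_1(j)$ works. For the inductive step, suppose $\sigma(1),\dots,\sigma(i+k-1)$ have been chosen so that $\pat_{[i,i+k-1]}(\sigma)=e_i$. We want to choose $\sigma(i+k)$ so that $\pat_{[i+1,i+k]}(\sigma)=e_{i+1}$. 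Since $\ar(e_i)=\st(e_{i+1})$, we have $\en(e_i)=\be(e_{i+1})$, which means the pattern of $\sigma(i+1),\dots,\sigma(i+k-1)$ (which equals $\en(e_i)$ by the induction hypothesis) is exactly $\be(e_{i+1})$, the pattern of the first $k-1$ entries of $e_{i+1}$. Therefore there is a unique ``slot'' --- an open interval or a half-line in $\R$, determined by the relative position that $e_{i+1}(k)$ occupies among $e_{i+1}(1),\dots,e_{i+1}(k-1)$ --- into which $\sigma(i+k)$ must fall (relative to $\sigma(i+1),\dots,\sigma(i+k-1)$) so that $\std(\sigma(i+1),\dots,\sigma(i+k))=e_{i+1}$. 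Pick any real in that slot distinct from all previously used values; such a choice exists since the slot is a nonempty open set and we have excluded only finitely many points. This completes the induction, and after $m-k+1$ steps we obtain $\sigma(1),\dots,\sigma(m)$, all distinct, with $W_k(\std(\sigma))=w$; replacing the reals by their standardization gives the desired permutation in $\SS_m$.

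I would phrase the ``slot'' argument carefully: given distinct reals $x_1,\dots,x_{k-1}$ with pattern $\be(e_{i+1})$, and given that $e_{i+1}$ places its last entry at rank $r=e_{i+1}(k)$ among the $k$ entries, the condition on a new value $y$ is that $y$ be larger than exactly $r-1$ of the $x_j$'s; equivalently, if $x_{(1)}<\dots<x_{(k-1)}$ are the order statistics, then $y$ must lie in the interval $(x_{(r-1)},x_{(r)})$ (with the conventions $x_{(0)}=-\infty$, $x_{(k)}=+\infty$). This interval is nonempty, so a valid $y$ avoiding finitely many forbidden values exists. This is the only place any real content enters, and it is genuinely elementary.

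The main (and only) obstacle is bookkeeping: one must be sure that the induction hypothesis gives exactly the pattern $\en(e_i)$ on positions $i+1,\dots,i+k-1$ and that this coincides with $\be(e_{i+1})$ — which is the defining property of an edge-to-edge transition in $\ValGraph[k]$ — and that the newly chosen value does not collide with earlier ones, which is automatic since only finitely many values have been used. There is no deeper difficulty; the lemma is essentially the statement that the overlap graph was built precisely so that walks lift to permutations, and non-injectivity (already illustrated by the freedom in choosing the actual reals within each slot) is consistent with surjectivity.
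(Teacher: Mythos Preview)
Your proof is correct and is essentially the same greedy construction as the paper's: start with a realization of $e_1$ and append one value at a time so that each new window of length $k$ realizes the prescribed pattern $e_{i+1}$. The paper phrases this as inserting a point between two rows of the diagram at each step, while you work with real values and standardize once at the end; your slot argument $(x_{(r-1)},x_{(r)})$ makes explicit what the paper leaves as ``the choice for this final additional point may not be unique, but exists.''
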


\begin{proof}
	We exhibit a greedy procedure that, given a walk $w=(e_1,\dots,e_s)$ in $\ValGraph[k]$, constructs a permutation $\sigma$ of size $s+k-1$ such that $W_k(\sigma)=w$.
	Specifically, we construct a sequence of $s$ permutations $(\sigma_i)_{i\leq s}$, with $|\sigma_i|=i+k-1$, in such a way that $\sigma$ is equal to $\sigma_s$. For this proof, it is useful to consider permutations as diagrams.
	
	The first permutation is defined as $\sigma_1 = \lb (e_1 )$.
	To construct $\sigma_{i+1}$ we add to the diagram of $\sigma_i$ a final additional point on the right of the diagram between two rows, in such a way that the last $k$ points induce the consecutive pattern $\lb ( e_{i+1} )$ (the choice for this final additional point may not be unique, but exists).
	Setting $\sigma\coloneqq \sigma_s$ we have by construction that $W_k(\sigma)=w.$
\end{proof}

We illustrate the construction above in a concrete example.

\begin{example}
	\label{exemp:overlap_graph3}
	Consider the walk $w=(3142,1423,4231,2314,2134,1342)$ obtained in Example \ref{exemp:overlap_graph2} and construct, as explained in the previous proof, a permutation $\sigma$ such that $W_k(\sigma)=w$. We set 
	$\sigma_1=3142$.  
	Then, since $e_2=1423$, we add a point between the second and the third row of $\sigma_1$ (see \cref{fig:diag} for the diagrams of the considered permutations), obtaining
	$\sigma_2=41523.$ Note that the pattern induced by the last $4$ points of $\sigma_2$ is exactly $e_2=1423.$ We highlight that we could also add the point between the third and the fourth row of $\sigma_1$ obtaining the same induced pattern. However, in this example, we always chose to add the points in the bottommost possible place. We iterate this procedure constructing
	$\sigma_3=516342,$
	$\sigma_4=6173425,$
	$\sigma_5=71834256,$
	$\sigma_6=819452673.$
	Setting $\sigma\coloneqq \sigma_6=819452673$ we obtain that $W_4(\sigma)=w.$ 
	Note that this is not the same permutation considered in Example \ref{exemp:overlap_graph2}, indeed the map $W_k$ is not injective.
	\begin{figure}[htbp]
		\begin{center}
			\begin{equation*}
			\begin{split}
			\sigma_1=\begin{array}{lcr}
			\begin{tikzpicture}
			\begin{scope}[scale=.3]
			\permutation{3,1,4,2}
			\end{scope}
			\end{tikzpicture}
			\end{array},\;
			&\sigma_2=\begin{array}{lcr}
			\begin{tikzpicture}
			\begin{scope}[scale=.3]
			\permutation{4,1,5,2,3}
			\end{scope}
			\end{tikzpicture}
			\end{array},\;
			\sigma_3=\begin{array}{lcr}
			\begin{tikzpicture}
			\begin{scope}[scale=.3]
			\permutation{5,1,6,3,4,2}
			\end{scope}
			\end{tikzpicture}
			\end{array},\;
			\sigma_4=\begin{array}{lcr}
			\begin{tikzpicture}
			\begin{scope}[scale=.3]
			\permutation{6,1,7,3,4,2,5}
			\end{scope}
			\end{tikzpicture}
			\end{array},\\
			\sigma_5=&\begin{array}{lcr}
			\begin{tikzpicture}
			\begin{scope}[scale=.3]
			\permutation{7,1,8,3,4,2,5,6}
			\end{scope}
			\end{tikzpicture}
			\end{array},\;
			\sigma_6=\begin{array}{lcr}
			\begin{tikzpicture}
			\begin{scope}[scale=.3]
			\permutation{8,1,9,4,5,2,6,7,3}
			\end{scope}
			\end{tikzpicture}
			\end{array}.
			\end{split}
			\end{equation*}\\
			\caption{The diagrams of the six permutations considered in Example \ref{exemp:overlap_graph3}. Note that every permutation is obtained by adding a new final point to the previous one. \label{fig:diag}}
		\end{center}
	\end{figure}
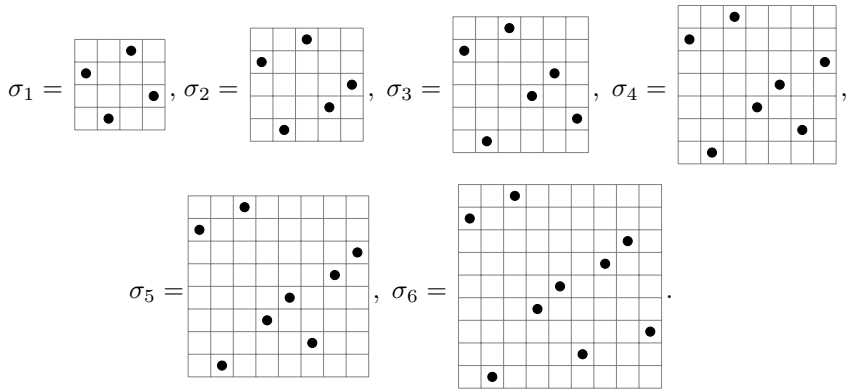
\end{example}

We conclude this section with two simple results useful for the following sections.

\begin{lemma}\label{lemma:pathocc}
	If $\sigma $ is a permutation and $w=W_k(\sigma )=(e_1, \dots , e_s)$ is its corresponding walk on $\ValGraph[k]$, then, for all $\pi\in\SS_k,$
	$$\coc(\pi, \sigma ) =  | \{i \leq s | \lb (e_i )  =  \pi \}|.$$
\end{lemma}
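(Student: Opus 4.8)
The plan is to unfold both sides of the claimed identity directly from the definitions, since this lemma is essentially a bookkeeping statement: it says that the number of consecutive occurrences of a pattern $\pi$ in $\sigma$ equals the number of edges in the associated walk $W_k(\sigma)$ that carry the label $\pi$. First I would recall that by definition $W_k(\sigma) = (e_1, \dots, e_s)$ with $s = |\sigma| - k + 1$ and $\lb(e_i) = \pat_{[i,i+k-1]}(\sigma)$ for each $i \in [s]$, as given in \cref{eq:def_walf}. On the other side, by the definition of $\coc$, a consecutive occurrence of $\pi$ in $\sigma$ corresponds to an interval $I \subseteq [|\sigma|]$ with $\pat_I(\sigma) = \pi$; since $|\pi| = k$, such an interval is necessarily of the form $I = [i, i+k-1]$ for a unique $i \in [|\sigma| - k + 1] = [s]$.

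The key observation is then that the map $i \mapsto [i, i+k-1]$ is a bijection between $[s]$ and the set of intervals of $[|\sigma|]$ of size $k$. Under this bijection, the condition "$\pat_{[i,i+k-1]}(\sigma) = \pi$" defining a consecutive occurrence translates exactly into "$\lb(e_i) = \pi$". Hence
\begin{equation*}
\coc(\pi, \sigma) = \bigl| \{ I \subseteq [|\sigma|] \mid I \text{ is an interval, } |I| = k, \pat_I(\sigma) = \pi \} \bigr| = \bigl| \{ i \leq s \mid \lb(e_i) = \pi \} \bigr|,
\end{equation*}
which is the desired equality. One small point worth noting explicitly is that intervals $I$ of size strictly less than $k$ cannot satisfy $\pat_I(\sigma) = \pi$ because $\pat_I(\sigma) \in \SS_{|I|}$ and $\pi \in \SS_k$, so restricting to size-$k$ intervals loses nothing; similarly $|I| > k$ is impossible. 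This justifies that the first cardinality above really is $\coc(\pi,\sigma)$ as defined in \cref{sect:notation}.

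I do not expect any genuine obstacle here — the statement is immediate once the definitions are laid side by side. The only thing to be slightly careful about is the indexing convention: $W_k(\sigma)$ has $s = m - k + 1$ edges when $|\sigma| = m$, and the intervals of size $k$ in $[m]$ are precisely $[1,k], [2, k+1], \dots, [m-k+1, m]$, so there are exactly $s$ of them, matching the number of edges. So the proof is just a one-line identification of two index sets, and the write-up should simply make this bijection explicit and invoke \cref{eq:def_walf}.
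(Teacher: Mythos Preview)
Your proposal is correct and follows exactly the paper's approach: the paper's proof is a single sentence stating that the result is a trivial consequence of the definition of $W_k$, referring to \cref{eq:def_walf}. Your write-up simply makes explicit the bijection between size-$k$ intervals and edge indices that the paper leaves implicit.
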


\begin{proof}
	This is a trivial consequence of the definition of the map $W_k.$ See in particular \cref{eq:def_walf}.
\end{proof}

\begin{observation}\label{obs:gluepermutations}
	Let $\pi_1$ and $\pi_2$ be two permutations of size $k - 1 \geq 1$, and take $\tau=\pi_1\oplus\pi_2$.
	Then the path $W_k(\tau)$ goes from $\pi_1$ to $\pi_2$. Consequently, $\ValGraph[k]$ is strongly connected.
\end{observation}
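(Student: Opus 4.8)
The statement to prove is Observation \ref{obs:gluepermutations}: for two permutations $\pi_1,\pi_2$ of size $k-1 \geq 1$, the path $W_k(\pi_1 \oplus \pi_2)$ goes from $\pi_1$ to $\pi_2$, and consequently $\ValGraph[k]$ is strongly connected.

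\medskip

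The plan is to unravel the definitions directly. First I would recall that $\tau = \pi_1 \oplus \pi_2$ is a permutation of size $2(k-1) = 2k-2 \geq k$ (using $k-1 \geq 1$, so $k \geq 2$ and $2k-2 \geq k$), hence $W_k(\tau)$ is a well-defined walk of size $2k-2 - k + 1 = k-1$ in $\ValGraph[k]$. By Definition \ref{defn:overlap_graph} and the definition of $W_k$ in \cref{eq:def_walf}, the walk $W_k(\tau) = (e_1, \dots, e_{k-1})$ has $\lb(e_i) = \pat_{[i,i+k-1]}(\tau)$, and this edge goes from the vertex $\be(\lb(e_i)) = \pat_{[i,i+k-2]}(\tau)$ to the vertex $\en(\lb(e_i)) = \pat_{[i+1,i+k-1]}(\tau)$. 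So the walk starts at the vertex $\pat_{[1,k-1]}(\tau)$ and ends at the vertex $\pat_{[k,2k-2]}(\tau)$.

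\medskip

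The key computation is then to identify these two endpoint vertices. Since $\tau = \pi_1 \oplus \pi_2$ means $\tau(j) = \pi_1(j)$ for $j \in [1,k-1]$ and $\tau(j) = \pi_2(j-(k-1)) + (k-1)$ for $j \in [k, 2k-2]$, the first $k-1$ entries of $\tau$ are exactly the entries of $\pi_1$, so $\pat_{[1,k-1]}(\tau) = \std(\pi_1(1), \dots, \pi_1(k-1)) = \pi_1$. Likewise the last $k-1$ entries of $\tau$ are $\pi_2(1)+(k-1), \dots, \pi_2(k-1)+(k-1)$, which have the same relative order as $\pi_2(1), \dots, \pi_2(k-1)$, so $\pat_{[k,2k-2]}(\tau) = \pi_2$. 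Hence $W_k(\tau)$ is a walk from $\pi_1$ to $\pi_2$ in $\ValGraph[k]$, as claimed. For strong connectedness: given any two vertices $\pi_1, \pi_2 \in \SS_{k-1}$, the walk $W_k(\pi_1 \oplus \pi_2)$ exhibits a directed path from $\pi_1$ to $\pi_2$ (a walk suffices for the definition of strong connectedness, but in fact one could also note each vertex of $\SS_{k-1}$ has an incident loop or extract a genuine path); so $\ValGraph[k]$ is strongly connected.

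\medskip

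There is essentially no obstacle here — the only thing requiring a moment of care is checking the size bookkeeping (that $2k-2 \geq k$ so that $W_k$ is defined on $\tau$, and that the walk length $k-1$ is positive, which needs $k \geq 2$, guaranteed by $k-1 \geq 1$), and being careful that "going from $\pi_1$ to $\pi_2$" refers to the start of the first edge and the arrival of the last edge of the walk, matching the intermediate vertices consistently via \cref{eq:contin}. I would keep the proof to two or three sentences invoking these definitional facts.
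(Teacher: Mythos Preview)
Your argument is correct and is exactly the direct definitional unwinding the paper has in mind; the paper states this as an observation with no proof, so your write-up simply makes explicit the size check $2k-2\geq k$ and the identification of the two endpoint patterns $\pat_{[1,k-1]}(\tau)=\pi_1$ and $\pat_{[k,2k-2]}(\tau)=\pi_2$.
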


\subsection{A description of the feasible region \texorpdfstring{$P_k$}{the feasible region}}
The goal of this section is to prove the following result.
\begin{theorem}\label{thm:vertices}
The feasible region $P_k$ is the cycle polytope of the overlap graph $\ValGraph[k]$, i.e.\
\begin{equation}
\label{eq:Pn_equal_POn}
	P_k=P(\ValGraph[k]).
\end{equation}
As a consequence, the set of vertices of $P_k$ is $\{\vec{e}_{\mathcal{C}} | \, \mathcal{C} \text{ is a simple cycle of } \ValGraph[k] \}$ and the dimension of $P_k$ is $k! - (k-1)!$. Moreover, the polytope $P_k$ is described by the equations
$$ P_k = \left\{ \vec{v}\in [0,1]^{\SS_k} \Bigg| \sum_{\pi\in \SS_k} v_\pi = 1\, , \, \, \sum_{\be(\pi) = \rho} v_\pi = \sum_{\en (\pi) = \rho} v_\pi \, , \forall \rho \in \SS_{k-1} \right\} \, .$$
\end{theorem}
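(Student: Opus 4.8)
The plan is to prove the three assertions of Theorem~\ref{thm:vertices} as consequences of the single equality $P_k = P(\ValGraph[k])$, since the dimension, vertices and equations of $P(\ValGraph[k])$ are already available: $\ValGraph[k]$ is strongly connected by Observation~\ref{obs:gluepermutations}, so Proposition~\ref{thm:dimension} gives dimension $|E(\ValGraph[k])| - |V(\ValGraph[k])| = k! - (k-1)!$, Proposition~\ref{prop:vertices} identifies the vertices with the simple cycles, and Proposition~\ref{thm:faces} gives exactly the stated system of equations once we recall that edges of $\ValGraph[k]$ are labelled by $\SS_k$, vertices by $\SS_{k-1}$, and the incidence relations $\st(e)=\rho \iff \be(\pi)=\rho$, $\ar(e)=\rho \iff \en(\pi)=\rho$ for the edge $e$ labelled $\pi$. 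So the whole theorem reduces to \eqref{eq:Pn_equal_POn}.

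To prove $P_k \subseteq P(\ValGraph[k])$, I would take $\vec v \in P_k$ witnessed by a sequence $(\sigma^m)$ with $|\sigma^m|\to\infty$ and $\pcoc(\pi,\sigma^m)\to \vec v_\pi$ for all $\pi\in\SS_k$. For each $m$ with $|\sigma^m|\geq k$, consider the walk $w^m = W_k(\sigma^m)$ in $\ValGraph[k]$; by Lemma~\ref{lemma:pathocc}, the normalized edge-count vector $\bigl(\tfrac{n_\pi(w^m)}{|w^m|}\bigr)_{\pi\in\SS_k}$ equals $\bigl(\tfrac{\coc(\pi,\sigma^m)}{|\sigma^m|-k+1}\bigr)_{\pi}$, which converges to $\vec v$ as $m\to\infty$ (the denominators $|\sigma^m|$ and $|\sigma^m|-k+1$ differ by a bounded amount). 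Each such normalized edge-count vector of a walk lies in $P(\ValGraph[k])$: a walk can be decomposed (by repeatedly extracting simple cycles) into simple cycles plus a bounded-length remainder path of length $\leq |V| = (k-1)!$, so its normalized edge-count vector is a convex combination of the $\vec e_{\sc}$ plus an error term of size $O(1/|w^m|)\to 0$. Since $P(\ValGraph[k])$ is closed (being a polytope), the limit $\vec v$ lies in $P(\ValGraph[k])$. I should be a little careful to note that a generic walk need not be a closed walk, so the cycle-extraction leaves a genuine leftover path rather than a union of cycles; this is precisely why the error term appears and why closedness of the polytope is needed.

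For the reverse inclusion $P(\ValGraph[k]) \subseteq P_k$, since $P_k$ is convex (Proposition~\ref{prop:P_n_is_covex}) and closed (Lemma~\ref{lem:P_n_is_closed}), it suffices to show every vertex $\vec e_{\sc}$ of $P(\ValGraph[k])$ lies in $P_k$. Given a simple cycle $\sc = (e_1,\dots,e_\ell)$ in $\ValGraph[k]$, I would produce, for each $N$, a walk $w_N$ that loops around $\sc$ $N$ times — this is a genuine walk since $\ar(e_\ell)=\st(e_1)$ — and use Lemma~\ref{lemma:pathperm} to lift $w_N$ to a permutation $\sigma^N$ with $W_k(\sigma^N)=w_N$, hence $|\sigma^N| = N\ell + k - 1 \to \infty$. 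By Lemma~\ref{lemma:pathocc}, $\coc(\pi,\sigma^N) = N\cdot n_\pi(\sc)$, so $\pcoc(\pi,\sigma^N) = \tfrac{N n_\pi(\sc)}{N\ell+k-1} \to \tfrac{n_\pi(\sc)}{\ell} = (\vec e_{\sc})_\pi$ for every $\pi\in\SS_k$. Thus $\vec e_{\sc}\in P_k$, and taking the closed convex hull gives $P(\ValGraph[k])\subseteq P_k$. Combining the two inclusions yields \eqref{eq:Pn_equal_POn}, and then the stated consequences follow from the cycle-polytope results of Section~\ref{sec:Cycle_pol_dim} as described above.

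The main obstacle is the first inclusion: making the ``walk is approximately a convex combination of simple cycles'' argument precise. One must formalize the decomposition of an arbitrary (not necessarily closed) walk into simple cycles plus a short path, bound the remainder uniformly (length $\leq (k-1)!$, independent of $m$), and thereby show the normalized edge-count vector of $w^m$ is within $O(1/|w^m|)$ in $\ell^\infty$ of a point of $P(\ValGraph[k])$; then invoke closedness. Everything else — the reverse inclusion, the translation of Propositions~\ref{prop:vertices}, \ref{thm:dimension}, \ref{thm:faces} into statements about $\ValGraph[k]$, and checking $\ValGraph[k]$ has $k!$ edges and $(k-1)!$ vertices and is strongly connected — is routine given the results already proved.
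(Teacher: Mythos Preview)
Your proposal is correct and follows essentially the same approach as the paper: both directions of the inclusion are handled exactly as you describe, with the walk decomposition into simple cycles plus a tail of length at most $(k-1)!$ (this is Lemma~\ref{lm:decomp} in the paper) giving $P_k\subseteq P(\ValGraph[k])$, and the construction of permutations looping $N$ times around a simple cycle (via Lemma~\ref{lemma:pathperm}) together with convexity of $P_k$ giving the reverse inclusion. The derivation of the dimension, vertices and equations from the general cycle-polytope results is also identical; the only superfluous step is your appeal to closedness of $P_k$ in the inclusion $P(\ValGraph[k])\subseteq P_k$, where convexity alone suffices since $P(\ValGraph[k])$ is the convex hull of finitely many points.
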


Before presenting the proof of this theorem, we first give a lemma on directed graphs.

\begin{lemma}
\label{lm:decomp}
	Let $G$ be a directed graph and $w$ a walk on it. Then the multiset of edges of $w$ can be decomposed into $\ell$ simple cycles (for some $\ell\geq 0$) $\mathcal{C}_1, \dots ,  \mathcal{C}_\ell$ and a tail $\mathcal{T}$ that does not repeat vertices (but is possibly empty).
	Specifically, we have the following relation of multisets of edges of $G$:
	$$w = \mathcal{C}_1 \uplus \dots \uplus \mathcal{C}_\ell \uplus \mathcal{T} \, . $$
\end{lemma}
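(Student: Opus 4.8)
The plan is to induct on the length $|w|$ of the walk, peeling off one simple cycle at a time whenever the walk revisits a vertex. First I would set up the statement carefully: write $w = (e_1, \dots, e_s)$ with associated vertex sequence $v_0, v_1, \dots, v_s$ where $v_0 = \st(e_1)$, $v_i = \ar(e_i) = \st(e_{i+1})$. If all the vertices $v_0, \dots, v_s$ are distinct (or the only coincidence is $v_0 = v_s$), then $w$ is already a path, so it is either a single tail $\mathcal{T} = w$ (with $\ell = 0$) or, if $v_0 = v_s$, a single simple cycle (with $\ell = 1$ and $\mathcal{T}$ empty). This is the base case.

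For the inductive step, suppose some vertex is visited at least twice in the interior, i.e.\ there exist indices $0 \le i < j \le s$ with $v_i = v_j$ and $(i,j) \neq (0,s)$, and choose such a pair with $j - i$ minimal. By minimality, the sub-walk $\mathcal{C} \coloneqq (e_{i+1}, \dots, e_j)$ visits the vertices $v_i, v_{i+1}, \dots, v_{j-1}$ without repetition and returns to $v_i = v_j$, so $\mathcal{C}$ is a simple cycle. Removing this block from $w$ yields the shorter walk $w' \coloneqq (e_1, \dots, e_i, e_{j+1}, \dots, e_s)$ — this is still a walk, since $\ar(e_i) = v_i = v_j = \st(e_{j+1})$ — with $|w'| = |w| - |\mathcal{C}| < |w|$. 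Applying the induction hypothesis to $w'$ gives a decomposition $w' = \mathcal{C}_1 \uplus \dots \uplus \mathcal{C}_{\ell-1} \uplus \mathcal{T}$, and appending $\mathcal{C}$ gives the desired decomposition of $w$ as a multiset of edges, since $w = w' \uplus \mathcal{C}$ as multisets.

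The main subtlety — which I would be careful to spell out — is the claim that $\mathcal{C}$ really is a \emph{simple} cycle in the paper's sense (a cycle that is a path, i.e.\ with distinct edges and distinct vertices apart from the closing coincidence): this is exactly what the minimality of $j - i$ buys, since any interior repetition inside $\mathcal{C}$ would contradict the choice of $(i,j)$. I would also note that the decomposition is a statement about multisets of edges only, so there is no need to worry about the cyclic order or basepoint of the simple cycles, nor about how they interleave in the original walk; in particular the tail $\mathcal{T}$ is the walk $w$ with all the peeled cycles deleted, and it repeats no vertex precisely because the recursion terminates only when no interior repetition remains. I do not expect any real obstacle here — the argument is a routine greedy cycle-peeling induction — but the one place to be precise is verifying that $w' = (e_1, \dots, e_i, e_{j+1}, \dots, e_s)$ is genuinely a walk, which follows from $v_i = v_j$.
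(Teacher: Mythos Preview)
Your proof is correct and follows essentially the same cycle-peeling induction as the paper: both argue by induction on the length of the walk, extract a simple cycle at a repeated vertex, and recurse on the shorter walk obtained by deleting it. The only cosmetic difference is that the paper picks the cycle at the \emph{first} repeated vertex whereas you pick a pair $(i,j)$ with $j-i$ minimal, but both choices guarantee simplicity of the extracted cycle and the rest of the argument is identical.
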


\begin{proof}
This decomposition is obtained inductively on the number of edges.
If $w$ has no repeated vertices, the decomposition $w = \mathcal{T} $ satisfies the desired conditions.
If $w$ has repeated vertices, it has a simple cycle corresponding to the first repetition of a vertex.
By pruning from the walk this simple cycle, we obtain a smaller walk which decomposes by the induction hypothesis.
This gives us the desired result.
\end{proof}

\begin{remark}
The decomposition obtained above, of a walk $w$ into cycles $\mathcal{C}_1, \dots ,  \mathcal{C}_\ell$ and a tail $\mathcal{T}$, is a decomposition of the edge multiset.
In particular, each of the cycles $\mathcal C_i $ or the tail $\mathcal T $ are \emph{not} necessarily formed by consecutive sequences of edges of $w$.
Explicit examples can be readily found.
\end{remark}

\begin{proof}[Proof of Theorem \ref{thm:vertices}]

The first step is to show that, for any simple cycle $\sc $ of $\ValGraph[k]$,  the vector $\vec{e}_{\sc }$ is in $ P_k$. This, together with Proposition \ref{prop:P_n_is_covex} implies that $ P(\ValGraph[k]) \subseteq P_k $.

According to Lemma \ref{lemma:pathperm}, for every $m\in\NN$, there is a permutation $\sigma^m$ such that $W_k(\sigma^m )$ is the walk resulting from the concatenation of $m$ copies of $\sc$.
We claim that $\pcoc_k(\sigma^m) \to \vec{e}_{\sc }$ and $|\sigma^m | \to \infty$. The latter affirmation is trivial since, by Lemma \ref{lemma:pathperm}, $|\sigma^m |= |\sc|m+k-1$.
For the first claim, according to Lemma \ref{lemma:pathocc}, we have that $\coc(\pi, \sigma^m) = m$ for any $\pi $ that is the label of an edge in the simple cycle $\sc $, and 
$\coc(\pi, \sigma^m) = 0$ otherwise.
Hence 
$$\pcoc_k (\sigma^m)  = \vec{e}_{\sc } \frac{m |\sc |}{|\sigma^m|} \to \vec{e}_{\sc } \, , $$ 
as desired.
\medskip

On the other hand, suppose that $\vec{v} \in P_k$, so we have a sequence $\sigma^m$ of permutations such that $|\sigma^m| \to \infty $ and $ \pcoc_k(\sigma^m) \to \vec{v} $.
We will show that $\dist (\pcoc_k(\sigma^m), P(\ValGraph[k]))\to 0$.
It is then immediate, since $P(\ValGraph[k])$ is closed, that $\vec{v} \in P(\ValGraph[k])$, proving that $P_k = P(\ValGraph[k])$.
We consider the walk $w^m=W_k(\sigma^m)$. Using  Lemma \ref{lm:decomp}, the edge multiset of the walk $w^m$ can be decomposed into simple cycles and a tail (that does not repeat vertices and may be empty) as follows
$$w^m= \mathcal{C}^m_1 \uplus  \dots \uplus  \mathcal{C}^m_\ell \uplus  \mathcal{T}^m.$$

Then from Lemma \ref{lemma:pathocc} we can compute $\pcoc_k (\sigma^m)$ as a convex combination of $\vec{e}_{\sc }$ for some simple cycles $\sc$, plus a small error term. Specifically,
\begin{equation*}
\pcoc_k(\sigma^m)=\vec{e}_{\mathcal{C}^m_1}\tfrac{|\mathcal{C}^m_1|}{|\sigma^m|}+\dots+\vec{e}_{\mathcal{C}^m_\ell}\tfrac{|\mathcal{C}^m_\ell|}{|\sigma^m|}+\vec{Er}^m,
\end{equation*}
where $|\vec{Er}^m| \leq\frac{(k-1)!}{|\sigma^m|}$ since there are $(k-1)!$ distinct vertices in $\ValGraph[k]$ and the path $\mathcal{T}^m$ does not contain repeated vertices. In particular, $|\vec{Er}^m|\to 0$ since $k$ is constant and $|\sigma^m|\to\infty$. 

Noting that
$$\pcoc_k(\sigma^m)=\vec{Er}^m+\frac{\sum_i|\mathcal{C}^m_i|}{|\sigma^m|}\vec{w}_m,$$ 
where $\vec{w}_m=\frac{1}{\sum_i|\mathcal{C}^m_i|}\left(\vec{e}_{\mathcal{C}^m_1}|\mathcal{C}^m_1|+\dots+\vec{e}_{\mathcal{C}^m_\ell}|\mathcal{C}^m_\ell|\right)\in P(\ValGraph[k])$, we can conclude that

\begin{equation*}
\dist\left(\pcoc_k(\sigma^m),P(\ValGraph[k])\right) \leq \dist\left(\vec{Er}^m+\frac{\sum_i|\mathcal{C}^m_i|}{|\sigma^m|}\vec{w}_m, \vec{w}_m \right)\to 0 \, ,
\end{equation*}
since $|\vec{Er}^m|\to 0$, $\frac{\sum_i|\mathcal{C}^m_i|}{|\sigma^m|}\to 1$ and $\vec{w}_m$ is uniformly bounded.
This concludes the proof of \cref{eq:Pn_equal_POn}. 

The characterization of the vertices is a trivial consequence of Proposition \ref{prop:vertices}.
For the dimension, it is enough to note that $\ValGraph[k]$ is strongly connected from Observation \ref{obs:gluepermutations}.
So by Proposition \ref{thm:dimension} it has dimension $| E(\ValGraph[k])| - | V( \ValGraph[k])| = k! - (k-1)!$, as desired.
Finally, the equations for $P_k$ are determined using Proposition \ref{thm:faces} and the definition of the overlap graph.
\end{proof}

\begin{remark}\label{rem:simple cycles}
	Since for two different simple cycles $\sc_1, \sc_2$ we have that $ \vec{e}_{\sc_1} \neq \vec{e}_{\sc_2}$, enumerating the vertices corresponds to enumerating simple cycles of $\ValGraph[k]$ (which seems to be a difficult problem).
This problem was partially investigated in \cite{asplund2018enumerating}.
There, all the cycles of size one and two are enumerated.
\end{remark}

\section{Mixing classical patterns and consecutive patterns}\label{sec:class_and_cons}

In \cref{sect:mixing} we explained that a natural question is to describe the feasible region when we mix classical and consecutive patterns.

More generally, let $\mathcal{A},\mathcal{B}\subseteq\SS$ be two finite sets of permutations. We consider the following sets of points
\begin{equation}\label{eq:ABdef}
\begin{split}
	&A = \left\{\vec{v}\in [0,1]^{\mathcal{A}} | \exists\; (\sigma^m)_{m\in\NN}\in{\SS}^{\NN}\text{ s.t. }|\sigma^m| \to \infty\text{ and } \left(\pcoc(\pi, \sigma^m )\right)_{\pi\in\mathcal{A}} \to \vec{v}   \right\},\\
	&B = \left\{\vec{v}\in [0,1]^{\mathcal{B}} | \exists\; (\sigma^m)_{m\in\NN}\in{\SS}^{\NN}\text{ s.t. }|\sigma^m| \to \infty\text{ and } \left(\poc(\pi, \sigma^m )\right)_{\pi\in\mathcal{B}} \to \vec{v}   \right\}.
\end{split}
\end{equation}
We want to investigate the set
\begin{equation}\label{eq:Cdef}
\begin{split}
C=\Big\{\vec{v}\in [0,1]^{\mathcal{A}\sqcup\mathcal{B}} \Big|& \exists\; (\sigma^m)_{m\in\NN}\in{\SS}^{\NN}\text{ s.t. }|\sigma^m| \to \infty,\\
 &\left(\pcoc(\pi, \sigma^m )\right)_{\pi\in\mathcal{A}} \to (\vec{v})_{\mathcal{A}} \text{ and } \left(\poc(\pi, \sigma^m )\right)_{\pi\in\mathcal{B}} \to (\vec{v})_{\mathcal{B}}  \Big\}.
\end{split}
\end{equation}

For the statement of the next theorem we need to recall the definition of the \emph{substitution operation} on permutations.
For $\theta, \nu^{(1)}, \ldots, \nu^{(d)}$ permutations such that $d = |\theta |$, the substitution $\theta[\nu^{(1)},...,\nu^{(d)}]$ is defined as follows: for each $i$, we replace the point $(i,\theta(i))$ in the diagram of $\theta$ with the diagram of $\nu^{(i)}$.
Then, rescaling the rows and columns yields the diagram of a larger permutation $\theta[\nu^{(1)},...,\nu^{(d)}]$.
Note that $|\theta[\nu^{(1)},...,\nu^{(d)}]| = \sum_{i=1}^d |\nu^{(i)}|$ (see \cref{fig:Subs} for an example).
\begin{figure}[ht]
	\includegraphics[height=2.5cm]{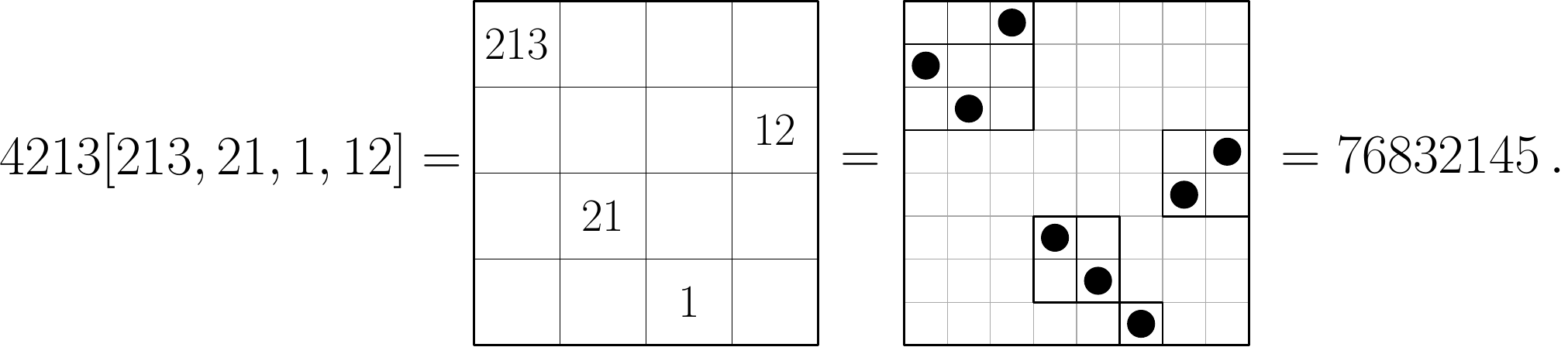}
	\caption{Example of substitution of permutations.}
	\label{fig:Subs}
\end{figure}

\begin{theorem} \label{thm:ABC}
	Let $\mathcal{A}, \mathcal{B}\subseteq \mathcal{S}$ be finite sets of permutations, and $A,B,C$ be defined as in \cref{eq:ABdef,eq:Cdef}. It holds that
	\begin{equation}
	\label{eq:ABC}
			A\times B = C \, .
	\end{equation}
	Specifically, given two points $\vec{v}_{\mathcal{A}} \in A, \vec{v}_{\mathcal{B}} \in B$, consider two sequences
	$(\sigma^m_A)_{m\in\NN}\in{\SS}^{\NN}$ and $(\sigma^m_B)_{m\in\NN}\in{\SS}^{\NN}$ such that 
	\begin{equation}\label{eq:assump}
	\begin{split}
	&|\sigma^m_A| \to \infty\text{ and } \left(\pcoc(\pi, \sigma^m_A)\right)_{\pi\in\mathcal{A}} \to \vec{v}_A,\\
	&|\sigma^m_B| \to \infty\text{ and } \left(\poc(\pi, \sigma^m_B )\right)_{\pi\in\mathcal{B}} \to \vec{v}_B,
	\end{split}
	\end{equation}
	then the sequence $(\sigma^m_C)_{m\in\NN}$ defined by
	\begin{equation}\label{eq:assump2}
	 \sigma^m_C\coloneqq\sigma^m_B[\sigma^m_A,\dots,\sigma^m_A],\quad\text{for all}\quad m\in\NN,
	\end{equation}
	satisfies
\begin{equation}\label{eq:goaloftheproof}
|\sigma^m_C|\to \infty, \quad\left(\pcoc(\pi, \sigma^m_C)\right)_{\pi\in\mathcal{A}} \to \vec{v}_A \quad\text{and}\quad \left(\poc(\pi, \sigma^m_C )\right)_{\pi\in\mathcal{B}} \to \vec{v}_B.
\end{equation}

\end{theorem}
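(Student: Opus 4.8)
The plan is to prove the three conclusions in \cref{eq:goaloftheproof} separately, the first being trivial and the other two following from careful counting arguments based on the structure of the substitution $\sigma^m_C=\sigma^m_B[\sigma^m_A,\dots,\sigma^m_A]$. Write $a_m\coloneqq|\sigma^m_A|$ and $b_m\coloneqq|\sigma^m_B|$, so that $|\sigma^m_C|=a_m b_m\to\infty$, settling the first claim. The intuition is that consecutive patterns of $\sigma^m_C$ ``see only $\sigma^m_A$'': since $\sigma^m_B$ is substituted with many copies of the \emph{same} large permutation $\sigma^m_A$, a short consecutive interval of $\sigma^m_C$ lies entirely inside one copy of $\sigma^m_A$ unless it straddles a boundary between two consecutive copies, and there are only $b_m-1$ such boundaries — a negligible fraction. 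Dually, classical patterns of $\sigma^m_C$ ``see only $\sigma^m_B$'': a generic choice of $|\pi|$ points of $\sigma^m_C$ picks at most one point from each copy of $\sigma^m_A$, hence induces the same pattern as the corresponding choice of copies in $\sigma^m_B$; the exceptional event of two points landing in the same copy again contributes a lower-order term.

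\textbf{Consecutive patterns.}
Fix $\pi\in\mathcal A$ with $k\coloneqq|\pi|$. An interval $I\subseteq[a_m b_m]$ of size $k$ either is contained in a single copy of $\sigma^m_A$ — in which case $\pat_I(\sigma^m_C)=\pat_{I'}(\sigma^m_A)$ for the corresponding interval $I'$ of $[a_m]$ — or it meets at least two consecutive copies. The number of intervals of the second kind is at most $(b_m-1)(k-1)$. Summing the first kind over the $b_m$ copies gives $b_m\cdot\coc(\pi,\sigma^m_A)$ up to a boundary correction of size at most $b_m(k-1)$ coming from intervals of $[a_m]$ that I am over- or under-counting at the two ends of each copy. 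Altogether
\begin{equation*}
\bigl|\coc(\pi,\sigma^m_C)-b_m\coc(\pi,\sigma^m_A)\bigr|\leq C_k\, b_m,
\end{equation*}
for a constant $C_k$ depending only on $k$. Dividing by $|\sigma^m_C|=a_m b_m$ and using $\pcoc(\pi,\sigma^m_A)=\coc(\pi,\sigma^m_A)/a_m$ gives
\begin{equation*}
\Bigl|\pcoc(\pi,\sigma^m_C)-\pcoc(\pi,\sigma^m_A)\Bigr|\leq \frac{C_k}{a_m}\to 0,
\end{equation*}
so $\bigl(\pcoc(\pi,\sigma^m_C)\bigr)_{\pi\in\mathcal A}\to\vec v_A$ by \cref{eq:assump}. (This is exactly the boundary estimate already used in the proof of Proposition \ref{prop:P_n_is_covex}.)

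\textbf{Classical patterns.}
Fix $\pi\in\mathcal B$ with $k\coloneqq|\pi|$. Group the $a_m b_m$ columns of $\sigma^m_C$ into $b_m$ blocks of size $a_m$, one per copy of $\sigma^m_A$. If $I\subseteq[a_m b_m]$ with $|I|=k$ selects points from $k$ \emph{distinct} blocks, say blocks $j_1<\dots<j_k$, then because the blocks of $\sigma^m_C$ are placed according to the diagram of $\sigma^m_B$, the relative order of the selected points depends only on the relative order of $\sigma^m_B(j_1),\dots,\sigma^m_B(j_k)$; hence $\pat_I(\sigma^m_C)=\pat_{\{j_1,\dots,j_k\}}(\sigma^m_B)$. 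Conversely every occurrence of $\pi$ in $\sigma^m_B$ on indices $\{j_1,\dots,j_k\}$ lifts to exactly $a_m^{\,k}$ occurrences of $\pi$ in $\sigma^m_C$ using distinct blocks (choose one of $a_m$ points in each block — this uses that $\sigma^m_A$ is substituted, so each block is an \emph{interval}-order-isomorphic copy and the within-block choice is free). Therefore
\begin{equation*}
\oc(\pi,\sigma^m_C)=a_m^{\,k}\,\oc(\pi,\sigma^m_B)+R_m,
\end{equation*}
where $R_m$ counts occurrences using $k$ points that fall into at most $k-1$ distinct blocks. Such a choice fixes at least one block containing $\geq 2$ of the points, so
\begin{equation*}
0\leq R_m\leq \binom{k}{2}\, b_m^{\,k-1} a_m^{\,k}
\end{equation*}
(choose the ``collision'' pair of point-positions, then at most $b_m^{k-1}$ ways to place the remaining block-indices and at most $a_m^k$ ways to place points within blocks). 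Dividing by $\binom{a_m b_m}{k}$ and using $\poc(\pi,\sigma^m_B)=\oc(\pi,\sigma^m_B)/\binom{b_m}{k}$ together with $\binom{a_m b_m}{k}\sim (a_m b_m)^k/k!$ and $\binom{b_m}{k}\sim b_m^k/k!$ yields
\begin{equation*}
\poc(\pi,\sigma^m_C)=\poc(\pi,\sigma^m_B)\cdot\frac{a_m^{\,k}\binom{b_m}{k}}{\binom{a_m b_m}{k}}+O\!\left(\tfrac{1}{b_m}\right)\longrightarrow \vec v_B(\pi),
\end{equation*}
since the displayed ratio tends to $1$ as $a_m,b_m\to\infty$ and $\poc(\pi,\sigma^m_B)\to\vec v_B(\pi)$ by \cref{eq:assump}. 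This proves $\bigl(\poc(\pi,\sigma^m_C)\bigr)_{\pi\in\mathcal B}\to\vec v_B$.

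\textbf{Conclusion.}
Combining the three displays establishes \cref{eq:goaloftheproof}, which shows $\vec v_A\times\vec v_B\in C$ for every $\vec v_A\in A,\vec v_B\in B$, i.e.\ $A\times B\subseteq C$. The reverse inclusion $C\subseteq A\times B$ is immediate from the definitions: if $(\sigma^m)$ witnesses $\vec v\in C$, then the same sequence witnesses $(\vec v)_{\mathcal A}\in A$ and $(\vec v)_{\mathcal B}\in B$. Hence $A\times B=C$, proving \cref{eq:ABC}.

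\textbf{Main obstacle.}
The only delicate point is bookkeeping the error terms in the classical-pattern count: one must check that occurrences of $\pi$ in $\sigma^m_C$ using points in $<k$ distinct blocks really are $O(b_m^{k-1}a_m^k)$, i.e.\ lower order after dividing by $\binom{a_m b_m}{k}\asymp (a_m b_m)^k$, and that the ``clean'' occurrences (one point per block, $k$ distinct blocks) are in bijection with (occurrence of $\pi$ in $\sigma^m_B$) $\times$ (free within-block choices). Both facts hinge on the structural property of substitution — that each substituted block is a full order-isomorphic copy of $\sigma^m_A$ laid out in a window of consecutive rows and columns — so that the within-block and between-block orders factor cleanly; this is what makes the consecutive statistics depend only on $\sigma^m_A$ and the classical statistics depend only on $\sigma^m_B$, up to vanishing corrections.
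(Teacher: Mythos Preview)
Your proof is correct and follows essentially the same approach as the paper: the consecutive-pattern estimate is identical, and for classical patterns both arguments split occurrences according to whether the $k$ chosen indices land in $k$ distinct copies of $\sigma^m_A$ or suffer a collision. The only cosmetic difference is that the paper packages the classical-pattern step probabilistically---writing $\poc(\pi,\sigma^m_C)=\mathbb{P}(\pat_{\bm I}(\sigma^m_C)=\pi)$ and conditioning on the collision event---which yields the exact identity $\mathbb{P}(\pat_{\bm I}=\pi\mid\text{no collision})=\poc(\pi,\sigma^m_B)$ and so avoids your separate verification that $a_m^{\,k}\binom{b_m}{k}\big/\binom{a_mb_m}{k}\to1$.
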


\begin{proof}
Let $(\sigma^m_C)_{m\in\NN}$ be defined as in \cref{eq:assump2}.
The fact that the size of $\sigma^m_C$ tends to infinity follows from $|\sigma^m_C|=|\sigma^m_A||\sigma^m_B| \to \infty $.
For the second limit in \cref{eq:goaloftheproof}, note that for every pattern $\pi\in\mathcal{A},$
$$\pcoc(\pi, \sigma^m_C)=\frac{\coc(\pi, \sigma^m_C)}{|\sigma^m_B|\cdot|\sigma^m_A|}=\frac{\coc(\pi,\sigma^m_A)\cdot|\sigma^m_B|+Er}{|\sigma^m_B|\cdot|\sigma^m_A|} = \pcoc(\pi,\sigma^m_A) + \frac{Er}{|\sigma^m_B|\cdot|\sigma^m_A|} ,$$
where $Er\leq|\sigma^m_B|\cdot |\pi|$. 
This error term comes from intervals of $[|\sigma^m_C |]$ that intersect more than one copy of $\sigma^m_A$. Since $|\pi|$ is fixed and $|\sigma^m_A| \to \infty$ we can conclude the desired limit, using the assumption in \cref{eq:assump} that $\left(\pcoc(\pi, \sigma^m_A)\right)_{\pi\in\mathcal{A}} \to \vec{v}_A$.

Finally, for the third limit in \cref{eq:goaloftheproof} we note that setting $n=|\sigma^m_C|$ and $k=|\pi|$,
\begin{equation}
\label{eq:step_1}
	\poc(\pi,\sigma^m_C) = \frac{\occ(\pi,\sigma^m_C)}{\binom{n}{k}} \, 
	=\mathbb{P} \left(\pat_{\bm I}(\sigma^m_C)=\pi \right),
\end{equation}
where $\bm I$ is a random set, uniformly chosen among the $\binom{n}{k}$ subsets of $[n]$ with $k$ elements (we denote random quantities in \textbf{bold}). Let now $E^m$ be the event that the random set $\bm I$ contains two indices $\bm i,\bm j$ of $[|\sigma^m_C|]$ that belong to the same copy of $\sigma^m_A$ in $\sigma^m_C$. Denote by $(E^m)^C$ the complement of the event $E^m$. We have
\begin{multline}
\label{eq:step_2}
	\mathbb{P} \left(\pat_{\bm I}(\sigma^m_C)=\pi \right)=\\
	\mathbb{P} \left(\pat_{\bm I}(\sigma^m_C)=\pi |E^m\right)\cdot\mathbb{P}\left(E^m\right)+\mathbb{P} \left(\pat_{\bm I}(\sigma^m_C)=\pi |(E^m)^C\right)\cdot\mathbb{P}\left((E^m)^C\right).
\end{multline}
We claim that
\begin{equation}
\label{eq:step_3}
	\mathbb{P}\left(E^m\right)\leq\binom{k}{2}\frac{1}{|\sigma^m_B|}\to0.
\end{equation}
Indeed, the factor $\binom{k}{2}$ counts the number of pairs $i,j$ in a set of cardinality $k$ and the factor $\frac{1}{|\sigma^m_B|}$ is an upper bound for the probability that given a uniform two-element set $\left\{\bm i,\bm j\right\}$ then $\bm i, \bm j$ belong to the same copy of $\sigma^m_A$ in $\sigma^m_C$ (recall that there are $|\sigma^m_B|$ copies of $\sigma^m_A$ in $\sigma^m_C$). Note also that 
\begin{equation}
\label{eq:step_4}
\mathbb{P} \left(\pat_{\bm I}(\sigma^m_C)=\pi |(E^m)^C\right)=\poc(\pi,\sigma^m_B)\to \vec{v}_B,
\end{equation}
where the last limit comes from \cref{eq:assump}. Using \cref{eq:step_1,eq:step_2,eq:step_3,eq:step_4}, we obtain that $$\left(\poc(\pi, \sigma^m_C )\right)_{\pi\in\mathcal{B}} \to \vec{v}_B.$$ This concludes the proof of \cref{eq:goaloftheproof}.
The result in \cref{eq:ABC} follows from the fact that we trivially have $  C \subseteq A \times B$, and for the other inclusion we use the construction above, which proves that $(\vec{v}_A,\vec{v}_B)\in C,$ for every $\vec{v}_{\mathcal{A}} \in A, \vec{v}_{\mathcal{B}} \in B$.
\end{proof}

\longthanks{The authors are very grateful to Mathilde Bouvel and Valentin F\'eray for various discussions during the preparation of the paper. They also thank the anonymous referees for all their precious and useful comments.}

\bibliographystyle{amsplain-ac}
\bibliography{bibli}

\end{document}